\newtheorem{theorem}{Theorem}[section]
\newtheorem{lemma}[theorem]{Lemma}
\newtheorem{proposition}[theorem]{Proposition}
\newtheorem{corollary}[theorem]{Corollary}
\theoremstyle{definition}
\newtheorem{definition}[theorem]{Definition}
\theoremstyle{remark}
\newtheorem{remark}[theorem]{Remark}
\newtheorem{example}[theorem]{Example}
\newcommand{\B}{\mathbb{B}}
\newcommand{\G}{\mathcal{G}}
\newcommand{\F}{\mathbb{F}}
\newcommand{\R}{\mathbb{R}}
\newcommand{\Z}{\mathbb{Z}}
\newcommand{\wh}{\widehat}
\newcommand{\bigast}{\mathop{\mathlarger{\mathlarger{\mathlarger{*}}}}}
\newcommand{\curlyG}{\mathcal{G}}
\newcommand{\curlyH}{\mathcal{H}}
\title{Pro-$p$ subgroups of profinite completions of 3-manifold groups}
\author{Henry Wilton\footnote{Department of Pure Mathematics and Mathematical Statistics, Centre for Mathematical Sciences, Wilberforce Road, Cambridge, CB3 0WB, United Kingdom}~~and Pavel Zalesskii\footnote{Department of Mathematics, University of Bras\'ilia, 70910-9000 Bras\'ilia, Brazil}}
\begin{document}

\maketitle

\begin{abstract}
We completely describe the  finitely generated pro-$p$ subgroups of the profinite completion of the fundamental group of an arbitrary $3$-manifold. We also prove a pro-$p$ analogue of the main theorem of Bass--Serre theory for finitely generated pro-$p$ groups.
\end{abstract}

\section{Introduction}

In recent years there has been a great deal of interest in detecting properties of the fundamental group $\pi_1M$ of a $3$-manifold via its finite quotients, or more conceptually by its profinite completion.  This motivates the study of the profinite completion $\widehat {\pi_1M}$ of the  fundamental group of a $3$-manifold.  As in the case of finite groups, there is a Sylow theory for profinite groups, and pro-$p$ subgroups play the same central role in profinite group theory that $p$-subgroups play in the theory of finite groups.  Note that pro-$p$ subgroups of $\widehat {\pi_1M}$ are infinitely generated in general, so it is natural to begin the study from the finitely generated case. Amazingly enough,  it is possible to give a complete description of the  finitely generated pro-$p$ subgroups of the profinite completions of all 3-manifold groups, and this is one of the objectives of this paper (see Theorem \ref{main} below).

The work of this paper was partially motivated by \cite[Theorem F]{wilton_distinguishing_2017}, where it was proved  that if $M$ is hyperbolic then any finitely generated pro-$p$ subgroup of $\widehat{\pi_1M}$ is free.   In fact the proof of Theorem \ref{main} follows the same strategy as in \cite{wilton_distinguishing_2017}.   By applying first the (profinite) Kneser--Milnor decomposition and then the (profinite) JSJ decomposition, the theorem is reduced to the cases of Seifert-fibred and hyperbolic manifolds, possibly with cusps.  The main difficulty at this point is provided by hyperbolic manifolds with cusps.  In the closed case the result was proved in \cite{wilton_distinguishing_2017}.  In this case  (after passing to a finite-sheeted cover) there is a suitable hierarchy in which the corresponding actions on profinite trees are relatively profinitely acylindrical. However, we have to deal with cusps.

At this point we make heavy use of the profinite analogue of Bass--Serre theory for groups acting on trees. Note that this theory does not have the full strength of its classical original. The main theorem of Bass--Serre theory asserts that a group $G$ acting on a  tree $T$ is the fundamental group of a  graph of  groups $(\curlyG, T/G, D)$, where $D$ is a maximal subtree of $T/G$ and $\curlyG$ consists of edge and vertex stabilizers of a connected transversal of $T/G$. This does not hold in the profinite case: firstly $D$ does not always exist and secondly even if it exists it might not lift to $T$.

In this paper we prove a pro-$p$ analogue of the main theorem  of Bass--Serre theory for finitely generated pro-$p$ groups, that stands independently as a  contribution to the combinatorial theory of  pro-$p$ groups.

\begin{theorem} \label{intro:pro-pbass-serre}
Let $G$ be a finitely generated pro-$p$ group acting on a pro-$p$ tree $T$. Then $G$ is the fundamental pro-$p$ group of a profinite graph of
pro-$p$ groups $(\G, \Gamma)$.  Moreover, the vertex and edge groups of $(\G, \Gamma)$ are  stabilizers of certain vertices and edges of $T$ respectively, and stabilizers of vertices and edges of $T$ in $G$ are conjugate to subgroups of vertex and edge groups of $(\G, \Gamma)$ respectively.
\end{theorem}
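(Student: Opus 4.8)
The plan is to realise $G$ as the fundamental pro-$p$ group of the profinite graph of pro-$p$ groups carried by the quotient graph $\Gamma=G\backslash T$, with a suitable family of vertex and edge stabilizers as the vertex and edge groups, and to get around the failure of the classical lifting lemma by an approximation argument over the finite quotients of $G$. First one disposes of the degenerate case: if $G$ fixes a vertex $v$ of $T$, then $G=G_v$ is itself a vertex stabilizer and one takes $(\G,\Gamma)$ to be the one-vertex graph with group $G$; otherwise $T$ contains a unique minimal $G$-invariant pro-$p$ subtree, and after replacing $T$ by it we may assume the action has no global fixed point. The quotient $\Gamma:=G\backslash T$ is then a connected profinite graph, and the goal becomes to produce $(\G,\Gamma)$ with each vertex or edge group a stabilizer of a vertex or edge of $T$ lying over the corresponding point of $\Gamma$, together with an isomorphism $G\cong\Pi_1(\G,\Gamma)$.

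The construction proceeds by reduction to the case of a \emph{finite} quotient graph, which is already understood: when $G\backslash T$ is finite a maximal subtree exists and lifts, so the classical argument applies and gives $G$ as the fundamental pro-$p$ group of a finite graph of pro-$p$ groups with vertex and edge groups equal to stabilizers. To exploit this, I would write $\Gamma=\varprojlim_{i}\Gamma_i$ as an inverse limit of finite connected graphs and lift this approximation to an inverse system of pro-$p$ trees $T_i$ on which $G$ acts with $G\backslash T_i\cong\Gamma_i$ and $T=\varprojlim_iT_i$; at each layer one obtains a finite graph of pro-$p$ groups $(\G_i,\Gamma_i)$ whose vertex and edge groups are stabilizers of the action on $T_i$, and these fit into an inverse system. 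Setting $(\G,\Gamma):=\varprojlim_i(\G_i,\Gamma_i)$, each vertex or edge group of $\G$ is the inverse limit of the corresponding groups of the $\G_i$, hence a stabilizer of a vertex or edge of $T=\varprojlim_iT_i$. Using that the formation of the fundamental pro-$p$ group commutes with such inverse limits of finite graphs of pro-$p$ groups, together with the finite generation of $G$ — which forces the compatible family of layer-presentations to reassemble into $G$ itself rather than a proper quotient — one gets $G\cong\varprojlim_i\Pi_1(\G_i,\Gamma_i)\cong\Pi_1(\G,\Gamma)$.

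It then remains to match the vertex and edge groups of $(\G,\Gamma)$ with stabilizers of the $G$-action on $T$, and to verify the last clause that every vertex or edge stabilizer of $T$ is conjugate into one of them. For this I would bring in the standard pro-$p$ tree $S=S(\G,\Gamma)$: the isomorphism $G\cong\Pi_1(\G,\Gamma)$ yields a $G$-equivariant morphism $S\to T$ lying over $\Gamma\xrightarrow{\sim}G\backslash T$, and the usual analysis of morphisms of pro-$p$ trees — using connectedness of $\Gamma$ and the absence of a global fixed point — shows that this morphism carries stabilizers onto stabilizers and that each $G_v$, $G_e$ in $T$ is conjugate into the stabilizer of a preimage in $S$, i.e.\ into a vertex or edge group of $(\G,\Gamma)$.

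The main obstacle is the approximation step, namely constructing the inverse system $T_i$ of pro-$p$ trees with finite quotient graphs converging $G$-equivariantly to $T$, and arranging the layer graphs of groups so that they fit into an inverse system whose limit fundamental group is genuinely $G$. For an arbitrary profinite group, or an infinitely generated pro-$p$ group, $\Gamma$ may carry no maximal subtree, and even if it does it need not lift to $T$ — the failure of Bass--Serre theory recorded in the introduction — which is exactly what blocks a naive direct construction. Finite generation of $G$, together with the pro-$p$ hypothesis (through which a finite $p$-group acting on a pro-$p$ tree fixes a vertex), is what replaces the missing global maximal subtree by a coherent, bounded system of choices across the finite layers and guarantees convergence of the fundamental groups; making this precise is where essentially all of the work lies, the reductions of the first paragraph and the tree comparison of the third being routine once it is in place.
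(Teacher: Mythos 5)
Your overall strategy differs from the paper's, and it contains a gap that I do not see how to close. You take the underlying graph to be $\Gamma=G\backslash T$, write it as $\varprojlim_i\Gamma_i$ with $\Gamma_i$ finite, and then assert that this approximation can be ``lifted'' to an inverse system of pro-$p$ trees $T_i$ with $G\backslash T_i\cong\Gamma_i$ and $T=\varprojlim_i T_i$. This is the crux, and it is not justified: a finite quotient graph $\Gamma_i$ of $\Gamma$ is obtained by collapsing a partition of $\Gamma$ into clopen pieces, and to realise it as $G\backslash T_i$ for a $G$-tree $T_i$ you would need to collapse the $G$-invariant preimages of those pieces in $T$; but the quotient of a pro-$p$ tree by a $G$-invariant subgraph is again a pro-$p$ tree only when the connected components of that subgraph are subtrees (the Proposition on p.~486 of \cite{zalesskii_open_1992}), and there is no reason the preimages of arbitrary finite approximations of $\Gamma$ have this property. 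You acknowledge that ``essentially all of the work lies'' here, but you offer no mechanism for it, and note that the theorem as stated deliberately does \emph{not} claim $\Gamma=T/G$ --- the paper explicitly observes that its $(\G,\Gamma)$ is not $(\G,T/G)$, so your route aims at a strictly stronger conclusion than the one proved. A second, related gap is the coherence of the layer decompositions: even granting the $T_i$ and a decomposition at each layer, the decompositions are far from unique, and you give no argument that compatible choices can be made across all layers so that the limit of the fundamental groups is $G$.

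For comparison, the paper approximates the \emph{group} rather than the quotient graph: for each open normal $U\trianglelefteq G$ it forms $\tilde U$, the subgroup generated by the intersections of $U$ with vertex stabilizers, so that $G_U=G/\tilde U$ is virtually free pro-$p$ (acting on $T/\tilde U$); the Herfort--Zalesskii structure theorem then exhibits $G_U$ as $\Pi_1$ of a finite graph of finite $p$-groups, with no appeal to lifting subtrees of $T/G$. The coherence problem is solved by passing to \emph{reduced} decompositions, invoking \cite[Corollary 3.3]{weigel_virtually_2017} to make the set $\Omega_U$ of reduced decompositions of $G_U$ finite, and running the inverse limit over a linearly ordered basis of open normal subgroups so that $\varprojlim_U\Omega_U\neq\emptyset$; a point of this limit is the desired $(\G,\Gamma)$, with $G\cong\Pi_1(\G,\Gamma)$ by Proposition \ref{induces}. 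If you want to salvage your approach, you would need either a proof that finite quotient graphs of $T/G$ can be realised by $G$-equivariant tree quotients of $T$ (which I doubt in this generality), or to switch the direction of approximation as the paper does.
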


We observe that $(\G,\Gamma)$ in this theorem is not $(\G, T/G)$ as in classical Bass--Serre theory.  To construct the graph of pro-$p$ groups  $(\G, \Gamma)$ we use a decomposition of $G$ as an inverse limit $G=\varprojlim_U G_U$ of finitely generated virtually free pro-$p$ groups $G_U$, and  their splittings  as the fundamental groups of finite graphs of finite $p$-groups obtained in \cite{herfort_virtually_2013}.

We then apply this theorem to describe the finitely generated pro-$p$ subgroups of the profinite completion of the fundamental group  of a cusped hyperbolic 3-manifold.

\begin{theorem}
Let $\pi_1M$ be the fundamental group of an orientable hyperbolic 3-manifold with cusps and $H$ a finitely
generated pro-$p$ subgroup of $\widehat \pi_1M$. Then $H$  is a free
pro-$p$ product of free abelian pro-$p$ groups of rank $\leq
2$.\end{theorem}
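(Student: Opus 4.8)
The plan is to follow the strategy of the closed case in \cite{wilton_distinguishing_2017}, using Theorem~\ref{intro:pro-pbass-serre} to keep track of the peripheral structure. Replacing $M$ by a compact core with torus boundary, note that $\pi_1M$ is hyperbolic relative to its cusp subgroups and, by the work of Agol and Wise, virtually compact special; so one may pass to a finite-sheeted cover $M'$ for which $\pi_1M'$ admits a \emph{hierarchy}: a finite sequence of splittings obtained by repeatedly cutting $M'$ along properly embedded essential surfaces and terminating in handlebodies, in which every edge group is either the fundamental group of a cutting surface or a peripheral $\Z^2$, and such that the induced action of the relevant profinite completion on each profinite Bass--Serre tree is \emph{relatively profinitely acylindrical} --- acylindrical away from the conjugates of the peripheral subgroups. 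The passage from a finitely generated pro-$p$ subgroup $H\leq\widehat{\pi_1M}$ to $H\cap\widehat{\pi_1M'}$, which has $p$-power index in $H$, is harmless exactly as in \cite{wilton_distinguishing_2017}: the class of groups in the statement is closed under finitely generated closed subgroups (by the pro-$p$ Kurosh subgroup theorem) and under torsion-free extension by a finite $p$-group, and $\widehat{\pi_1M}$ is torsion-free. One then argues by induction on the length of the hierarchy, proving the conclusion for every finitely generated pro-$p$ subgroup of $\widehat{\pi_1N}$, where $N$ is any piece at any level, together with its peripheral tori. The base case is a handlebody: here $\widehat{\pi_1N}$ is free profinite, so its finitely generated pro-$p$ subgroups are free pro-$p$ (closed subgroups of free profinite groups are projective, and projective pro-$p$ groups are free pro-$p$), hence free pro-$p$ products of copies of $\Z_p$; and a peripheral subgroup $\widehat{\Z^2}\cong\widehat{\Z}^2$ has finitely generated pro-$p$ subgroups the closed subgroups of $\Z_p^2$, i.e.\ free abelian pro-$p$ groups of rank $\leq 2$.

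For the inductive step, let $N$ be a piece at level $n$. Then $\widehat{\pi_1N}$ acts on the profinite Bass--Serre tree $T$ of the level-$n$ splitting, with vertex stabilizers the conjugates of the profinite completions of the level-$(n-1)$ pieces and edge stabilizers the conjugates of $\widehat{\pi_1S}$, for a cutting surface $S$, or of a peripheral $\widehat{\Z^2}$; and this action is relatively profinitely acylindrical. Given a finitely generated pro-$p$ subgroup $H\leq\widehat{\pi_1N}$, Theorem~\ref{intro:pro-pbass-serre} presents $H$ as the fundamental pro-$p$ group of a profinite graph of pro-$p$ groups whose vertex and edge groups are stabilizers in $H$ of vertices and edges of $T$. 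By the inductive hypothesis the vertex groups are free pro-$p$ products of free abelian pro-$p$ groups of rank $\leq 2$; the peripheral edge groups are free abelian pro-$p$ of rank $\leq 2$; and a non-peripheral edge group, being a pro-$p$ subgroup of a conjugate of $\widehat{\pi_1S}$, is free pro-$p$ (immediate when $S$ has boundary, since $\pi_1 S$ is then free; in general this is the analogous statement for surface groups, proved by the same one-step argument). The relative profinite acylindricity of the $H$-action now enters decisively: via a pro-$p$ acylindrical accessibility argument it forces the underlying profinite graph to be finite and, after maximal collapse, shows that $H$ is a free pro-$p$ product of finitely generated pro-$p$ subgroups of the vertex and edge stabilizers --- that is, of free pro-$p$ groups and free abelian pro-$p$ groups of rank $\leq 2$, each surviving cusp contributing exactly one free abelian factor of rank $\leq 2$. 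Writing each free pro-$p$ factor as a free pro-$p$ product of copies of $\Z_p$, we conclude that $H$ is a free pro-$p$ product of free abelian pro-$p$ groups of rank $\leq 2$, which closes the induction.

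The main obstacle is precisely the cusps, and it is concentrated in the last part of the inductive step. Theorem~\ref{intro:pro-pbass-serre} only yields a graph of pro-$p$ groups over an a priori infinite profinite graph, possibly with non-trivial non-peripheral edge groups, and one must show that relative profinite acylindricity collapses this to an honest free pro-$p$ product --- in particular ruling out any surviving amalgamation over a cusp torus or over a cut surface. Establishing the relative profinite acylindricity of the hierarchy after a suitable finite cover (the analogue, in the presence of boundary tori, of the acylindricity input used in \cite{wilton_distinguishing_2017} for the closed case) together with the accompanying pro-$p$ acylindrical accessibility statement is the technical heart of the proof.
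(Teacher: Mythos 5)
Your overall strategy (finite cover with a hierarchy, induction down the hierarchy, pro-$p$ Bass--Serre) matches the paper's, but the two steps you defer to ``relative profinite acylindricity plus pro-$p$ acylindrical accessibility'' are exactly where the proof lives, and as stated they do not go through. First, finiteness of the underlying graph in Theorem \ref{pro-pbass-serre} is only known when the edge stabilizers are procyclic (Corollary \ref{cyclic edge stabilizers}, resting on the accessibility bound of \cite[Lemma 2.2]{snopce_subgroup_2014}); the paper explicitly records that finiteness of $\Gamma$ is open in general. Your hierarchy has edge groups that are surface groups and peripheral $\Z^2$'s, for which no pro-$p$ accessibility bound is available. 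The paper gets around this by choosing the cutting surfaces to be \emph{relatively malnormal} (Theorem \ref{cusphiearachy}), so that the stabilizer of any \emph{pair} of distinct edges is procyclic (a boundary slope $L_i$ of a cutting surface inside a cusp), and then using Lemma \ref{pair edge stabilizers} to pass to a tree whose edge stabilizers each fix two edges of the original tree --- hence are procyclic --- before invoking accessibility (Lemma \ref{almost cyclic edge stabilizers}). Second, even once one has a finite graph of pro-$p$ groups, the surviving edge groups are nontrivial procyclic groups, so acylindricity does not ``collapse this to an honest free pro-$p$ product'' by any soft argument. The paper needs three further ingredients here: (a) Theorem \ref{special hierarchy}, built on \cite[Theorem 2.11]{agol_alternate_2016}, giving each vertex group a malnormal, fully elliptic hierarchy in which the $L_i$ stay elliptic all the way down to a free product of cyclic groups; (b) Corollary \ref{free vertex decomposition}, proved by induction on that hierarchy, showing that each vertex group of $H$ splits as a free pro-$p$ product in which its incident edge stabilizers are free factors; and (c) Lemma \ref{over free factor}, a hands-on rearrangement showing that an amalgam or HNN extension over a procyclic $H_0$ with $C_H(H_0)$ abelian of rank at most $2$ and $H_i=C_H(H_0)\amalg L_i$ is again a free pro-$p$ product of the required form. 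None of these is supplied, or even identified, in your outline; your acknowledgement that this is ``the technical heart'' does not substitute for the argument.

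A smaller but real issue: your reduction from $H$ to $H\cap\widehat{\pi_1M'}$ asserts that the class of free pro-$p$ products of free abelian pro-$p$ groups of rank $\le 2$ is closed under torsion-free extension by a finite $p$-group. That is not formal; it is essentially the main theorem of \cite{zalesskii_virtually_2016}, which is precisely what the paper invokes at the corresponding point. You should cite it rather than treat it as an obvious closure property.
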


Analysing  $\widehat{\pi_1M}$ for Thurston's geometries case-by-case, we  obtain  the classification of finitely generated pro-$p$ subgroups of  $\widehat{\pi_1M}$ for an arbitrary  compact, orientable 3-manifold.

\begin{theorem} \label{main} A finitely generated pro-$p$ subgroup of the profinite completion $\widehat{\pi_1 M}$ of the fundamental group $\pi_1 M$ of a compact, orientable $3$-manifold $M$  is a free pro-$p$ product of the  pro-$p$ groups from the following lists of isomorphism types.

\begin{enumerate}

\item[(i)]  For $p>3$:   $C_p$; $\Z_p$;  $\Z_p\times \Z_p$;  the pro-$p$ completion of   $(\Z\times \Z)\rtimes \Z$; and the pro-$p$ completion of a residually-$p$ fundamental group of a non-compact Seifert fibred manifold with hyperbolic base orbifold;


\item[(ii)]  For $p=3$, in addition to the list of (i) of possible free factors we have a  torsion-free extension of $\Z_3\times\Z_3\times \Z_3$ by $C_3$.

\item[(iii)] For  $p=2$, in addition to the list in (i) we have the following free factors:
\begin{enumerate}

\item[(1)] the cyclic groups $C_{2^m}$;
\item[(2)] the dihedral groups $D_{2^k}$;
\item[(3)] the generalized quaternion groups $Q_{2^n}$;
\item[(4)] the infinite dihedral pro-$2$ group $\Z_2\rtimes C_2$;
\item[(5)] the torsion-free extensions of $\Z_2\times \Z_2\times \Z_2$ by one of the following finite $2$-groups: $C_2$, $C_4$, $C_8$, $D_2$, $D_4$, $D_8$, $Q_{16}$;
\item[(6)] the pro-$2$ extension of the Klein-bottle group $\Z\rtimes\Z$;
\item[(7)]   the pro-$2$ completion of all torsion free extensions of  a soluble  group  $(\Z\rtimes \Z)\rtimes \Z$ with a group of order at most $2$.
\end{enumerate}
\end{enumerate}
\end{theorem}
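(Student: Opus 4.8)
The plan is the one sketched in the introduction: peel the topology of $M$ off in two layers, reduce to the individual geometric pieces, and read off the finitely generated pro-$p$ subgroups geometry by geometry. Write $H$ for a finitely generated pro-$p$ subgroup of $\widehat{\pi_1 M}$; after compressing $\partial M$ and doubling any hyperbolic pieces with geodesic boundary we may assume $\partial M$ is toroidal or empty. Profinite completion respects the (free-product) Kneser--Milnor decomposition $\pi_1 M = \pi_1 M_1 \ast \dots \ast \pi_1 M_k \ast F$, so $\widehat{\pi_1 M}$ acts on the associated profinite Bass--Serre tree with trivial edge stabilizers and with vertex stabilizers the conjugates of the $\widehat{\pi_1 M_i}$; restricting this action to $H$ and applying Theorem~\ref{intro:pro-pbass-serre} (the pro-$p$ group $H$ may be taken to act on a pro-$p$ tree, with the same stabilizers) exhibits $H$ as a free pro-$p$ product of finitely many conjugates of the $H \cap \widehat{\pi_1 M_i}$ together with a free pro-$p$ group, i.e.\ with copies of $\Z_p$. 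Thus we may assume $M$ is prime. If $\pi_1 M$ is finite then $\widehat{\pi_1 M} = \pi_1 M$ and $H$ is a $p$-subgroup of a spherical space-form group; if $M = S^2 \times S^1$ then $\widehat{\pi_1 M} = \widehat{\Z}$ and $H \in \{1, \Z_p\}$; otherwise $M$ is aspherical and admits a geometric (JSJ) decomposition, which is efficient, so that $\widehat{\pi_1 M}$ acts on a profinite tree with edge stabilizers conjugate into $\widehat{\Z}$ or $\widehat{\Z}^2$ and vertex stabilizers conjugate to the completions of the Seifert-fibred and finite-volume hyperbolic pieces. Applying Theorem~\ref{intro:pro-pbass-serre} to $H$ once more gives a finite graph-of-pro-$p$-groups decomposition of $H$ whose edge groups are $1$, $\Z_p$ or $\Z_p \times \Z_p$ and whose vertex groups are finitely generated pro-$p$ subgroups of completions of the pieces.

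The next step is to determine the finitely generated pro-$p$ subgroups of $\widehat{\pi_1 N}$ for $N$ a geometric piece. For $N$ hyperbolic this is \cite[Theorem~F]{wilton_distinguishing_2017} when $N$ is closed and the theorem on cusped hyperbolic manifolds stated above when $N$ has cusps: in either case such a subgroup is a free pro-$p$ product of free abelian pro-$p$ groups of rank $\le 2$, and the cusped case is the substantive one, itself resting on Theorem~\ref{intro:pro-pbass-serre} and a relatively profinitely acylindrical hierarchy. For $N$ Seifert-fibred we use the exact sequence $1 \to \widehat{\Z} \to \widehat{\pi_1 N} \to \widehat{\pi_1^{\mathrm{orb}}(B)} \to 1$: a pro-$p$ subgroup of $\widehat{\pi_1 N}$ is an extension of a finitely generated pro-$p$ subgroup of $\widehat{\pi_1^{\mathrm{orb}}(B)}$ by a pro-$p$ subgroup of $\widehat{\Z}$, and running through the possibilities for $B$ (spherical, Euclidean, hyperbolic; with or without boundary) and the orientation behaviour of the fibration produces $C_p$, $\Z_p$, $\Z_p \times \Z_p$, the infinite dihedral pro-$2$ group and the pro-$2$ Klein-bottle group (from non-orientable bases), the pro-$p$ completion of $(\Z \times \Z) \rtimes \Z$, and the pro-$p$ completion of a residually-$p$ fundamental group of a non-compact Seifert manifold with hyperbolic base. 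The remaining items --- the finite $2$-groups $C_{2^m}$, $D_{2^k}$, $Q_{2^n}$; the torsion-free pro-$p$ extensions of $\Z_p \times \Z_p \times \Z_p$ for $p \in \{2,3\}$; and the pro-$p$ completions of torsion-free extensions of $(\Z \rtimes \Z) \rtimes \Z$ by a group of order $\le 2$ --- arise from the finite $p$-subgroups of the completions involved (notably of free profinite groups, when $p = 2$), from the finitely many flat and $\mathrm{Nil}$ manifolds, and from a direct treatment of the $\mathrm{Sol}$ manifolds; isolating exactly which finite $p$-groups admit a torsion-free extension by $\Z_p^3$ inside one of these completions is a finite but somewhat delicate case-check, and the geometry $S^2 \times \R$ contributes nothing beyond $\Z_p$.

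It remains to collapse the graph-of-pro-$p$-groups decomposition of $H$ coming from the geometric tree --- edge groups $\Z_p$ or $\Z_p \times \Z_p$, vertex groups now known --- to a free pro-$p$ product of groups on the list, and this is the main obstacle. The mechanism, as in \cite{wilton_distinguishing_2017}, is that after passing to a finite-sheeted cover and running an appropriate hierarchy the $H$-action on the geometric profinite tree becomes relatively profinitely acylindrical, which forces every edge group of $H$ to be trivial: a nontrivial edge group would have to lie in the fibre or a peripheral torus of an adjacent Seifert piece, or be conjugate into a unique abelian factor of the free pro-$p$ product presenting an adjacent hyperbolic vertex group, and in either case --- using that the JSJ gluings are transverse to the Seifert fibrations --- this is incompatible with the acylindricity of the $H$-action (or, when the adjacent vertex group is already (virtually) abelian, the corresponding amalgamation is trivially absorbed). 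With all edge groups trivial, $H$ is a free pro-$p$ product of vertex groups from the previous step, each of which is on the list. The genuinely hard point is therefore exactly the one flagged in the introduction: establishing the relative acylindricity of these hierarchies in the presence of cusps, which is where Theorem~\ref{intro:pro-pbass-serre} and the theorem on cusped hyperbolic manifolds do the essential work; everything else reduces to the long but routine bookkeeping of the small geometries and of the finite $p$-subgroup contributions described above.
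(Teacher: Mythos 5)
Your outline matches the paper's at the top level (Kneser--Milnor, then JSJ, then geometry-by-geometry), and your catalogue of the geometric contributions is essentially the paper's. But there is a genuine gap in the step you yourself identify as ``the main obstacle'': passing from the JSJ decomposition to a free pro-$p$ product. Two problems. First, you invoke Theorem~\ref{intro:pro-pbass-serre} to get a \emph{finite} graph of pro-$p$ groups for $H$ with edge groups $1$, $\Z_p$ or $\Z_p\times\Z_p$; but that theorem only produces a \emph{profinite} graph of groups, and the finiteness of the underlying graph is established in the paper only when the edge stabilizers are procyclic (Corollary~\ref{cyclic edge stabilizers}, via the Snopce--Zalesskii bound). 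At the JSJ level the edge stabilizers are $\widehat{\Z}^2$, so pro-$p$ edge stabilizers can be $\Z_p\times\Z_p$ and no finiteness is available by this route. Second, your mechanism for killing the edge groups --- ``after passing to a finite-sheeted cover and running an appropriate hierarchy the $H$-action on the geometric profinite tree becomes relatively profinitely acylindrical, \dots\ using that the JSJ gluings are transverse to the Seifert fibrations'' --- is asserted, not proved, and it conflates two different inputs. The relative acylindricity of hierarchies is used in the paper only \emph{inside} cusped hyperbolic vertex pieces. What actually handles the JSJ step is the combination of Proposition~\ref{prop: Profinite 4-acylindrical} (the action of $\widehat{\pi_1M}$ on the profinite JSJ tree is $4$-acylindrical, with no passage to covers) and Theorem~\ref{general acylindrical}: a finitely generated pro-$p$ group acting $k$-acylindrically on a profinite tree is a free pro-$p$ product of vertex stabilizers and a free pro-$p$ group. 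That theorem is the missing key lemma; its proof is not routine (it goes through the quotients $G/\widetilde{\Phi^n(G)}$, which are virtually free pro-$p$ and act $k$-acylindrically by Lemma~\ref{acylindrical quotient}, applies the Herfort--Zalesskii structure theorem for virtually free pro-$p$ groups whose torsion elements have finite centralizers, and then takes a compatible inverse limit of free product decompositions). Your transversality heuristic does not substitute for this, and as written the edge groups of your putative decomposition are not shown to be trivial. The same tool (in the $0$-acylindrical case) is what legitimises your Kneser--Milnor step as well, where your appeal to Theorem~\ref{intro:pro-pbass-serre} again leaves the finiteness of the decomposition unaddressed.

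Two smaller points: the paper reduces to closed $M$ by the retraction trick rather than by doubling, which avoids having to discuss hyperbolic pieces with higher-genus boundary; and in the Seifert-fibred case over a hyperbolic base the statement that the image of $H$ in $\widehat{\pi_1 O}$ is a free pro-$p$ product of cyclic groups itself requires an argument (finiteness of centralizers of torsion in $\widehat{\pi_1 O}$, via the splitting of $\widehat{\pi_1 O}$ as a free profinite product with cyclic amalgamation), not just ``running through the possibilities for $B$''.
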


We note that, because the finitely generated pro-$p$ subgroups of profinite completions of 3-manifold groups are so restricted, they are not very useful for distinguishing 3-manifolds from each other. However, for the same reason, they are potentially useful for distinguishing the profinite completions of 3-manifold groups from the profinite completions of other discrete groups.

\subsection*{Acknowledgements}

The first author was supported by the EPSRC and the second author by Capes.  Much of the work for this paper was completed while the second author was a visiting fellow at Trinity College, Cambridge, and he acknowledges their hospitality.   Thanks are also due to the anonymous referee for pointing out a mistake in the first version of this paper.

\section{Preliminaries}

 In this section, we recall the necessary elements of the theory of profinite trees.

A \emph{graph} $\Gamma$ is a disjoint  union $E(\Gamma) \cup V(\Gamma)$
with two maps $d_0, d_1 : \Gamma \to V(\Gamma)$ that are the
identity on the set of vertices $V(\Gamma)$.  For an element $e$ of
the set of edges  $E(\Gamma)$, $d_0(e) $ is called the \emph{initial} and
$d_1(e) $ the \emph{terminal} vertex of $e$.

\begin{definition}
A \emph{profinite graph} $\Gamma$ is a graph such that:
\begin{enumerate}
\item $\Gamma$ is a profinite space (i.e.\ an inverse limit of finite
discrete spaces);
\item $V(\Gamma)$ is closed; and
\item the maps $d_0$ and $d_1$
are continuous.
\end{enumerate}
Note that $E(\Gamma)$ is not necessary closed.
\end{definition}

A \emph{morphism} $\alpha:\Gamma\longrightarrow \Delta$ of profinite graphs is a continuous map with $\alpha d_i=d_i\alpha$ for $i=0,1$.

By \cite[Prop.~1.7]{zalesskii_subgroups_1988} every profinite
graph $\Gamma$ is an inverse limit of finite quotient graphs of
$\Gamma$.

For a profinite space $X$  that is the inverse limit of finite
discrete spaces $X_j$, $[[\widehat{\mathbb{Z}} X]]$ is the inverse
limit  of $ [\widehat{\mathbb{Z}} X_j]$, where
$[\widehat{\mathbb{Z}} X_j]$ is the free
$\widehat{\mathbb{Z}}$-module with basis $X_j$. For a pointed
profinite space $(X, *)$ that is the inverse limit of pointed
finite discrete spaces $(X_j, *)$, $[[\widehat{\mathbb{Z}} (X,
*)]]$ is the inverse limit  of $ [\widehat{\mathbb{Z}} (X_j, *)]$,
where $[\widehat{\mathbb{Z}} (X_j, *)]$ is the free
$\widehat{\mathbb{Z}}$-module with basis $X_j \setminus \{ *
\}$ \cite[Chapter~5.2]{ribes_profinite_2010}.

For a profinite graph $\Gamma$, define the pointed space
$(E^*(\Gamma), *)$ as  $\Gamma / V(\Gamma)$, with the image of
$V(\Gamma)$ as a distinguished point $*$, and denote the image of $e\in E(\Gamma)$ by $\bar{e}$.  By definition  a profinite
tree  $\Gamma$ is a profinite graph with a short exact sequence
$$
0 \to [[\widehat{\mathbb{Z}}(E^*(\Gamma), *)]]
\stackrel{\delta}{\rightarrow} [[\widehat{\mathbb{Z}} V(\Gamma)]]
\stackrel{\epsilon}{\rightarrow} \widehat{\mathbb{Z}} \to 0
$$
where $\delta(\bar{e}) = d_1(e) - d_0(e)$ for every $e \in E(\Gamma)$ and $\epsilon(v) = 1$ for every $v \in V(\Gamma)$.  If $v$  and $w$ are elements  of a profinite tree   $T$, we denote by $[v,w]$ the smallest profinite subtree of $T$ containing $v$ and $w$ and call it a \emph{geodesic}.

A profinite graph is called a \emph{pro-$p$ tree} if one has the following exact sequence:

$$
0 \to [[\mathbb{F}_p(E^*(\Gamma), *)]]
\stackrel{\delta}{\rightarrow} [[\mathbb{F}_p V(\Gamma)]]
\stackrel{\epsilon}{\rightarrow} \mathbb{F}_p \to 0
$$
where $\delta(\bar{e}) = d_1(e) - d_0(e)$ for every $e \in E(\Gamma)$ and $\epsilon(v) = 1$ for every $v \in V(\Gamma)$. Note that any profinite tree is a pro-$p$ tree, but the converse is not true.  In fact, a connected profinite graph is a profinite tree if and only if it is a pro-$p$ tree for every prime $p$.

By definition, a profinite group $G$ \emph{acts} on a profinite graph
$\Gamma$ if  we have a continuous action of $G$ on the profinite
space $\Gamma$ that commutes with the maps $d_0$ and $d_1$.

 When we say that ${\cal G}$ is a \emph{finite graph of profinite groups}, we mean that it contains the data of the
underlying finite graph, the edge profinite groups, the vertex profinite groups and the attaching continuous maps. More precisely,
let $\Delta$ be a connected finite graph.  The data of a graph of profinite groups $({\cal G},\Delta)$ over
$\Delta$ consists of a profinite group ${\cal G}(m)$ for each $m\in \Delta$, and continuous monomorphisms
$\partial_i: {\cal G}(e)\longrightarrow {\cal G}(d_i(e))$ for each edge $e\in E(\Delta)$.

A finite graph of  groups $(\G,\Gamma)$ is said to be {\it
reduced}, if for every edge $e$ which is not a loop
neither $\partial_1\colon \G(e)\to \G(d_1(e))$ nor $\partial_0:
\G(e)\to \G(d_0(e))$ is an isomorphism; we call an edge $e$ which is not a loop and such that one of the edge maps $\partial_i$ is an isomorphism \emph{fictitious}. Any finite graph of  groups
can be transformed into a reduced finite graph of groups  by collapsing fictitious  edges using the
following procedure. If $e$ is a fictitious  edge, we can remove $\{e\}$ from the edge set of $\Gamma$, and
identify $d_0(e)$ and $d_1(e)$ to a new vertex $y$. Let
$\Gamma^\prime$ be the finite graph given by
$V(\Gamma^\prime)=\{y\}\sqcup V(\Gamma)\setminus\{d_0(e),d_1(e)\}$
and $E(\Gamma^\prime)=E(\Gamma)\setminus\{e\}$, and let
$(\G^\prime, \Gamma^\prime)$ denote the finite graph of  groups
based on $\Gamma^\prime$ given by $\G^\prime(y)=\G(d_1(e))$ if
$\partial_0(e)$ is an isomorphism, and $\G^\prime(y)=\G(d_0(e))$ if
$\partial_0$ is not an isomorphism. This procedure can be
continued until there are no fictitious edges. The resulting finite graph of groups
$(\overline \G,\overline \Gamma)$ is reduced.

\begin{remark}\label{reduction}  The reduction procedure just described does not change the fundamental group (as a group given by presentation), i.e. choosing a maximal subtree to contain the collapsing edge  the morphism $(\G,\Gamma)\longrightarrow (\G^\prime, \Gamma^\prime)$ induces the identity map on the fundamental group given by presentation by eliminating redundant relations associated with fictitious edges that are just collapsed by reduction. \end{remark}

The definition of the profinite fundamental  group of a connected
profinite graph of profinite groups is quite involved (see
\cite{zalesskii_fundamental_1989}). However, the profinite fundamental  group
$\Pi_1(\G,\Gamma)$ of a finite graph of finitely generated
profinite groups $(\G, \Gamma)$ can be defined as the profinite completion
of the abstract (usual) fundamental group $\Pi_1^{abs}(\G,\Gamma)$
(we use here that every subgroup of finite index in a finitely
generated profinite group is open \cite[Theorem 1.1 ]{nikolov_finitely_2007a}).  The fundamental profinite group
$\Pi_1(\G,\Gamma)$
has the following presentation:
\begin{eqnarray}\label{presentation}
\Pi_1(\mathcal{G}, \Gamma)&=&\langle \G(v), t_e\mid rel(\G(v)),
\partial_1(g)=\partial_0(g)^{t_e}, g\in \G(e),\nonumber\\
 & &t_e=1 \  {\rm for}\ e\in
T\rangle;
\end{eqnarray}
here $T$ is a maximal subtree of $\Gamma$ and
$\partial_0:\G(e)\longrightarrow
\G(d_0(e)),\partial_1:\G(e)\longrightarrow \G(d_1(e))$ are
monomorphisms.

In contrast to the abstract case, the vertex groups of $(\G,
\Gamma)$ do not always embed in $\Pi_1(\G,\Gamma)$, i.e.,
$\Pi_1(\G,\Gamma)$ is not always proper. However, the edge and
vertex groups can be replaced by their images in
$\Pi_1(\G,\Gamma)$ and after this replacement $\Pi_1(\G,\Gamma)$
becomes proper. Thus from now on we shall assume that
$\Pi_1(\G,\Gamma)$ is always proper, unless otherwise stated.

To obtain the definition of the fundamental pro-$p$ group of a finite graph of finitely generated pro-$p$ groups one simply replaces `profinite' by `pro-$p$' in the above definition.

The profinite (resp.\ pro-$p$) fundamental  group $\Pi_1(\G,\Gamma)$ acts on the standard profinite (resp.\
pro-$p$) tree $S$ (defined analogously to the abstract one)
associated to it, with vertex and edge stabilizers being conjugates
of vertex and edge groups, and such that
$S/\Pi_1(\G,\Gamma)=\Gamma$  \cite[Proposition 3.8]{zalesskii_subgroups_1988}.

\begin{example}\label{graph group completion} If $G=\pi_1(\G,\Gamma)$ is the fundamental group of a finite graph of (abstract) groups then one has the induced graph of profinite completions of edge  and vertex groups $(\widehat\G,\Gamma)$ (not necessarily proper) and  a natural homomorphism $G=\pi_1(\G,\Gamma)\longrightarrow \Pi_1(\widehat\G,\Gamma)$. It is an embedding if $\pi_1(\G,\Gamma)$ is residually finite. In this case $\Pi_1(\G,\Gamma)=\widehat{G}$ is simply the profinite completion.
Moreover,
 the standard tree $S(G)$
  naturally embeds in $S(\widehat G)$ if and only if  the edge and vertex groups $\G(e)$, $\G(v)$     are separable in  $\pi_1(\G,\Gamma)$, or equivalently $\G(e)$ are closed in $\G(d_0(e))$,   $\G(d_1(v))$    with
respect to the topology induced by the profinite topology on $G$
\cite[Proposition 2.5]{cotton-barratt_detecting_2013}.  In particular, this is the case  if vertex and edge groups are finitely generated and $G$ is subgroup separable.\end{example}

\medskip
In what follows, the notion of the fundamental pro-$p$ group of an infinite profinite graph of pro-$p$ groups is used only in Theorem \ref{pro-pbass-serre}, and in fact only in the form of the inverse limit of fundamental groups of finite graphs of finite $p$-groups.  We state this as a proposition that the reader can use as a definition. We need to define first a profinite graph of profinite groups. The definition is different from, but equivalent to, the one given in \cite{zalesskii_fundamental_1989}\footnote{The equivalence is an easy argument which we leave as an exercise to the reader.}.

\begin{definition} An inverse limit of  finite graphs of finite groups is called a \emph{profinite graph of profinite groups}.\end{definition}

\begin{proposition}\label{induces}  An inverse limit $(\curlyG,\Gamma)=\varprojlim_i (\curlyG_i,\Gamma_i)$ of finite graphs of profinite (resp. pro-$p$) groups  induces an inverse  limit of the fundamental profinite (resp. pro-$p$) groups $\varprojlim_i  \Pi_1(\curlyG_i,\Gamma_i)$ and $ \Pi_1(\curlyG,\Gamma)=\varprojlim_i  \Pi_1(\curlyG_i,\Gamma_i)$.\end{proposition}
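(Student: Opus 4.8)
The plan is to prove two things separately: that the groups $\Pi_1(\curlyG_i,\Gamma_i)$ genuinely form an inverse system, and that its limit is $\Pi_1(\curlyG,\Gamma)$. For the first I would record that the formation of the fundamental pro-$p$ (resp.\ profinite) group is functorial for morphisms of finite graphs of pro-$p$ (resp.\ profinite) groups. Given the presentation $(\ref{presentation})$, a morphism $(\curlyG_i,\Gamma_i)\to(\curlyG_j,\Gamma_j)$ --- a graph morphism together with compatible continuous homomorphisms of the vertex and edge groups --- determines a homomorphism of the presented groups: send each vertex group via the prescribed homomorphism, and each stable letter $t_e$ to the stable letter of its image edge, or to $1$ if that edge is collapsed by the graph morphism or was chosen into the maximal subtree. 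The relations attached to such edges become redundant exactly as in the reduction procedure of Remark \ref{reduction}, so defining relations go to relations; continuity is automatic since source and target are profinite and the homomorphism is prescribed on the compact vertex subgroups. Hence $\{\Pi_1(\curlyG_i,\Gamma_i)\}_i$ is an inverse system of pro-$p$ (resp.\ profinite) groups and $L:=\varprojlim_i\Pi_1(\curlyG_i,\Gamma_i)$ is defined.

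For the identity $\Pi_1(\curlyG,\Gamma)=L$ I would use the universal characterisation of the fundamental group of a profinite graph of profinite groups from \cite{zalesskii_fundamental_1989}. Writing a vertex of $\Gamma=\varprojlim_i\Gamma_i$ as $v=(v_i)_i$, the maps $\curlyG(v)=\varprojlim_i\curlyG_i(v_i)\to\curlyG_i(v_i)\to\Pi_1(\curlyG_i,\Gamma_i)$, together with the analogous maps on the edge groups and the images of the stable letters, assemble --- continuously in the point of $\Gamma$ --- into a family of continuous homomorphisms into $L$ compatible with the graph-of-groups structure in the sense of \cite{zalesskii_fundamental_1989}, hence induce a continuous homomorphism $f\colon\Pi_1(\curlyG,\Gamma)\to L$. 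The image of $f$ is closed, and since each $\Pi_1(\curlyG_j,\Gamma_j)$ is topologically generated by the images of its vertex groups and its stable letters, the image of $f$ projects densely onto every $\Pi_1(\curlyG_j,\Gamma_j)$; thus $f$ is onto. For injectivity, given $1\ne x\in\Pi_1(\curlyG,\Gamma)$ choose an open normal subgroup $N$ with $x\notin N$. Restricted to each vertex group $\curlyG(v)$ the quotient map onto the finite group $\Pi_1(\curlyG,\Gamma)/N$ is a continuous homomorphism to a finite group, hence factors through $\curlyG_i(v_i)$ for $i$ large; by compactness of $\Gamma$ and the continuity of the graph-of-groups structure a single index $i$ works for all the vertices and edges at once, so the quotient map factors through $\Pi_1(\curlyG_i,\Gamma_i)$. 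Therefore $f(x)$ is already non-trivial in $\Pi_1(\curlyG_i,\Gamma_i)$, so $f$ is a continuous bijection of profinite groups, i.e.\ an isomorphism.

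The step I expect to be the real obstacle is exactly this last one. Unlike the classical situation, an infinite profinite graph $\Gamma$ need not contain a maximal subtree, so $\Pi_1(\curlyG,\Gamma)$ is not presented by generators and relations, and the required uniformity --- one index $i$ through which a given finite quotient factors, simultaneously for the possibly infinitely many vertices and edges of $\Gamma$ --- has to be extracted from the (sheaf-theoretic) definition in \cite{zalesskii_fundamental_1989} rather than checked by hand. An alternative that sidesteps presentations entirely is to argue through standard trees: one has $S(\curlyG,\Gamma)=\varprojlim_i S(\curlyG_i,\Gamma_i)$, the compatible actions of the $\Pi_1(\curlyG_i,\Gamma_i)$ on the $S(\curlyG_i,\Gamma_i)$ assemble into an action of $L$ on this pro-$p$ (resp.\ profinite) tree, and this action has quotient $\varprojlim_i\Gamma_i=\Gamma$ with vertex and edge stabilisers conjugate to $\curlyG(v)=\varprojlim_i\curlyG_i(v_i)$ and $\curlyG(e)$ respectively; the standard-tree description of the fundamental group (cf.\ \cite[Proposition 3.8]{zalesskii_subgroups_1988} and \cite{zalesskii_fundamental_1989}) then identifies $L$ with $\Pi_1(\curlyG,\Gamma)$. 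Finally, the pro-$p$ and profinite statements follow from one and the same argument, replacing ``profinite'' by ``pro-$p$'' and ``finite'' by ``finite $p$-group'' throughout, since every ingredient used has a pro-$p$ analogue.
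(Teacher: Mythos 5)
Your treatment of the identification $\Pi_1(\curlyG,\Gamma)=\varprojlim_i\Pi_1(\curlyG_i,\Gamma_i)$ is in the spirit of the paper, which simply describes this step as a straightforward verification of the universal property from \cite[2.1]{zalesskii_fundamental_1989}; your alternative route via standard trees is a reasonable variant. The genuine gap is in your first step, the construction of the inverse system from the maximal-subtree presentation (\ref{presentation}). The rule ``send $t_e$ to the stable letter of its image edge, or to $1$ if that edge is collapsed or lies in the maximal subtree'' does not define a homomorphism in general: if $e$ belongs to the chosen maximal subtree $T_i$ of $\Gamma_i$ but its image $\alpha(e)$ is an edge of $\Gamma_j$ \emph{outside} the chosen maximal subtree $T_j$, then the defining relation $t_e=1$ of $\Pi_1(\curlyG_i,\Gamma_i)$ is sent to $t_{\alpha(e)}=1$, which is false in $\Pi_1(\curlyG_j,\Gamma_j)$. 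A morphism of finite graphs has no reason to carry a maximal subtree of the source into a prescribed maximal subtree of the target (already for two vertices joined by a pair of edges one can permute the edges), and this is not the situation of Remark \ref{reduction}, which concerns only the collapse of fictitious edges along a subtree chosen in advance to contain them.

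This is precisely the difficulty the paper's proof is designed to sidestep: it uses the base-vertex description of the fundamental group of a graph of groups (Definition 9(a) of \cite{serre_arbres_1977}) rather than the maximal-subtree presentation, so that by \cite[Proposition 2.4]{bass_covering_1993} a morphism of graphs of groups induces a homomorphism of (abstract) fundamental groups with no spanning trees to reconcile, and hence a continuous homomorphism of their profinite (resp.\ pro-$p$) completions. Your construction can be repaired by replacing $t_{\alpha(e)}$ with the element of $\Pi_1(\curlyG_j,\Gamma_j)$ attached to $\alpha(e)$ after conjugating by the edge paths in $T_j$ from the base vertex to its endpoints (the usual change-of-tree formula), but as written the assignment on generators does not respect the relations. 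Incidentally, the continuity of the induced maps is best justified as the paper does --- the $\Pi_1(\curlyG_i,\Gamma_i)$ are completions of the abstract fundamental groups, so one completes an abstract homomorphism --- rather than by asserting that a homomorphism prescribed on compact generating subgroups is automatically continuous.
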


\begin{proof} One uses Definition 9 (a) in \cite{serre_arbres_1977} with a base vertex (rather than a maximal subtree). Then by \cite[Proposition 2.4 ]{bass_covering_1993} a morphism of graphs of groups induces a homomorphism of fundamental groups, that in turn induces a homomorphism of profinite completions. So choosing a base vertex $v_0$ in $\Gamma$ and considering its image $v_{oi}$ in $\Gamma_i$ to be the base point in $\Gamma_i$, we have an inverse system   $\{\Pi_1(\curlyG_i,\Gamma_i)\}$.   The proof of $ \Pi_1(\curlyG,\Gamma)=\varprojlim_i  \Pi_1(\curlyG_i,\Gamma_i)$ is just a straightforward verification of the universal property from \cite[2.1]{zalesskii_fundamental_1989}.\end{proof}

\section{ Finitely generated pro-$p$ groups acting on profinite trees}

We now prove Theorem \ref{intro:pro-pbass-serre}.

\begin{theorem} \label{pro-pbass-serre} Let $G$ be a finitely generated pro-$p$ group acting on a pro-$p$ tree $T$. Then $G$ is the fundamental pro-$p$ group of a profinite graph of
pro-$p$ groups $(\G, \Gamma)$.
Moreover, the vertex and edge groups of $(\G, \Gamma)$ are  stabilizers of certain vertices and edges of $T$ respectively, and stabilizers of vertices and edges of $T$ in $G$ are conjugate to subgroups of vertex and edge groups of $(\G, \Gamma)$ respectively.
\end{theorem}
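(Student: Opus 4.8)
The plan is to realize $G$ as an inverse limit of finitely generated virtually free pro-$p$ groups and to transport the Herfort–Zalesskii splittings of these quotients through the limit using Proposition \ref{induces}. More precisely, since $G$ acts on the pro-$p$ tree $T$, for each open normal subgroup $U \trianglelefteq_o G$ the quotient $G/U$ acts on a finite quotient graph $T_U$ of $T$ (using that $T$ is an inverse limit of its finite quotient graphs, and refining so that the action of $G$ factors through $G/U$ on the nose); pulling back, $G$ acts on the finite graph $T_U$ with open stabilizers, and $G = \varprojlim_U G/U$ compatibly. The point of working pro-$p$ is that each $G/U$ is a \emph{finite} $p$-group acting on a finite graph, so classical Bass–Serre theory applies directly and one can also use the structure theory of \cite{herfort_virtually_2013}: indeed, one wants intermediate finitely generated virtually free pro-$p$ quotients $G_U$ (not merely finite ones) whose action on a finite graph is controlled, and \cite{herfort_virtually_2013} describes such groups as fundamental groups of finite graphs of finite $p$-groups.

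First I would set up, for a cofinal system of $U$, finite graphs of finitely generated pro-$p$ groups $(\G_U, \Gamma_U)$ with $\Pi_1(\G_U, \Gamma_U) = G_U$, where $G_U$ is a suitable finitely generated virtually free pro-$p$ quotient of $G$ through which the action on $T_U$ factors, arranged so that $G = \varprojlim_U G_U$; the vertex and edge groups $\G_U(m)$ should be chosen as the images in $G_U$ of stabilizers in $G$ of vertices/edges of $T$. Second I would check that the maps $G_U \to G_{U'}$ for $U \le U'$ are compatible with these graph-of-groups structures, i.e.\ that they are induced by morphisms of graphs of pro-$p$ groups $(\G_U, \Gamma_U) \to (\G_{U'}, \Gamma_{U'})$ — here one must be careful that the morphisms collapse edges and pass vertex groups to quotients coherently; reducing each finite graph of groups (in the sense of the ``reduced'' discussion and Remark \ref{reduction}) should make the system well-behaved. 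Third, setting $(\G, \Gamma) := \varprojlim_U (\G_U, \Gamma_U)$, which is a profinite graph of pro-$p$ groups by definition, Proposition \ref{induces} gives $\Pi_1(\G, \Gamma) = \varprojlim_U \Pi_1(\G_U, \Gamma_U) = \varprojlim_U G_U = G$, which is the first assertion.

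For the assertions about vertex and edge groups: by construction each $\G_U(m)$ is the image of a stabilizer $G_{\tilde m}$ (of some vertex or edge $\tilde m$ of $T$) under $G \to G_U$, and $\G(m) = \varprojlim_U \G_U(m)$; one then argues that $\G(m)$ equals the corresponding stabilizer $G_{\tilde m}$ in $G$, using that $G_{\tilde m}$ is closed and that it surjects onto each $\G_U(m)$, together with a compatibility/exactness argument for the inverse limit. Conversely, a given stabilizer $G_v$ of a vertex $v \in T$ maps into the stabilizer of the image of $v$ in each $T_U$, hence into a conjugate of some $\G_U(m)$; passing to the limit and using a compactness argument to pin down a single conjugating element (or at least a conjugating element in the limit) yields that $G_v$ is conjugate to a subgroup of a vertex group of $(\G,\Gamma)$, and similarly for edges.

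I expect the main obstacle to be the second and fourth steps: ensuring that the graph-of-groups structures on the $G_U$ form a genuine inverse system of \emph{graphs of groups} (morphisms in the sense of \cite{bass_covering_1993}, compatible with the attaching maps and with edge collapses), and then controlling conjugacy in the inverse limit — the conjugating elements for finite levels need not be compatible, so one must use compactness of $G$ (or of a coset space) to extract a coherent conjugator, exactly the kind of subtlety flagged in the introduction (the classical statement "$D$ lifts to $T$" fails, so one cannot expect a clean transversal). Getting the intermediate virtually free pro-$p$ quotients $G_U$ to exist with the right properties — finitely generated, virtually free, with the action on a finite graph factoring through and with stabilizers mapping onto the intended edge/vertex groups — is where \cite{herfort_virtually_2013} does the heavy lifting.
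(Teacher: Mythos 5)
Your overall strategy---realize $G$ as an inverse limit of finitely generated virtually free pro-$p$ quotients $G_U$, split each as a finite graph of finite $p$-groups via \cite{herfort_virtually_2013}, and pass to the limit with Proposition \ref{induces}---is exactly the paper's, but two essential ingredients are missing. First, you never actually construct the quotients $G_U$. The paper takes $\tilde U$ to be the subgroup of $U$ generated by all intersections of $U$ with vertex stabilizers and sets $G_U=G/\tilde U$: then $U/\tilde U$ acts freely on the pro-$p$ tree $T/\tilde U$, hence is free pro-$p$ by \cite[Corollary 3.6]{ribes_pro-p_2000}, so $G_U$ is virtually free pro-$p$ and $G=\varprojlim_U G_U$. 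Your opening move---finite quotient graphs $T_U$ of $T$ acted on by the finite $p$-group $G/U$, to which ``classical Bass--Serre theory applies directly''---does not work: a finite quotient of a pro-$p$ tree is not a tree, and a finite group acting on a finite graph carries no splitting. You then correctly ask for virtually free intermediate quotients, but producing them is precisely what the $\tilde U$ construction is for; \cite{herfort_virtually_2013} only enters afterwards, to split the $G_U$ so obtained.

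Second, and more seriously, you identify the compatibility of the graph-of-groups structures as ``the main obstacle'' and then assert that reducing each $(\G_U,\Gamma_U)$ ``should make the system well-behaved.'' It does not, on its own: the reduction morphism is not unique, so the reduced splittings of the various $G_U$ need not assemble into an inverse system, and resolving this is the crux of the whole proof. The paper's resolution is a compactness argument at the level of splittings: by \cite[Corollary 3.3]{weigel_virtually_2017} the set $\Omega_U$ of reduced graphs of groups obtainable from $(\G_U,\Gamma_U)$ is finite; choosing a \emph{linearly ordered} basis $\B$ of open normal subgroups (available because $G$ is finitely generated) one gets well-defined maps $\Omega_V\to\Omega_U$ for $V\subseteq U$, hence an inverse system of non-empty finite sets, whose inverse limit is non-empty; any element of $\varprojlim_U\Omega_U$ is a coherent choice of reduced splittings, and its inverse limit is the desired $(\G,\Gamma)$. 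Without this (or some substitute) your third step has nothing to take the limit of. Your sketch of the ``moreover'' clause---vertex groups as inverse limits of images of stabilizers, compactness to extract conjugators---is in the right spirit and consistent with what the construction yields.
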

\begin{proof}
 For every open normal subgroup $U$ of $G$ consider
$\tilde U$, a subgroup generated by all intersections with vertex
stabilizers. Then by Proposition 3.5 and Corollary 3.6 of  \cite{ribes_pro-p_2000}, the quotient group $U/\tilde U$ acts freely on the
pro-$p$ tree $T/\tilde U$ and therefore it is free pro-$p$. Thus
$G_U:=G/\tilde U$ is virtually free pro-$p$. By \cite[Theorem 1.1]{herfort_virtually_2013} it follows that $G_U$ is the fundamental pro-$p$
group $\Pi_1(\mathcal{G}_U, \Gamma_U)$ of a finite graph of finite
$p$-groups with cyclic edge stabilizers.

 By Remark \ref{reduction} we have a morphism of graphs of groups $\eta:(\G_U,\Gamma_U)\longrightarrow (\overline\G_U,\overline\Gamma_U)$ to a reduced graph of groups. By  \cite[Corollary 3.3]{weigel_virtually_2017} the number of reduced graphs of groups $(\overline\G_U,\overline\Gamma_U)$ that can be obtained from $(\G_U,\Gamma_U)$ is finite.  Let
$\Omega_U$ be the set of such reduced graphs of groups. Since $G$ is finitely generated we can choose a linearly ordered system $\B$ of open normal subgroups $U$ that form a basis of neighbourhoods of $1$.

 By \cite[Proposition 1.10]{ribes_normalizers_2014} (see also its proof), for $V\subseteq U$ both in
$\B$ with $G/\tilde V$ written as the fundamental pro-$p$ group $G/\tilde V=\Pi_1(\overline \G_V,\overline\Gamma_V)$ of a reduced graph of $p$-groups, one has a natural decomposition of
$G/\tilde U$ as the pro-$p$ fundamental group $G/\tilde U=\Pi_1(\G_{V,U},\overline \Gamma_V)$
of a finite graph of finite $p$-groups $(\G_{V,U},\overline \Gamma_V)$, where the vertex and edge groups are
$\G_{V,U}(x)=\overline \G(x)\tilde U/\tilde U$, $x\in V(\overline \Gamma_V)\sqcup E(\overline\Gamma_V)$. Thus we have a
morphism $\nu_{V,U}\colon (\overline \G_V,\overline\Gamma_V)\longrightarrow (\G_{V,U},\overline\Gamma_V)$
of graphs of groups such that the induced homomorphism on the
pro-$p$ fundamental groups coincides with the canonical projection
$\varphi_{V,U}\colon G/\tilde V\longrightarrow G/\tilde U$. Choose a reduction morphism
$\eta_U\colon (\G_{V,U},\overline \Gamma_V)\longrightarrow (\overline \G_{V,U},\overline \Gamma_U)$
to a reduced graph of groups $(\overline \G_{V,U},\overline \Gamma_U)$ (it is not unique); it   induces the identity map on the fundamental group $G/\tilde U$  (see Remark \ref{reduction})  and so $\eta_U\nu_{V,U}$   induces the homomorphism $\Pi_1(\overline \G_V,\overline\Gamma_V)\longrightarrow \Pi_1(\overline \G_{V,U},\overline \Gamma_U)$  on
the pro-$p$ fundamental groups that coincides with the canonical
projection $\varphi_{UV}\colon G/\tilde V\longrightarrow G/\tilde U$. Observe that the morphisms $\nu_{V,U}$ and therefore $\eta_U\nu_{V,U}$ restricted on the second variable is just the collapse of fictitious edges. Thus, for $V\subseteq U$, one has a map
$\omega_{V,U}\colon \Omega_V\to\Omega_U$. Since $\B$ is linearly ordered this yields an inverse system (otherwise it would have not, since the reduction morphism is not unique).

 Hence
$\Omega=\varprojlim_{U}\Omega_U$ is non-empty. Let
$(\overline \G_U,\overline \Gamma_{U})\in\Omega$. Then
$(\overline \G,\Gamma)$ given by $(\overline \G,\Gamma)=\varprojlim
(\overline \G_U,\overline \Gamma_U)$,
is a profinite graph of  pro-$p$ groups
satisfying $G\simeq \Pi_1(\overline \G,\Gamma)$ (cf. Proposition \ref{induces}).
\end{proof}

\begin{corollary} \label{bound} Suppose the graphs $\Gamma_U$ in the proof of Theorem \ref{pro-pbass-serre}  are bounded   independently of $U$. Then $\Gamma$ is finite and so  $G$ is the fundamental group of a finite graph of pro-$p$ groups $(\G,\Gamma)$. \end{corollary}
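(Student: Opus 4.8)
The plan is to leverage the inverse system constructed in the proof of Theorem \ref{pro-pbass-serre}, observing that a uniform bound on the graphs forces the inverse limit to stabilise. First I would recall that in that proof the reduced graphs $\overline\Gamma_U$ are obtained from $\Gamma_U$ by collapsing fictitious edges, so if the $\Gamma_U$ have size (number of vertices plus edges) bounded by some constant $N$ independent of $U$, then the same bound $N$ applies to every $\overline\Gamma_U$. The bonding maps $\omega_{V,U}\colon\Omega_V\to\Omega_U$ in the inverse system $\Omega=\varprojlim_U\Omega_U$ are induced by morphisms $\eta_U\nu_{V,U}$ whose effect on the underlying graph is, again, a collapse of fictitious edges, hence a surjection $\overline\Gamma_V\twoheadrightarrow\overline\Gamma_U$ on the level of graphs.

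The key step is then a pigeonhole/stabilisation argument: since $\B$ is linearly ordered and the underlying graphs of the members of $\Omega$ form an inverse system of finite graphs each of size at most $N$, the cardinalities $|\overline\Gamma_U|$ are non-increasing along the system (a collapse cannot increase the number of cells) and bounded below by $1$, so they are eventually constant. Once the cardinality stabilises, a surjection of finite sets of equal cardinality is a bijection, so the graph morphisms $\overline\Gamma_V\to\overline\Gamma_U$ become isomorphisms for all sufficiently small $V,U$. Hence the inverse limit graph $\Gamma=\varprojlim_U\overline\Gamma_U$ is isomorphic to $\overline\Gamma_{U_0}$ for some fixed $U_0$, i.e.\ $\Gamma$ is a finite graph.

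Having established that $\Gamma$ is finite, I would note that the profinite graph of pro-$p$ groups $(\overline\G,\Gamma)=\varprojlim_U(\overline\G_U,\overline\Gamma_U)$ now has a finite underlying graph, with each vertex group and edge group an inverse limit of finite $p$-groups along the system, hence a pro-$p$ group; and $G\simeq\Pi_1(\overline\G,\Gamma)$ by Proposition \ref{induces}. Since the fundamental pro-$p$ group of a finite graph of pro-$p$ groups is exactly the notion defined earlier in the Preliminaries (obtained by replacing `profinite' by `pro-$p$' in the presentation (\ref{presentation})), this is the desired conclusion. The vertex and edge groups are stabilisers of vertices and edges of $T$, exactly as in Theorem \ref{pro-pbass-serre}.

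The main obstacle I anticipate is making the stabilisation argument fully rigorous at the level of \emph{graphs of groups} rather than just graphs: one must check that once the underlying graphs $\overline\Gamma_U$ stabilise, the morphisms $(\overline\G_V,\overline\Gamma_V)\to(\overline\G_U,\overline\Gamma_U)$ are genuinely isomorphisms of graphs of groups for the purposes of forming the inverse limit --- or, more carefully, that the inverse limit of the vertex and edge groups over the (now cofinal) subsystem where the graph is constant is well-defined and yields a bona fide finite graph of pro-$p$ groups. This is essentially bookkeeping with the already-constructed inverse system, but it is the point where one must be attentive, since a priori the maps on vertex groups need not be surjective even when the maps on graphs are bijective; fortunately that is harmless, as an inverse limit of finite $p$-groups over any inverse system indexed by the cofinal chain $\B$ is still a pro-$p$ group.
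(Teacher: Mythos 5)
Your overall route is the intended one: the paper gives no separate proof of this corollary precisely because it is immediate from the construction in Theorem \ref{pro-pbass-serre} --- $\Gamma=\varprojlim_U\overline\Gamma_U$ is an inverse limit of finite graphs of uniformly bounded size, hence finite, and then Proposition \ref{induces} gives $G\simeq\Pi_1(\overline\G,\Gamma)$ with the underlying graph finite. Your closing remark is also fine: the vertex and edge groups of the limit are inverse limits of finite $p$-groups along the chain $\B$, hence pro-$p$, and nothing about the limit requires the maps on groups to be surjective (though in fact they are, since $\G_{V,U}(x)=\overline\G(x)\tilde U/\tilde U$ by construction).

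However, your stabilisation step has the monotonicity backwards, and as written it would ``prove'' the corollary without using the hypothesis at all --- which cannot be right, since without the bound $\Gamma$ may well be infinite. For $V\subseteq U$ the bonding map is a collapse $\overline\Gamma_V\twoheadrightarrow\overline\Gamma_U$ from the \emph{finer} level onto the \emph{coarser} one, so as $U$ shrinks along $\B$ the cardinalities $|\overline\Gamma_U|$ are \emph{non-decreasing}, not non-increasing; it is therefore the upper bound $N$ supplied by the hypothesis, not the lower bound $1$, that forces the sequence of integers to stabilise, after which your ``surjection of finite sets of equal cardinality is a bijection'' argument applies and the limit is attained at some finite stage $U_0$. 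With that correction the proof is complete. (Alternatively, one can skip stabilisation entirely: an inverse limit of finite sets each of cardinality at most $N$ has cardinality at most $N$, since any $N+1$ distinct points of the limit would already be separated at some finite level.)
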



It is not known whether $\Gamma$ can be made finite in Theorem \ref{pro-pbass-serre}. In the abstract case it comes automatically. One could achieve it if one could  bound the size of a  reduced finite graph of finite $p$-groups in terms of the minimal number of generators of its pro-$p$ fundamental group. In the case of cyclic edge groups this is possible.

\begin{corollary} \label{cyclic edge stabilizers} Let $G$ be a finitely generated pro-$p$ group acting on a pro-$p$ tree $T$ with cyclic edge stabilizers. Then $G$ is the fundamental pro-$p$ group of a finite graph of
pro-$p$ groups $(\G, \Gamma)$.
Moreover, the vertex and edge groups of $(\G, \Gamma)$ are  stabilizers of certain vertices and edges of $T$ respectively, and stabilizers of vertices and edges of $T$ in $G$ are conjugate to subgroups of vertex and edge groups of $(\G, \Gamma)$ respectively.\end{corollary}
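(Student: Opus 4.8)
The plan is to deduce Corollary \ref{cyclic edge stabilizers} from Theorem \ref{pro-pbass-serre} together with Corollary \ref{bound}; the only thing that needs to be added is a uniform bound on the size of the finite graphs $\Gamma_U$ arising in the proof of Theorem \ref{pro-pbass-serre}. Recall that in that proof each $G_U = G/\tilde U$ is a finitely generated virtually free pro-$p$ group presented, via \cite[Theorem 1.1]{herfort_virtually_2013}, as $\Pi_1(\mathcal{G}_U,\Gamma_U)$ with finite $p$-group vertex groups and \emph{cyclic} edge groups, and then reduced to $(\overline\G_U,\overline\Gamma_U)$. Since each $G_U$ is a quotient of $G$, the minimal number of generators $d(G_U)$ is at most $d(G)$, which is a fixed finite number because $G$ is finitely generated. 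So it suffices to show that a reduced finite graph of finite $p$-groups with cyclic edge groups whose fundamental pro-$p$ group needs at most $d$ generators has at most $f(d)$ edges and vertices for some function $f$ of $d$ alone.

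The key step is therefore the following combinatorial estimate. Let $(\mathcal{H},\Delta)$ be a reduced finite graph of finite $p$-groups with all edge groups cyclic, and set $G=\Pi_1(\mathcal{H},\Delta)$. First I would bound the first Betti number of $\Delta$: collapsing a maximal subtree $T$ of $\Delta$ yields a free pro-$p$ quotient of $G$ of rank $|E(\Delta)\setminus E(T)| = b_1(\Delta)$, hence $b_1(\Delta)\le d(G)\le d$. Next I would bound the number of vertices. Because the graph of groups is reduced, for every non-loop edge $e$ neither edge map $\partial_0,\partial_1$ is surjective, so $|\mathcal{H}(e)| \le \tfrac12|\mathcal{H}(v)|$ for each endpoint $v$; in particular along any embedded path the orders of the incident vertex groups cannot all be small unless the path is short. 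More usefully, in a reduced graph of groups each vertex $v$ satisfies $d(G)\ge$ (something controlled by $\mathcal{H}(v)$ and the valence of $v$): the $t_e$ for the $b_1(\Delta)$ non-tree edges together with generating sets of the vertex groups generate $G$, so $\sum_{v\in V(\Delta)} (d(\mathcal{H}(v)) - r(v)) \le d(G)$ where $r(v)$ records how many generators of $\mathcal{H}(v)$ are killed by the relations $\partial_1(g)=\partial_0(g)^{t_e}$ coming from incident edges, and since edge groups are cyclic each incident edge contributes at most one such relation, giving $d(\mathcal{H}(v)) \le |E(v)| + (\text{local contribution to } d(G))$. One then combines this with the fact that every vertex of a reduced graph of groups with trivial first Betti number that is a leaf has $d(\mathcal{H}(v))\ge 2$ (else the unique incident edge would be fictitious, as its cyclic edge group would surject onto the rank-$\le1$ vertex group), and that vertices of valence $2$ force the two edge inclusions to be proper, to run a pruning argument: repeatedly delete valence-$1$ vertices, bounding how many times this can happen. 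Each such deletion either decreases $b_1$ or uses up one unit of the generator budget, so after finitely many steps — at most a function of $d$ — no leaves remain, and a finite graph with no leaves and $b_1\le d$ has a number of vertices and edges bounded in terms of $d$.

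Carrying this out, I would obtain an explicit $f(d)$ (a polynomial in $d$ suffices, though sharpness is irrelevant here) such that $|\Delta|\le f(d)$. Applying this with $d = d(G)$ to each reduced graph $\overline\Gamma_U$ in the proof of Theorem \ref{pro-pbass-serre} shows that the graphs there are bounded independently of $U$, so Corollary \ref{bound} applies and $\Gamma = \varprojlim_U \overline\Gamma_U$ is finite; hence $G$ is the fundamental pro-$p$ group of a \emph{finite} graph of pro-$p$ groups $(\G,\Gamma)$, and the statements about vertex and edge stabilizers are inherited verbatim from Theorem \ref{pro-pbass-serre}.

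The main obstacle I anticipate is the bookkeeping in the generator count at vertices of a reduced graph of groups: making precise the claim that a cyclic edge relation can absorb at most one generator of a vertex group, and handling loop edges (where both $\partial_0,\partial_1$ land in the same vertex group and the reducedness hypothesis imposes no properness condition) without losing control of the bound. One clean way around the loop issue is to note that a loop at $v$ whose cyclic edge group fails to be absorbed simply contributes a free generator to $b_1(\Delta)$, which is already bounded, so loops are cheap; the genuinely delicate estimate is the leaf-pruning bound, where one must verify that the process terminates after $O(f(d))$ steps rather than potentially cascading along a long path. I expect the cited \cite[Corollary 3.3]{weigel_virtually_2017} and \cite[Theorem 1.1]{herfort_virtually_2013} to supply exactly the structural facts (finiteness of reductions, cyclicity of edge groups) needed to make this pruning argument go through.
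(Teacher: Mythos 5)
Your reduction is exactly the paper's: the only missing ingredient beyond Theorem \ref{pro-pbass-serre} and Corollary \ref{bound} is a bound on $|\Gamma_U|$ independent of $U$, and the paper obtains this in one line by citing \cite[Lemma 2.2]{snopce_subgroup_2014}, which is precisely the statement you set out to prove by hand (a reduced finite graph of finite $p$-groups with cyclic edge groups has size bounded in terms of $d$ of its fundamental pro-$p$ group). So the architecture of your argument matches the paper's; the difference is only that you attempt to reprove the cited lemma. Be warned, though, that your from-scratch sketch contains a false step: a leaf vertex need not satisfy $d(\mathcal{H}(v))\ge 2$, since a cyclic vertex group $C_{p^2}$ with edge group the proper subgroup $C_p$ is perfectly reduced (the edge group is then contained in the Frattini subgroup of the vertex group and contributes nothing to the mod-$p$ generator count, which is why the naive counting $\sum_v d(\mathcal{H}(v)) - |E(T)|$ is not by itself a useful lower bound for $d(G)$); so if you want a self-contained proof you must redo that estimate more carefully, whereas simply invoking \cite[Lemma 2.2]{snopce_subgroup_2014} as the paper does closes the argument.
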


\begin{proof}  By \cite[Lemma 2.2]{snopce_subgroup_2014} in the case of cyclic edge stabilizers the size of $\Gamma_U$ from the proof of Theorem \ref{pro-pbass-serre}  is bounded  independently of $U$, so the result follows from Corollary \ref{bound}. \end{proof}

\medskip

\begin{definition}\label{def: Profinite acylindrical}
The action of a profinite group $\widehat{\Gamma}$ on a profinite (or pro-$p$) tree $T$ is said to be \emph{$k$-acylindrical}, for $k$ a constant, if the set of fixed points of $\gamma$ has diameter at most $k$ whenever $\gamma\neq 1$.
\end{definition}

\begin{proposition}\label{virtfreeacylindrical} Let $G$ be a finitely generated pro-$p$ group acting $k$-acylindrically and virtually freely on a pro-$p$ tree. Then $G$ is a free pro-$p$ product of finite $p$-groups and a free pro-$p$ group.\end{proposition}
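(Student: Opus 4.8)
\emph{Strategy.} The plan is to reduce the statement to one about a finite graph of finite $p$-groups, and then to use $k$-acylindricity, via a normalizer argument, to kill the edge groups.

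First, since an open subgroup of $G$ acts freely on $T$, every vertex stabilizer meets that open subgroup trivially and is therefore a finite $p$-group; thus $G$ is a finitely generated virtually free pro-$p$ group, and by \cite[Theorem 1.1]{herfort_virtually_2013} we may write $G=\Pi_1(\G,\Gamma)$ for a finite graph of finite $p$-groups with cyclic edge groups, which after collapsing fictitious edges we may take to be reduced. If every edge group of $(\G,\Gamma)$ is trivial, then the presentation \eqref{presentation} exhibits $G$ as the free pro-$p$ product of the vertex groups $\G(v)$ together with a free pro-$p$ group of rank equal to the first Betti number of $\Gamma$, which is exactly the asserted form. So it suffices to prove that every edge group is trivial.

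The mechanism I would use is the following. Let $C\leq G$ be a nontrivial finite $p$-subgroup. As a finite $p$-group acting on a pro-$p$ tree, $C$ fixes a nonempty pro-$p$ subtree $\Fix_T(C)$, and by $k$-acylindricity this subtree has diameter at most $k$. The normalizer $N_G(C)$ stabilizes $\Fix_T(C)$ setwise (if $g\in N_G(C)$ and $c\in C$ fixes $x\in T$, then $c$ fixes $gx$ since $g^{-1}cg\in C$), so $N_G(C)$ acts on a pro-$p$ tree of diameter at most $k$; since a pro-$p$ group acting on a pro-$p$ tree of finite diameter fixes a vertex, $N_G(C)$ fixes some vertex $w$ of $T$. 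As the action is virtually free the stabilizer $G_w$ is a finite $p$-group, and therefore $N_G(C)$ is finite. In other words, the normalizer in $G$ of any nontrivial finite $p$-subgroup is finite.

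Now suppose, towards a contradiction, that some edge $e$ of $\Gamma$ has $\G(e)=C\neq 1$. If $e$ is not a loop, take a maximal subtree of $\Gamma$ containing $e$; then $A_i:=\G(d_i(e))$ embeds in $G$ with $C\leq A_0\cap A_1$, and $\langle A_0,A_1\rangle$ is the amalgamated free pro-$p$ product $A_0\amalg_C A_1$, where $C\subsetneq A_i$ by reducedness. Since a proper subgroup of a finite $p$-group is properly contained in its normalizer, pick $a_i\in N_{A_i}(C)\setminus C$. In the standard pro-$p$ tree of $A_0\amalg_C A_1$ the element $a_i$ fixes the vertex $v_i$ attached to $A_i$, but fixes neither the edge joining $v_0$ to $v_1$ nor the opposite vertex; hence $\Fix(a_0)$ and $\Fix(a_1)$ are disjoint, so $\langle a_0,a_1\rangle$ cannot fix a vertex and is infinite. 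This contradicts the previous paragraph, since $\langle a_0,a_1\rangle\leq N_G(C)$. A loop $e$ with $\G(e)=C\neq 1$ is handled identically, using that the stable letter conjugates one image of $C$ to another in the vertex group and has infinite order, so that the normalizer of $C$ (or of a conjugate of $C$) is again infinite. The point I expect to require real care is in the mechanism paragraph: a pro-$p$ tree of diameter at most $k$ need not be finite, so the assertion that a pro-$p$ group acting on a pro-$p$ tree of bounded diameter fixes a vertex must be drawn from the fixed-point theory for pro-$p$ groups acting on pro-$p$ trees rather than from the elementary ``centre of a finite tree'' argument, and one must also verify that $\Fix_T(C)$ is a genuine pro-$p$ subtree on which $N_G(C)$ acts continuously.
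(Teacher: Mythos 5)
Your proof is correct in substance but takes a genuinely different route from the paper's. The paper's proof is a two-line reduction: since an open subgroup acts freely, $G$ is virtually free pro-$p$, and \cite[Theorem 1]{herfort_virtually_2008} says that a virtually free pro-$p$ group whose torsion elements have finite centralizers has exactly the asserted free product structure; acylindricity then gives finiteness of $C_G(g)$ for torsion $g$ because $C_G(g)$ acts on the bounded-diameter fixed subtree $T^g$ with finite vertex stabilizers. You instead take the \cite{herfort_virtually_2013} decomposition as a finite graph of finite $p$-groups and kill the edge groups by a normalizer argument, which amounts to re-proving the relevant case of \cite[Theorem 1]{herfort_virtually_2008} rather than quoting it --- more self-contained, but longer, and note that your normalizer statement already follows from the paper's centralizer statement since $N_G(C)/C_G(C)$ embeds in the finite group $\mathrm{Aut}(C)$ and $C_G(C)\leq C_G(c)$ for $1\neq c\in C$. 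The step you rightly flag --- that a pro-$p$ group acting on a pro-$p$ tree of diameter at most $k$ with finite vertex stabilizers must be finite --- is exactly the point the paper's proof also leans on (its ``$|C_G(g)|=\infty$ would imply $|T^g|=\infty$ contradicting the hypothesis''), so you are not worse off there. Two small things to tighten: your loop case needs a word about the subcase where an edge map of the loop is onto the vertex group (reducedness does not exclude this for loops; there the stable letter itself normalizes the vertex group, so the contradiction is immediate), and the phrase ``with cyclic edge groups'' after the citation of \cite[Theorem 1.1]{herfort_virtually_2013} is not needed and not obviously part of that theorem --- you never use it, so drop it. Finally, observe that the paper's route generalizes beyond finite generation (second countable suffices, replacing \cite{herfort_virtually_2008} by \cite{macquarrie_second_2017}), whereas your reliance on \cite{herfort_virtually_2013} ties you to the finitely generated case.
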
 

\begin{proof}  Let $F$ be an open subgroup of $G$ acting freely on $T$. By \cite[Corollary 3.6]{ribes_pro-p_2000} $F$ is free pro-$p$. By \cite[Theorem 1]{herfort_virtually_2008} a virtually free pro-$p$ group whose torsion elements have finite centralizers satisfies the conclusion of the proposition. So we just need to show that torsion elements have finite centralizers. Let $g$ be a torsion element of $G$ and let $T^g$ be the subtree of fixed points (see \cite[Theorem 2.8]{zalesskii_subgroups_1988}. Then $C_G(g)$ acts on $T^g$. Since the action is virtually free  $|C_G(g)|=\infty$ would imply $|T^g|=\infty$ contradicting the hypothesis. Hence $C_G(g)$ is finite. This finishes the proof.\end{proof}

\begin{remark} 
The proposition is valid for second countable pro-$p$ groups. One just needs to replace the reference \cite{herfort_virtually_2008}  in the proof with \cite{macquarrie_second_2017}.
\end{remark}

For a normal subgroup $U$ of a profinite group $G$ acting on a profinite tree $T$ we denote by $\tilde U$ the subgroup of $U$ generated by all vertex stabilizers. Clearly $\tilde U$ is normal in $G$ and by \cite[Prop. 2.5]{zalesskii_subgroups_1988} $T/\tilde U$ is a profinite tree.

\begin{lemma}\label{acylindrical quotient} Let $G$ be a pro-$p$
 group acting $k$-acylindrically on a profinite tree $T$ and $U$ a closed normal subgroup of $G$. Then the action of $G_U=G/\tilde U$ on $T_U=T/\tilde U$ is $k$-acylindrical.\end{lemma}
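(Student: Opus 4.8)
Suppose the statement fails. Then there is $1\neq\bar g\in G_U$ whose fixed-point set $\Fix_{T_U}(\bar g)=\Fix_{T_U}(\overline{\langle\bar g\rangle})$, which is a subtree of $T_U$ by \cite[Theorem 2.8]{zalesskii_subgroups_1988}, has diameter strictly greater than $k$; hence it contains a geodesic $[\bar v,\bar w]$ with $\mathrm{diam}\,[\bar v,\bar w]>k$, and $\bar g$ fixes $[\bar v,\bar w]$ pointwise. Write $q\colon T\to T_U=T/\tilde U$ for the quotient morphism. Two elementary remarks will be used. First, $q$ does not increase diameters: for any geodesic $[v,w]$ in $T$ the image $q([v,w])$ is a subtree of $T_U$ containing $q(v)$ and $q(w)$, hence containing $[q(v),q(w)]$, and $q$ restricts to a surjective morphism $[v,w]\to q([v,w])$, so $\mathrm{diam}\,q([v,w])\leq\mathrm{diam}\,[v,w]$. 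Second, the fibres of $q$ are exactly the $\tilde U$-orbits in $T$, so the set-wise stabiliser in $G$ of a fibre $q^{-1}(x)$ equals $\tilde U\cdot\mathrm{Stab}_G(\hat x)$ for any lift $\hat x\in q^{-1}(x)$, and likewise for edges.

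The goal is to produce a nontrivial element $g'\in G$, together with lifts $v,w\in T$ of $\bar v,\bar w$, such that $g'$ fixes both $v$ and $w$: then, by the first remark, $\mathrm{diam}\,\Fix_T(g')\geq \mathrm{diam}\,[v,w]\geq\mathrm{diam}\,[\bar v,\bar w]>k$, contradicting $k$-acylindricity of the action of $G$ on $T$ (note $g'\neq 1$ because $\bar g\neq 1$). The second remark gives the construction at a single vertex: since $\bar g$ fixes $\bar v$, any representative $g$ of $\bar g$ lies in the set-wise stabiliser of $q^{-1}(\bar v)=\tilde U\cdot\mathrm{Stab}_G(v)$ for a chosen lift $v$, so $g=us$ with $u\in\tilde U$ and $s\in\mathrm{Stab}_G(v)$, and $g':=u^{-1}g$ is a representative of $\bar g$ fixing $v$.

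The one substantive point is to arrange, with the same representative $g'$, that a lift of $\bar w$ is also fixed. The plan is to lift $[\bar v,\bar w]$ by a ``straightening'' along the geodesic: having fixed $g'$ and a lift $v$ of $\bar v$ fixed by $g'$, extend edge by edge; at the current terminal vertex $x$ the element $g'$ fixes $x$ and permutes the lifts at $x$ of the next edge $\bar e$ of $[\bar v,\bar w]$, which form a single orbit of $\mathrm{Stab}_{\tilde U}(x)$, so after correcting $g'$ by a suitable element of $\mathrm{Stab}_{\tilde U}(x)$ one may assume $g'$ also fixes a chosen lift of $\bar e$, and hence the next vertex. The delicate part — and the place where the hypothesis that $\tilde U$ is generated by vertex stabilizers enters essentially — is that these successive corrections can be chosen compatibly, that is, so as not to disturb the already-fixed initial segment of the lifted geodesic; this reduces to a transitivity statement for the $\tilde U$-stabilizers of vertices of $T$ on the sets of edge-lifts over $[\bar v,\bar w]$, which I would establish by an internal induction along the geodesic. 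For the genuinely profinite (infinite-diameter) geodesic one carries this out first in each finite quotient graph of $T$, where the straightening is a finite procedure, and then passes to the inverse limit, using compactness to extract a single coherent representative $g'$ with the required fixed lifts. I expect this compatibility of the corrections to be the main obstacle.
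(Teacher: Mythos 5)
Your reduction --- find a nontrivial representative $g'$ of $\bar g\tilde U$ fixing lifts of both $\bar v$ and $\bar w$ --- is where the whole proof has to happen, and the step you flag as ``the main obstacle'' is not merely delicate: as sketched, it fails. If $g'$ fixes the lifted segment $[v,x]$ and you correct by $u\in\mathrm{Stab}_{\tilde U}(x)$ so that $u^{-1}g'$ fixes the next edge, then $u^{-1}g'$ moves $v$ to $u^{-1}v$, and there is no reason $u$ can be taken in $\mathrm{Stab}_{\tilde U}(x)\cap\mathrm{Stab}_{\tilde U}(v)$: the lifts of the next edge at $x$ form a single $\mathrm{Stab}_{\tilde U}(x)$-orbit but in general several orbits under the stabilizer of the segment already fixed (e.g.\ when $\tilde U$ stabilizes $x$ nontrivially but stabilizes $[v,x]$ trivially), and $g'$ may move your chosen lift out of its orbit under the smaller group. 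After $m$ steps you are then only guaranteed a fixed segment of length one. Nothing in your outline rules this out; indeed the outline nowhere uses that $G$ is pro-$p$, whereas the pro-$p$ hypothesis enters the paper's proof essentially, via \cite[Lemma 11.2]{wilton_distinguishing_2017}, which forces every nontrivial edge stabilizer in a pro-$p$ group acting on a pro-$p$ tree to coincide with one of its two vertex stabilizers. A secondary issue is that a geodesic in a profinite tree is not a finite edge-path, so ``extend edge by edge'' already presupposes the finite-quotient/inverse-limit scaffolding you only allude to.

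The paper's proof avoids lifting altogether and works downstairs. Using the cited Lemma 11.2 it first shows that a maximal connected subgraph $D$ of $T$ all of whose edges have nontrivial $G$-stabilizers has diameter at most $k$; it then shows that any edge of $T_U$ adjacent to the image $D_U$ but not contained in it has trivial stabilizer in $G_U$ (a suitable preimage edge lies outside $D$, hence has trivial stabilizer by maximality of $D$, and stabilizers in the quotient are images of $\tilde U\cdot(\text{stabilizers upstairs})$). Consequently the fixed-point set of any $1\neq g_U$ is trapped inside some $D_U$, whose diameter is at most that of $D$, hence at most $k$. To salvage your approach you would in effect need to prove this containment anyway, so I would recommend dropping the lifting strategy and arguing directly with fixed-point sets in the quotient.
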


\begin{proof} First note that a nontrivial  stabilizer of any edge
$e$ coincides with one of its vertex stabilizers $G_v$ or $G_w$,
say $G_w$,  since otherwise by \cite[Lemma 11.2]{wilton_distinguishing_2017}  the
stabilizers of two vertices of  $e$ do not generate a pro-$p$
group. It follows that a maximal connected subgraph $D$ of $T$ having non-trivial stabilizers in $G$ of all its edges has diameter at most $k$.

 We shall prove that the same is true in $G_U$; we use the subscript $U$ for images in $T/\tilde U$. Consider $1\neq g_U\in G_U$ that stabilizes a vertex in $T_U$.
 It suffices to  show that a maximal connected subgraph containing $v_U$ such that all edge stabilizers of it are non-trivial is contained in $D_U$. Let $e_U$ be an edge not in $D_U$ having a vertex $w_U\in D_U$. Let $g\in G$, stabilizing a vertex $v\in T$, be such that $g\tilde U/\tilde U=g_U$, and choose $D$ with $v\in D$.  Let $w$ be a vertex of $D$ whose image in $T_U$ is $w_U$. Then $G_w\neq \{1\}$ and there
exists an edge $e$ incident to $w$ whose image in $T_U$ is $e_U$.
But $G_e$ is trivial and so $(G_U)_{e_U}$ is trivial. Thus all edges not in $D_U$ with vertices in $D_U$ have trivial stabilizers in $G_U$, as required.
\end{proof}

Proposition \ref{virtfreeacylindrical} and Lemma \ref{acylindrical quotient}  enable us to generalize \cite[Theorem 11.1]{wilton_distinguishing_2017} from the 1-acylindrical case to the k-acylindrical case, for any natural number $k$.

\begin{theorem}\label{general acylindrical} Let $G$ be a finitely generated pro-$p$
 group acting $k$-acylindrically on a profinite tree $T$. Then $G$ is a free pro-$p$ product of vertex stabilizers and a free pro-$p$ group.\end{theorem}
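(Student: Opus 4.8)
The plan is to re-run the construction from the proof of Theorem~\ref{pro-pbass-serre}, but feeding in at each stage the particularly simple description of the finite-level quotients supplied by Proposition~\ref{virtfreeacylindrical}. Say $G$ is generated by $d$ elements, and fix a linearly ordered basis $\B$ of open normal subgroups of $G$ as in that proof. For $U\in\B$ let $\tilde U$ be the subgroup of $U$ generated by the intersections of $U$ with the vertex stabilizers; then $G_U=G/\tilde U$ is virtually free pro-$p$ and acts on the profinite tree $T_U=T/\tilde U$, with the open subgroup $U/\tilde U$ acting freely. By Lemma~\ref{acylindrical quotient} this action of $G_U$ on $T_U$ is again $k$-acylindrical, so Proposition~\ref{virtfreeacylindrical} applies and $G_U$ is a free pro-$p$ product $H_{1,U}\amalg\dots\amalg H_{n_U,U}\amalg F_{r_U}$ of finitely many nontrivial finite $p$-groups $H_{i,U}$ and a free pro-$p$ group $F_{r_U}$ of finite rank.

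The first point to record is a uniform bound. Since $G_U$ is a $d$-generated pro-$p$ group and the minimal number of generators is additive over free pro-$p$ products, we get $n_U+r_U\le d$. Now $G_U$ is the fundamental pro-$p$ group of the evident finite graph of finite $p$-groups whose underlying graph is a star: a central vertex carrying the trivial group, one leaf carrying $H_{i,U}$ for each $i$, and $r_U$ loops at the centre, with all edge groups trivial. Collapsing the fictitious edges (Remark~\ref{reduction}) leaves a reduced graph $\overline\Gamma_U$ of finite $p$-groups, still with all edge groups trivial, having at most $d$ vertices and at most $d$ edges.

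Next I would assemble these into an inverse system exactly as in the proof of Theorem~\ref{pro-pbass-serre}: for $V\subseteq U$ in $\B$ the canonical projection $G/\tilde V\to G/\tilde U$ is induced by a morphism of reduced graphs of finite $p$-groups which on the underlying graphs collapses fictitious edges, and---using the finiteness of the set of reduced graphs obtainable from a given one, together with the linear ordering of $\B$---one obtains a coherent choice $(\overline\G_U,\overline\Gamma_U)$. Since all the $\overline\Gamma_U$ are uniformly bounded, Corollary~\ref{bound} shows that $\Gamma=\varprojlim_U\overline\Gamma_U$ is finite and $G=\Pi_1(\overline\G,\Gamma)$. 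By Theorem~\ref{pro-pbass-serre} each vertex group $\overline\G(v)$ is the stabilizer in $G$ of a vertex of $T$, while each edge group of $(\overline\G,\Gamma)$ is an inverse limit of trivial groups and hence trivial. Finally, a finite graph of pro-$p$ groups with trivial edge groups has fundamental pro-$p$ group equal to the free pro-$p$ product of its vertex groups with a free pro-$p$ group of rank the first Betti number of the graph; applied here this gives that $G$ is the free pro-$p$ product of vertex stabilizers of $T$ and a free pro-$p$ group, as required.

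I expect the main obstacle to be the construction of this coherent inverse system of decompositions with uniformly bounded underlying graphs: one must check that the somewhat delicate bookkeeping in the proof of Theorem~\ref{pro-pbass-serre} still works with the free-product decompositions of the $G_U$ in place of the ones coming from \cite{herfort_virtually_2013}, and in particular that the edge groups can be kept trivial through all of the reductions and bonding maps. Once this is in place, the bound $n_U+r_U\le d$ makes Corollary~\ref{bound} immediately applicable and the remainder is formal.
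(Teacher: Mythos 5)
Your first half coincides with the paper's own proof: both arguments use the quotients $G_n=G/\widetilde{\Phi^n(G)}$ (your $G_U$), Lemma \ref{acylindrical quotient} to keep the induced action $k$-acylindrical, and Proposition \ref{virtfreeacylindrical} to write each $G_n$ as a free pro-$p$ product of finite $p$-groups and a free pro-$p$ group, with the number of factors bounded by $d(G)$. Where you diverge is in the assembly of these finite-level decompositions. The paper does \emph{not} route this through the graph-of-groups inverse system of Theorem \ref{pro-pbass-serre} and Corollary \ref{bound}; it makes the free-product decompositions compatible directly: by \cite[Theorem 4.2]{ribes_pro-p_2000} every finite subgroup of a free pro-$p$ product is conjugate into a free factor, so the finite free factors of $G_{m+1}$ map onto those of $G_m$ up to conjugacy; since replacing a free factor by a conjugate does not change the decomposition, one can choose the factors $G_{m+1,v}$ inductively so that their images are exactly the $G_{m,v}$, and then \cite[Lemma 9.1.5]{ribes_profinite_2010} gives $G=\coprod_{v}G_v\amalg F_0$ with $G_v=\varprojlim_m G_{m,v}$. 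The identification of $G_v$ as a vertex stabilizer then comes from the non-emptiness of the inverse limit of the fixed-point sets $T^{G_{m,v}}$.

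The obstacle you flag at the end is therefore exactly where your version is incomplete, and it is not merely bookkeeping: Corollary \ref{bound} and the finiteness of the sets $\Omega_U$ via \cite[Corollary 3.3]{weigel_virtually_2017} are formulated for the reduced decompositions derived from \cite{herfort_virtually_2013}, whereas you would need to show that the pushforward of your level-$V$ star decomposition to level $U$, after reduction, lands in a fixed finite set of trivial-edge-group decompositions of $G_U$, compatibly along the linearly ordered basis $\B$. This can be repaired, but the natural tool for repairing it is precisely the conjugacy of finite subgroups into free factors --- at which point one may as well discard the graphs of groups and run the paper's direct argument. So: right strategy and the same two key lemmas, but the limiting step should be carried out with \cite[Theorem 4.2]{ribes_pro-p_2000} and \cite[Lemma 9.1.5]{ribes_profinite_2010} rather than with Corollary \ref{bound}. (One small further point: at the end you assert via Theorem \ref{pro-pbass-serre} that the vertex groups are vertex stabilizers; in a rebuilt construction this too must be re-verified, which the paper does by the compactness argument on $\varprojlim_m T^{G_{m,v}}$.)
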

 
 \begin{proof} Since $G$ is finitely generated, its Frattini series
$\Phi^n(G)$ is a fundamental system of neighbourhoods of 1. Let
$\widetilde{\Phi^n(G)}$ be  the subgroup of $\Phi^n(G)$ generated
be all vertex stabilizers of $\Phi^n(G)$. By Lemma \ref{acylindrical quotient},
$G_n=G/\widetilde{\Phi^n(G)}$  acts $k$-acylindrically  on a profinite tree
$T_n=T/\widetilde{\Phi^n(G)}$ and so by  Proposition \ref{virtfreeacylindrical}  is a free
pro-$p$ product
$$G_n=G/\widetilde{\Phi^n(G)}=\left(\coprod_{v} G_{nv}\right)\amalg F_{0n},$$
 of finite $p$ groups $G_{nv}$  and a free group  $F_{0n}$ in
$G_n$.

 Since $G=\lim\limits_{\displaystyle\longleftarrow} G_n$ and $G$ is
finitely generated, by choosing $n$ large enough we may assume that
the number of free factors is the same for every $m>n$, i.e.\ $v$
ranges over a finite set $V$. By
\cite[Theorem 4.2]{ribes_pro-p_2000} every finite subgroup of a free pro-$p$ product is
conjugate to a subgroup of a free factor. Therefore, the free
factors of $G_{m+1}$ are mapped onto the free factors of $G_m$ up
to conjugation. But in a free pro-$p$ product decomposition
replacing  any free factor by its conjugate does not change the
group. So, starting from $n$, we can inductively choose
$G_{m+1v}$ in such a way that its image in $G_m$ is $G_{mv}$. Let
$G_v$ be the inverse limit of $G_{mv}$. Then by
\cite[Lemma 9.1.5]{ribes_profinite_2010} $G= \coprod_{v\in V} G_v \amalg F_0$.

It remains to observe that $G_{mv}$ is a stabilizer of a vertex in
$T_m$ so the set of fixed points $T^{G_{mv}}$ is non-empty and
closed \cite[Theorem 3.7]{ribes_pro-p_2000}. Therefore, $G_v$
is the stabilizer of a non-empty set of vertices
$\lim\limits_{\displaystyle\longleftarrow} T^{G_{vm}} $. This
finishes the proof.  \end{proof}

We shall need these auxiliary lemmas in Section 4.

\begin{lemma} \label{pair edge stabilizers}
Let $H$ be a pro-$p$ group acting on a profinite tree $T$ with compact non-empty set of edges   having only finitely many maximal vertex stabilizers up to conjugation. Then $H$ acts on a profinite tree $D$ with non-empty set of edges such that, for every edge $e$ of $D$, the edge stabilizer $H_e$ fixes at least two edges in the original tree $T$.
\end{lemma}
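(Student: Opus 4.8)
The plan is to construct $D$ by collapsing, inside $T$, all edges whose stabilizers are "too small" and then passing to a suitable minimal invariant subtree. More precisely, call an edge $e$ of $T$ \emph{large} if $H_e$ fixes at least two edges of $T$, and \emph{small} otherwise; equivalently, $e$ is small if $H_e$ fixes exactly the edge $e$ (if $H_e$ fixed a vertex $v\notin\{d_0(e),d_1(e)\}$ it would fix the whole geodesic from $v$ into $e$, hence a second edge, so a small edge stabilizer fixes only $e$). The set of small edges is $H$-invariant, and I would like to form the quotient pro-$p$ tree obtained by collapsing the closure of the union of the $H$-orbits of small edges. For this to make sense I need that set to be a closed $H$-invariant union of connected subgraphs (so that collapsing yields a profinite graph), and that collapsing preserves the pro-$p$-tree property; the latter follows from the standard collapsing results for profinite/pro-$p$ trees (as used for $T/\tilde U$ in the excerpt, e.g.\ \cite[Prop.~2.5]{zalesskii_subgroups_1988}), provided the collapsed subgraphs are pro-$p$ subtrees.

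The first real step is therefore to show that the connected components of the "small part" of $T$ are bounded, indeed have bounded diameter, using the hypotheses. Here is where I expect the main obstacle. Because $H$ has only finitely many maximal vertex stabilizers up to conjugation, and because of the structure of vertex and edge stabilizers in a pro-$p$ tree, a connected subgraph all of whose edges are small cannot be too large: along such a subgraph the edge stabilizers must coincide with one of the two adjacent vertex stabilizers (otherwise, as in the proof of Lemma \ref{acylindrical quotient} via \cite[Lemma 11.2]{wilton_distinguishing_2017}, the two vertex groups would not generate a pro-$p$ group), and a small edge stabilizer fixes only one edge, which forces the vertex stabilizers along a path of small edges to be nested and hence, by the finiteness-up-to-conjugacy hypothesis together with compactness of $E(T)$, to stabilize a bounded configuration. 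Making this precise — ruling out an unbounded nested chain or an unbounded "cluster" of small edges around a single vertex — is the technical heart of the argument, and I would prove it by a contradiction/compactness argument: an infinite ray of small edges would produce an infinite properly descending (or ascending) chain of compact stabilizers inside a fixed conjugacy class, which is impossible.

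Once the small part has bounded components (in particular each component is a pro-$p$ subtree of bounded diameter, hence compact), collapse each of them to a point; since $E(T)$ is compact and the collapsing is $H$-equivariant, the quotient $T'$ is again a profinite graph on which $H$ acts, and it is a pro-$p$ tree by the collapsing lemma. By construction every remaining edge of $T'$ is the image of a large edge $e$ of $T$, and its stabilizer is $H_e$ (collapsing does not enlarge edge stabilizers, and a large edge cannot lie in a small component, so it survives), which fixes at least two edges of $T$. If $T'$ has empty edge set, then $H$ already fixed a vertex of $T$ after collapsing, meaning $T$ itself was a union of small components — but then $E(T)\neq\emptyset$ together with the bound on small components gives a bound on $T$, and one checks directly (again as in Lemma \ref{acylindrical quotient}) that $E(T)$ nonempty forces the existence of a large edge, a contradiction; so $E(T')\neq\emptyset$. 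Finally, replace $T'$ by a minimal $H$-invariant pro-$p$ subtree $D$ (which has nonempty edge set since $H$ does not fix a vertex of $T'$), and observe that edge stabilizers in $D$ are still edge stabilizers $H_e$ of large edges $e$ of $T$, completing the proof. The only point requiring care beyond the boundedness step is checking that each small component is genuinely a \emph{subtree} (connected and closed) of $T$ so that the collapse is legitimate; this follows because a maximal connected subgraph of small edges is closed (its complement, the union of large edges and their endpoints together with edges outside, is a closed condition given compactness of $E(T)$) and connected subgraphs of pro-$p$ trees that are closed are pro-$p$ subtrees.
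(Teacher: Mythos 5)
Your overall strategy --- collapse the edges whose stabilizers fix only one edge of $T$ and check that what survives has the required property --- has the right shape, but the step you yourself flag as the technical heart is exactly where the proof is missing, and the argument you sketch for it does not work. You propose to rule out an unbounded component of ``small'' edges by producing ``an infinite properly descending (or ascending) chain of compact stabilizers inside a fixed conjugacy class, which is impossible.'' It is not impossible: a profinite group has plenty of infinite strictly descending chains of closed subgroups (e.g.\ the chain $p^n\Z_p$ in $\Z_p$), and nothing in the hypotheses prevents the stabilizers along a chain of small edges from forming one. Moreover, an ``infinite ray'' is not a meaningful notion inside a profinite tree, and the closedness of the set of small edges --- which you need for the collapse to produce a profinite graph --- is asserted rather than proved: ``$H_e$ fixes at least two edges of $T$'' is not an obviously closed condition on $e$. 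Note also that your argument never actually deploys the hypothesis that there are only finitely many maximal vertex stabilizers up to conjugation; it is mentioned but does no work.

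The paper's proof uses that hypothesis head-on and thereby avoids all of these issues. If $H_{v_1},\dots,H_{v_n}$ are the maximal vertex stabilizers up to conjugation, then the stabilizer $H_m$ of any edge or vertex $m$ is contained in some $H_{v_i}$ after translating $m$, hence fixes the geodesic $[m,v_i]$ pointwise, hence --- by compactness of $E(T)$ and \cite[Lemma 2.15]{zalesskii_subgroups_1988} --- fixes an edge incident to $v_i$. Consequently every edge outside the explicit closed $H$-invariant subgraph $\bigcup_i H\,\mathrm{star}(v_i)$ already has a stabilizer fixing two distinct edges. One then collapses the connected components of this subgraph (via the Proposition on p.~486 of \cite{zalesskii_open_1992}) and checks that those components are single stars, because the geodesic between distinct translates of the $v_i$ must be infinite: otherwise a minimal finite geodesic would exhibit, via \cite[Lemma 11.2]{wilton_distinguishing_2017}, a subgroup of $H$ that is not pro-$p$. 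That last pro-$p$ argument is what replaces your boundedness claim, and it is genuinely different from the chain condition you propose; without something like it, your proof does not close.
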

\begin{proof} Let  $H_{v_1}, \ldots H_{v_n}$ be maximal stabilizers of vertices in $T$ up to conjugation. Then  the  stabilizer $H_m$ of any edge or vertex $m\in T$ is contained in some of $H_{v_i}$ up to $H$-translation of $m$. So assuming $H_m\leq H_{v_i}$  we see that $H_m$ fixes the geodesic $[m,v_i]$.  Since $E(T)$ is compact,  $H_m$ stabilizes an edge incident to $v_i$ whenever $m\neq v_i$ by \cite[Lemma 2.15]{zalesskii_subgroups_1988}.  So  if $m\not\in star(v_i)$, the stabilizer $H_m$ fixes at least two distinct edges. Now  $\bigcup_{i=1}^n Hstar(v_i)$  is a profinite subgraph of $T$ and, collapsing all its connected components  in $T$, by the Proposition on page 486 of \cite{zalesskii_open_1992}, we get a profinite tree
$D$ on which $H$ acts with  edge stabilizers each stabilizing at least two distinct edges of $T$ (as was just explained). Finally note that the connected components of $\bigcup_{i=1}^n Hstar(v_i)$ are stars, since  the geodesic $[gv_i,hv_j]$ is infinite whenever $gv_i\neq hv_j$ and $g,h\in H$. Indeed, otherwise choose a geodesic $[gv_i,hv_j]$ of minimal length with $gv_i\neq hv_j$ for $g,h\in H$.  To arrive at a contradiction we show that the group $K$ generated by the stabilizers of $gv_i$ and $hv_j$ is not pro-$p$. Note that $B=K[gv_i,hv_j]$ is a profinite tree with $B/K$ finite. Since $[gv_i,hv_j]$ is minimal, the stabilizers of all internal vertices of the geodesic are equal to the stabilizers of their incident edges.  Thus collapsing all but one edges of this geodesic and their $K$ translates, we may assume that $B/K$ has just one edge. Now we can apply \cite[Lemma 11.2 ]{wilton_distinguishing_2017}  to deduce that $K$ is not pro-$p$ as required.
\end{proof}

\begin{lemma} \label{almost cyclic edge stabilizers}  Let $G$ be a profinite group acting on a profinite tree $T$ with compact set of edges such that
 any non-trivial subgroup $K$ of $G$ stabilizing two edges $e_1, e_2$ is at most procyclic.
Let $H$ be a finitely generated pro-$p$ subgroup of $G$. Then  $H$  is the fundamental pro-$p$
group $\Pi_1(\curlyH,\Delta)$ of a finite graph of finitely generated  pro-$p$ groups with edge
 groups that are stabilizers of some pairs of distinct edges of $T$. In particular, if the
action of $G$ on $T$ is $1$-acylindrical then $H$ splits as a free pro-$p$ product
$H=\coprod_{v\in V(\Delta)} \curlyH(v) \amalg F$, where $F$ is a free pro-$p$ group
intersecting trivially all conjugates of vertex groups in $H$. \end{lemma}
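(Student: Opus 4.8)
The plan is to manufacture a pro-$p$ tree on which $H$ acts with \emph{procyclic} edge stabilizers and then invoke Corollary~\ref{cyclic edge stabilizers}, the extra control on the edge groups coming from Lemma~\ref{pair edge stabilizers}. If $E(T)=\emptyset$ then $H$ fixes a vertex and there is nothing to prove, so assume $E(T)$ is compact and non-empty. The main step is to verify that $H$ has only finitely many maximal vertex stabilizers up to conjugation, which is the hypothesis needed to apply Lemma~\ref{pair edge stabilizers} and the one place finite generation of $H$ is used. To this end I would apply Theorem~\ref{pro-pbass-serre} to the action of $H$ on $T$, writing $H=\Pi_1(\overline\G,\Gamma)$ with $(\overline\G,\Gamma)=\varprojlim_U(\overline\G_U,\overline\Gamma_U)$ an inverse limit of finite graphs of finite $p$-groups. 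By \cite[Theorem~1.1]{herfort_virtually_2013} (and since reduction does not enlarge edge groups) each $\overline\G_U$, and hence $(\overline\G,\Gamma)$, has procyclic edge groups; so $H$ acts on the standard pro-$p$ tree $S$ of $(\overline\G,\Gamma)$ with procyclic edge stabilizers. Corollary~\ref{cyclic edge stabilizers} then gives a decomposition of $H$ over a \emph{finite} graph whose vertex groups are stabilizers of vertices of $S$; these are conjugates of the vertex groups of $(\overline\G,\Gamma)$, hence are themselves vertex stabilizers of $T$, and every vertex stabilizer of $T$ in $H$ is conjugate into one of them. Since there are only finitely many such vertex groups, $H$ has only finitely many conjugacy classes of maximal vertex stabilizers in $T$.

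Now Lemma~\ref{pair edge stabilizers} produces a pro-$p$ tree $D$ with non-empty edge set on which $H$ acts so that each edge stabilizer fixes two distinct edges of $T$; by hypothesis such a stabilizer is procyclic, so Corollary~\ref{cyclic edge stabilizers} applies to the action of $H$ on $D$ and gives $H=\Pi_1(\curlyH,\Delta)$ with $\Delta$ finite and with edge groups that are stabilizers of pairs of distinct edges of $T$, as claimed. To see that the $\curlyH(v)$ are finitely generated, pass to $H/\Phi(H)$ via the presentation~\eqref{presentation}: $H/\Phi(H)$ is a quotient of $\bigl(\bigoplus_v \curlyH(v)/\Phi(\curlyH(v))\bigr)\oplus\F_p^{\,m}$, where $m$ is the number of stable letters, modulo at most one relation for each edge of $\Delta$ (each edge group being procyclic); since $\Delta$ is finite this forces $\sum_v\dim_{\F_p}\curlyH(v)/\Phi(\curlyH(v))<\infty$, so every $\curlyH(v)$ is finitely generated.

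For the final assertion, suppose the action of $G$ on $T$ is $1$-acylindrical. A non-trivial element of $G$ fixing two distinct edges of $T$ fixes the subtree spanned by them, which has diameter at least $2$, contradicting $1$-acylindricity; hence every edge group of $\Delta$ is trivial, and a finite graph of pro-$p$ groups with trivial edge groups has fundamental pro-$p$ group the free pro-$p$ product of its vertex groups with a free pro-$p$ group $F$ (of rank the first Betti number of $\Delta$) meeting every conjugate of a vertex group trivially. I expect the first step to be the crux: squeezing the finiteness of the set of conjugacy classes of maximal vertex stabilizers out of Theorem~\ref{pro-pbass-serre} and Corollary~\ref{cyclic edge stabilizers}, and verifying that the finitely many vertex groups one ends up with really are stabilizers of vertices of the original tree $T$.
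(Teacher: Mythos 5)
Your overall architecture tracks the paper's: establish that $H$ has only finitely many maximal vertex stabilizers in $T$ up to conjugation, feed this into Lemma~\ref{pair edge stabilizers} to obtain a tree $D$ whose edge stabilizers fix two distinct edges of $T$ (hence are procyclic by hypothesis), then apply the boundedness results to get a finite decomposition; your finite-generation argument via $H/\Phi(H)$ and your $1$-acylindricity endgame are also essentially the paper's. The problem is precisely the step you flag as the crux. You derive the finiteness from the claim that the Herfort--Zalesskii decompositions $(\overline \G_U,\overline \Gamma_U)$ of the virtually free quotients, and hence the limit $(\overline \G,\Gamma)$, have procyclic edge groups. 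Theorem~1.1 of \cite{herfort_virtually_2013} produces a finite graph of finite $p$-groups whose edge groups are \emph{arbitrary} finite $p$-groups, not cyclic ones; and, more decisively, by the ``moreover'' clause of Theorem~\ref{pro-pbass-serre} the edge groups of $(\overline \G,\Gamma)$ are stabilizers of single edges of $T$, whereas the hypothesis of the present lemma only constrains stabilizers of \emph{pairs} of distinct edges --- an individual edge of $T$ is perfectly allowed to have a non-procyclic stabilizer in $H$. So you cannot apply Corollary~\ref{cyclic edge stabilizers} to the action of $H$ on the standard tree $S$ of $(\overline \G,\Gamma)$, and your finiteness claim is unproved.

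The missing idea is how to handle the edges of $T$ whose $H$-stabilizer fails to be procyclic. The paper first observes, via \cite[Lemma 11.2]{wilton_distinguishing_2017}, that such an edge stabilizer must coincide with the stabilizer of one of its endpoints (otherwise the two vertex stabilizers would not generate a pro-$p$ group), so the subgraph of ``bad'' edges has connected components consisting of single edges. It then passes to the quotients $H_n=H/\widetilde{\Phi^n(H)}$ acting on $T_n$, where \cite[Lemma 8]{herfort_virtually_2008} (a virtually free pro-$p$ group has only finitely many finite subgroups up to conjugation) shows there are only finitely many bad edges up to translation; collapsing them yields a tree $D_n$ on which $H_n$ acts with cyclic edge stabilizers, and only at that point do Corollary~\ref{cyclic edge stabilizers} and the bound of \cite[Lemma 2.2]{snopce_subgroup_2014} enter to give a finite decomposition, uniform in $n$, from which the finiteness of the set of conjugacy classes of maximal vertex stabilizers follows. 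Some version of this collapsing argument is indispensable; without it your first step does not go through, and the remainder of your proof, though correct, has nothing to stand on.
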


 \begin{proof} 
 First note that a nontrivial  stabilizer of any edge $e$ coincides with one of its vertex stabilizers $H_v$ or $H_w$, say $H_w$,  since otherwise by  \cite[Lemma 11.2]{wilton_distinguishing_2017}  the stabilizers of two vertices of  $e$ do not generate a pro-$p$ group. This means that  connected components of the  abstract  subgraph whose edges have  non-cyclic $H$-stabilizers are at most edges.

Now since $H$ is finitely generated, its Frattini series
$\Phi^n(H)$ is a fundamental system of neighbourhoods of 1. Let
$\widetilde{\Phi^n(H)}$ be  the subgroup of $\Phi^n(H)$ generated
be all vertex stabilizers of $\Phi^n(H)$. Then
$H_n=H/\widetilde{\Phi^n(H)}$  acts on a profinite tree
$T_n=T/\widetilde{\Phi^n(H)}$ (cf.\   \cite[Proposition
2.5]{zalesskii_subgroups_1988}) and $\Phi^n(H)/\widetilde{\Phi^n(H)}$ acts freely on $T_n$ and
therefore is free pro-$p$  (see Theorem
2.6 \cite{zalesskii_subgroups_1988}). Note that the vertex and edge
stabilizers of $H_n$ acting on $T_n$ are finite epimorphic images
of the corresponding vertex and edge stabilizers of $H$ acting on
$T$ and so the images in $T_n$ of edges of $T$ with trivial edge
stabilizers  have  trivial stabilizers. Therefore, every abstract
connected component $\Omega_n$ of the subgraph of points of $T_n$ with
non-cyclic edge stabilizers in $H_n$  is exactly the image of  some abstract
connected components $\Omega$ of the subgraph of points of $T$ with
non-procyclic $H$-stabilizers in $T$. Indeed,
 suppose on the contrary there exists an edge $e_n\in \Omega_n$ not in the image of $\Omega$ but  having a vertex
$v_n$ in it. Let $v$ be a vertex of
$\Omega$ whose image in $T_n$ is $v_n$. Then $H_v$ is not procyclic and there
exists an edge $e$ incident to $v$ whose image in $T_n$ is $e_n$.
But $e\not\in \Omega$, so $H_e$ is procyclic and therefore $H_{e_n}$ is cyclic, contradicting the choice of $e_n$.

 By \cite[Lemma 8]{herfort_virtually_2008}, a virtually free pro-$p$ group has only finitely many finite subgroups up to conjugation. This means that $T_n$  has only finitely many edges $e_n$  up to translation with $(H_n)_{e_n}$ that are not cyclic. Hence the union of them is a profinite subgraph of $S$ and collapsing all its connected components  in $T_n$, by the
Proposition on page 486 in \cite{zalesskii_open_1992} we get a profinite tree
$D_n$ on which $H_n$ acts with cyclic edge stabilizers. Hence, by Corollary \ref{cyclic edge stabilizers},  $H_n$  is the fundamental pro-$p$ group of a
finite graph of finite $p$-groups $(\curlyH_n, \Gamma_n)$  with cyclic edge groups, where the vertex and edge groups of $(\curlyH_n, \Gamma)$  are stabilizers of
certain vertices and edges of $D_n$ respectively.  Moreover,  by collapsing edges that are not loops with a vertex group equal to the edge group  we may assume that $(\curlyH_n, \Gamma)$ is reduced (see Remark \ref{reduction}). Then by \cite[Lemma 2.2]{snopce_subgroup_2014}, the size of $\Gamma$ is limited in terms of the minimal number of generators $d(H)$ of $H$, i.e.\ is bounded independently of $n$.
Therefore, by Corollary \ref{bound}, $H$ is the fundamental group $\Pi_1(\curlyH,\Gamma)$ of a finite graph of pro-$p$ groups with cyclic edge stabilizers whose vertex and edge groups  are  stabilizers of certain vertices and edges of $T$. In particular,  it has only finitely many maximal stabilizers of vertices in $T$ up to conjugation. Then we can apply Lemma \ref{pair edge stabilizers} to deduce that $H$ acts on a profinite tree $D$ such that every edge stabilizer $H_e$, $e\in D$ fixes at least two edges in $T$. Now applying Corollary \ref{bound} again we deduce that $H$ is the fundamental group $\Pi_1(\curlyH,\Delta)$ of a finite graph of pro-$p$ groups satisfying the statement of the lemma.

Finite generation of the vertex groups $\curlyH(v)$  follow easily from the presentation of $\Pi_1(\curlyH,\Delta)$, the  finiteness of $\Delta$, and the fact that the edge groups $\curlyH(e)$ are procyclic.

To show the last statement, observe that in the case of a 1-acylindrical action the edge groups $\curlyH(e)$ of $(\curlyH, \Delta)$  are trivial (since each of them stabilizes a pair of distinct edges of $T$), so the result follows from the presentation (\ref{presentation}) of $\Pi_1(\curlyH, \Delta)$.
 \end{proof}

 We finish the section with two simple lemmas.

 \begin{lemma}\label{HNN}  Let $G=HNN(F(Y\cup X), C_i, t_i)$, $i=1,\ldots n$ be a pro-$p$ HNN-extension of a free pro-$p$ group $F=F(Y\cup X)$ on a finite set $Y\cup X$ such that each $C_i$  and $C_i^{t_i}$ are a conjugate of $\langle y\rangle$ in $F$  for some $y\in Y$. Then $G=\coprod_{y\in Y\setminus Z} (\langle y\rangle \times F(T_y)) \amalg F(Z\cup X)$ is a free pro-$p$ product, where $Z$ is a subset of $Y$ and $T_y$ is a  (possibly empty) subset of the set of stable letters $t_1,\ldots t_n$.\end{lemma}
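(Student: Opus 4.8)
The plan is to bypass profinite trees and argue directly from a pro-$p$ presentation of $G$, which we then simplify by a change of stable letters. Since each edge subgroup is procyclic, write $C_i=\langle a_i\rangle$ and $C_i^{t_i}=\langle b_i\rangle$ with topological generators $a_i,b_i\in F$. Specialising the presentation $(\ref{presentation})$ to the graph of pro-$p$ groups with a single vertex carrying $F=F(Y\cup X)$ and $n$ loop edges $f_1,\ldots,f_n$ (so that its maximal subtree is the vertex alone), we obtain
$$G=\langle\, Y\cup X,\ t_1,\ldots,t_n \ \mid\ b_i = t_i^{-1}a_it_i,\ i=1,\ldots,n \,\rangle$$
as a pro-$p$ group. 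By hypothesis each of $C_i$ and $C_i^{t_i}$ is a conjugate of $\langle y\rangle$ for a single $y=y(i)\in Y$; choosing the generators $a_i,b_i$ compatibly with this, we may take $a_i=g_i^{-1}\,y(i)\,g_i$ and $b_i=h_i^{-1}\,y(i)\,h_i$ for suitable $g_i,h_i\in F$.

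The key step is then a Tietze change of generators. Replace $t_i$ by $s_i:=g_i t_i h_i^{-1}$; this is legitimate since $t_i=g_i^{-1}s_i h_i$, so $Y\cup X\cup\{s_1,\ldots,s_n\}$ is again a topological generating set of $G$. Under this substitution the $i$-th relation $b_i=t_i^{-1}a_it_i$ becomes $h_i^{-1}y(i)h_i = (g_it_i)^{-1}\,y(i)\,(g_it_i) = (s_ih_i)^{-1}\,y(i)\,(s_ih_i)$, that is, $[\,y(i),\,s_i\,]=1$. Hence
$$G=\langle\, Y\cup X,\ s_1,\ldots,s_n \ \mid\ [\,y(i),\,s_i\,]=1,\ i=1,\ldots,n \,\rangle$$
as a pro-$p$ group.

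Finally I would read off the decomposition. Distinct elements of $Y$ are non-conjugate in the free pro-$p$ group $F$, so the assignment $i\mapsto y(i)$ is well defined; put $T_y=\{s_i: y(i)=y\}$ for $y\in Y$ and $Z=\{y\in Y: T_y=\emptyset\}$. In the last presentation the generators split into the pairwise disjoint blocks $\{y\}\cup T_y$ for $y\in Y\setminus Z$, $\{y\}$ for $y\in Z$, and $\{x\}$ for $x\in X$, and each relator lies inside a single block $\{y\}\cup T_y$. Juxtaposing the pro-$p$ presentations of the blocks therefore realises $G$, via the universal property of free pro-$p$ products, as
$$G=\coprod_{y\in Y\setminus Z}\bigl(\langle y\rangle\times F(T_y)\bigr)\ \amalg\ F(Z\cup X),$$
which, after identifying $s_i$ with $t_i$, is the asserted free pro-$p$ product (the factor is $\langle y\rangle\times F(T_y)$ because the block $\{y\}\cup T_y$ carries only the relations $[y,s]=1$, $s\in T_y$).

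The step I expect to be the real obstacle is the choice ``$a_i=g_i^{-1}y(i)g_i$, $b_i=h_i^{-1}y(i)h_i$''. One must use that the hypothesis lets us take $C_i$ and $C_i^{t_i}$ to be conjugates of the very same $y(i)$, with the HNN gluing matching these generators and not introducing a unit exponent: if instead the relation forced $b_i=(h_i^{-1}y(i)h_i)^{v}$ with $v\in\Z_p^\times\setminus\{1\}$, it would collapse only to $s_i^{-1}y(i)s_i=y(i)^{v}$, and the corresponding free factor would be the non-abelian pro-$p$ group $\Z_p\rtimes\Z_p$ rather than $\langle y\rangle\times F(T_y)$. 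Granting that (the substance of the phrase ``$C_i$ and $C_i^{t_i}$ are a conjugate of $\langle y\rangle$''), the remainder — validity of the Tietze moves and recognition of a free pro-$p$ product from a block presentation — is routine in the pro-$p$ category.
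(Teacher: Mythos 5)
Your Tietze-move computation reproduces, almost verbatim, the second half of the paper's own argument: the paper likewise replaces $t_i$ by a product of the form $t_if_i^{-1}$ (your $s_i=g_it_ih_i^{-1}$) so that the stable letter normalizes a basis element of $Y$, and then reads the free pro-$p$ product off the presentation (\ref{presentation}). The genuine gap is the case you assume away at the outset. The hypothesis is to be read distributively, as the paper's proof makes explicit: $C_i$ is a conjugate of $\langle y_1\rangle$ and $C_i^{t_i}$ a conjugate of $\langle y_2\rangle$ for possibly \emph{different} $y_1,y_2\in Y$. Your sentence ``the assignment $i\mapsto y(i)$ is well defined'' silently excludes this possibility, and nothing in your argument covers it: if the relation reads $y_1^{f_1t_i}=y_2^{f_2}$ with $y_1\neq y_2$, no change of stable letter turns it into a commutator, and the corresponding block is not of the form $\langle y\rangle\times F(T_y)$. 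The paper handles this by induction on $|Y|$: one solves for $y_2$ as a conjugate of $y_1$ by $f_1t_if_2^{-1}$, deletes $y_2$ from $Y$, adjoins $f_1t_if_2^{-1}$ to $X$, and rewrites $G$ as an HNN-extension of the larger free pro-$p$ group $F\bigl(Y\setminus\{y_2\}\cup X\cup\{f_1t_if_2^{-1}\}\bigr)$ with one fewer stable letter, to which the inductive hypothesis applies (any $C_j$ conjugate to $\langle y_2\rangle$ is then conjugate to $\langle y_1\rangle$ in the new vertex group). You need this reduction before your block analysis can start.

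On the exponent issue you flag: your worry is legitimate, but ``granting that'' is not a resolution. After arranging that the new stable letter normalizes $\langle y\rangle$, conjugation could a priori act on $\langle y\rangle\cong\Z_p$ by a unit in $1+p\Z_p$ different from $1$, producing a factor $\Z_p\rtimes\Z_p$ rather than $\langle y\rangle\times F(T_y)$. The paper passes over this point just as quickly (it asserts that $t_if_i^{-1}$ \emph{centralizes} $C_i$ when it has only arranged that it normalizes $C_i$), so here you are no worse off than the source; but it remains an unproved step in your write-up, and it is secondary to the missing inductive case above.
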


  \begin{proof} We use induction on the cardinality of $Y$. If there exists $t_i$ such that $y_1^{f_1t_i}=y_2^{f_2}$ for some $y_1\neq y_2$ and $f_1,f_2\in F$ then we can rewrite $G$ as $G=HNN(F(Y\setminus{y_2} \cup (X\cup f_1t_if_2^{-1}), C_j, t_j)$, $j=1,\ldots i-1, i+1\ldots, n$ and the result follows from the inductive hypothesis.

 Thus we may assume that $C_i^{f_i}=C_i^{t_i}$ for some $f_i\in F$ for all $i$. Then replacing $t_i$ with $t_if_i^{-1}$ we may assume that $t_i$ centralizes $C_i$ for every $i$.  Moreover, conjugating $C_i$ in $F$, we may assume that every $C_i$ is generated by an element of $Y$. Now the result follows from the presentation  $G=HNN(F(Y\cup X), C_i, t_i)$, $i=1,\ldots n$.
 \end{proof}

 \begin{lemma}\label{over free factor}
 \begin{enumerate}

 \item[(i)] Let $H=H_1\amalg_{H_0} H_2$  be a pro-$p$ free amalgamated product  over a procyclic group $H_0$. Suppose $C_H(H_0)$ is abelian of rank at most 2 and $H_i=C_H(H_0)\amalg L_i$. Then $H=C_H(H_0)\amalg L_1\amalg L_2$.

 \item[(ii)] Let $H=HNN(H_1,H_0, t)$ be a pro-$p$  HNN-extension over a procyclic subgroup $H_0$. Suppose $C_H(H_0)$ is abelian of rank at most 2, $H_1=C_H(H_0)\amalg L_1$ and $H_0^t$ is  conjugate in $H_1$ either  to $H_0$ or to a subgroup of $L_1$. Then either $H=(\Z_p\times H_0)\amalg L_1$ or $H=\Z_p\amalg L_1$.
\end{enumerate}
  \end{lemma}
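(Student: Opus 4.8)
The plan is to reduce everything to the presentation~(\ref{presentation}) of a fundamental pro-$p$ group and use the fact that $H_0$ is procyclic with $C_H(H_0)$ abelian of rank $\le 2$ to rewrite the amalgamation/conjugation relations as ones living entirely inside $C_H(H_0)$. I would treat part~(i) first. Write $H_0 = \langle c \rangle$ with $c$ a generator (or topological generator) of the procyclic group. Since $H_0 \le C_H(H_0)$ and $C_H(H_0)$ is abelian, and since $H_i = C_H(H_0) \amalg L_i$, the inclusion $\partial_i \colon H_0 \to H_i$ factors through the free factor $C_H(H_0)$. So in the presentation of $H = H_1 \amalg_{H_0} H_2$ — which is $\langle H_1, H_2 \mid \mathrm{rel}(H_1), \mathrm{rel}(H_2), \partial_1(g) = \partial_2(g),\ g \in H_0 \rangle$ — the amalgamation relations only identify the copy of $c$ coming from $C_H(H_0) \le H_1$ with the copy coming from $C_H(H_0) \le H_2$. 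But those two copies are literally the \emph{same} subgroup $C_H(H_0)$ of $H$ (this is where one should invoke that the amalgam is \emph{proper}, so $C_H(H_0)$ embeds and the identification is the obvious one — or, more carefully, that $C_H(H_0)$ is a common free factor and the two embeddings of $H_0$ into it agree because both are the centralizer-respecting inclusion). Hence the amalgamation relations are redundant, and collapsing them transforms $(\curlyG, \Gamma)$ into the graph of pro-$p$ groups with a single vertex $C_H(H_0)$ wedged with $L_1$ and $L_2$ via trivial edge groups, i.e.\ $H = C_H(H_0) \amalg L_1 \amalg L_2$, by Remark~\ref{reduction}.

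For part~(ii), I would do the same bookkeeping for the HNN presentation $H = \langle H_1, t \mid \mathrm{rel}(H_1),\ H_0^t = \partial(H_0) \rangle$. There are two cases according to the hypothesis on $H_0^t$. If $H_0^t$ is conjugate in $H_1$ to a subgroup of $L_1$, then — since $H_0$ itself is conjugate into $C_H(H_0)$ — the stable letter $t$ is conjugating a procyclic subgroup of $C_H(H_0)$ to a procyclic subgroup of $L_1$; after adjusting $t$ by an element of $H_1$ (exactly as in Lemma~\ref{HNN}), we may assume $t$ conjugates a generator of $H_0$ into $L_1$, and then the HNN relation expresses this single conjugation, which in a free pro-$p$ product with stable letter $t$ just adds a $\Z_p = \langle t \rangle$ free factor if and only if the conjugation can be absorbed; the correct outcome here is $H = \Z_p \amalg L_1$ once one checks that $t$ can in fact be freely adjoined (using that $H_0^t$ lands in the free factor $L_1$ complementary to $C_H(H_0)$, so no new relation among the old generators survives). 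If instead $H_0^t$ is conjugate in $H_1$ to $H_0$, then after replacing $t$ by $tf^{-1}$ we may assume $t$ centralizes $H_0$; thus $t \in C_H(H_0)$-in-spirit, and $\langle C_H(H_0), t\rangle$ is abelian generated by the rank-$\le 2$ group $C_H(H_0)$ together with a commuting $t$. Since $H_0 = \langle c \rangle$ is procyclic and $C_H(H_0)$ has rank $\le 2$, the subgroup $\langle H_0, t\rangle$ is free abelian of rank $2$, namely $\Z_p \times H_0$; peeling this off as a free factor (again via Remark~\ref{reduction}, collapsing the now-redundant edge relation into the vertex) gives $H = (\Z_p \times H_0) \amalg L_1$.

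The main obstacle I anticipate is the step where I claim the two embeddings of $H_0$ (one from each vertex group in case~(i), or the domain-and-image in case~(ii)) can be \emph{simultaneously} arranged to sit inside the distinguished free factor $C_H(H_0)$ in a compatible way — equivalently, that after the Lemma~\ref{HNN}-style adjustment of the stable letter / amalgamating isomorphism, the relation genuinely becomes redundant rather than merely moving the difficulty elsewhere. Concretely: a priori $\partial_1(H_0)$ and $\partial_2(H_0)$ are each procyclic subgroups of $C_H(H_0) \cong \Z_p$ or $\Z_p^2$, but they might be \emph{different} subgroups (say $\langle c\rangle$ versus $\langle c^\lambda \rangle$ for a unit or non-unit $\lambda \in \Z_p$); one must argue that properness of the splitting forces them to be equal, or else handle the discrepancy. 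I expect this is where the hypothesis ``$C_H(H_0)$ is abelian of rank at most $2$'' is really used — it is exactly restrictive enough that the centralizer condition pins down the two copies of $H_0$ up to an automorphism of $C_H(H_0)$ that can be absorbed into the presentation, so that after absorption the edge relation is eliminable and Remark~\ref{reduction} applies. Once that point is settled, the rest is the routine collapse-of-fictitious-edges bookkeeping.
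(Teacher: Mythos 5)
Your part~(i) has a genuine gap, and it is precisely the one you flag in your final paragraph --- but your proposed resolution points in the wrong direction. The claim that the two copies of $C_H(H_0)$, one inside $H_1$ and one inside $H_2$, ``are literally the same subgroup'' cannot be what is going on: a subgroup of $H=H_1\amalg_{H_0}H_2$ contained in both vertex groups lies in $H_1\cap H_2=H_0$, so on that reading the lemma is vacuous. In the intended (non-degenerate) reading, the abelian free factors of $H_1$ and of $H_2$ containing $H_0$ are the centralizers $C_{H_1}(H_0)$ and $C_{H_2}(H_0)$, which are in general \emph{different} subgroups of $H$ meeting exactly in $H_0$; the edge relation identifying the two copies of $H_0$ is therefore not ``redundant,'' and no collapse of fictitious edges applies (the edge maps $H_0\to H_i$ are not isomorphisms). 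The actual mechanism, which is the entire content of the lemma, is that the hypothesis forces $H_0$ to be \emph{self-centralized in at least one of} $H_1,H_2$: if both $C_{H_1}(H_0)$ and $C_{H_2}(H_0)$ properly contained $H_0$, then $C_H(H_0)$ would contain the proper amalgam $C_{H_1}(H_0)\amalg_{H_0}C_{H_2}(H_0)$, which is not abelian of rank at most $2$. Say $C_{H_1}(H_0)=H_0$; then $H_0$ is itself a free factor of $H_1=H_0\amalg L_1$, and amalgamating over a free factor of one side unfolds into the free product $H=L_1\amalg H_2=C_H(H_0)\amalg L_1\amalg L_2$. So ``properness forces the two embeddings to be equal'' is not the fix; one of the two centralizers must degenerate to $H_0$.

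The same omission infects the first case of your part~(ii). If $H_0^t$ is conjugated into $L_1$, refining the splitting gives $H=\bigl(C_{H_1}(H_0)\amalg_{H_0}L_1\bigr)\amalg \Z_p$, and to reach the stated conclusion $\Z_p\amalg L_1$ you need $C_{H_1}(H_0)=H_0$; your ``once one checks that $t$ can in fact be freely adjoined'' is exactly the step that needs the rank argument above, not a routine verification. Your second case of (ii) (adjusting $t$ by $h_1$ so that $th_1^{-1}$ commutes with $H_0$ and splits off $\langle th_1^{-1}\rangle\times H_0$) does agree with the paper's argument. But as it stands, part~(i) and the first case of part~(ii) assert the conclusion rather than prove it, and the hypothesis ``$C_H(H_0)$ is abelian of rank at most $2$'' is never actually brought to bear in the way that makes the lemma true.
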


 \begin{proof} (i) Since $C_H(H_0)$ is abelian of rank at most 2, either $H_0$ is self-centralized in $H_1$ or it is self-centralized in $H_2$; say $H_0$ is self-centralized in $H_1$. Then $H=L_1\amalg H_2$  and the result follows.

 \medskip
 (ii)  If $H_0$ is  conjugate to  a subgroup of $L_1$ then  $H=L_1\amalg \langle t\rangle$. If   $H_0^t=H_0^{h_1}$ for some $h_1\in H_1$ then $H=L_1\coprod (\langle t h_1^{-1} \rangle \times H_0)$ so we are done as well.  \end{proof}

\section{Pro-p subgroups of profinite completions of 3-manifold groups}

\subsection{Reduction to the geometric cases}

Let $M$ be a compact, orientable, non-geometric 3-manifold.   Since every compact manifold is a retract of a closed manifold, it follows easily that we may reduce to the case in which $M$ is closed.  If $M$ is reducible then its Kneser--Milnor decomposition is non-trivial.  In particular, $\pi_1M$ acts on a tree with trivial edge stabilizers (i.e.\ 0-acylindrically), and with vertex stabilizers  exactly the conjugates of the reducible factors of $M$; such an action is 0-acylindrical.   Since 3-manifold groups are residually finite,  $\wh{\pi_1M}$ acts 0-acylindrically on a profinite tree, with vertex stabilizers the conjugates of the completions of the irreducible factors. Therefore, by Theorem \ref{general acylindrical}, we may reduce to the case where $M$ is irreducible.

If $M$ is non-geometric, we now argue similarly using the JSJ decomposition.  The action of an irreducible, non-geometric 3-manifold on its JSJ tree is 4-acylindrical \cite[Lemma 2.4]{wilton_profinite_2010}. The same is true for the action of the profinite completion on the profinite tree, as follows essentially from the results of \cite{wilton_profinite_2010} and \cite{hamilton_separability_2013}.  For completeness and ease of reference, we state the result here.

\begin{proposition}\label{prop: Profinite 4-acylindrical}
Let $M$ be a closed, irreducible 3-manifold.  Then the action of $\wh{\pi_1M}$ on the profinite tree associated to the JSJ decomposition of $M$ is 4-acylindrical.
\end{proposition}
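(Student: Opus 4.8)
The plan is to transfer the known 4-acylindricity of the discrete action $\pi_1M \curvearrowright T_{JSJ}$ to the profinite completion acting on the profinite JSJ tree $\widehat T = S(\widehat{\pi_1M})$. The first step is to record that the JSJ decomposition of $M$ gives a finite graph of groups $(\mathcal{G},\Gamma)$ whose edge groups are $\mathbb{Z}^2$ (the peripheral tori) and whose vertex groups are either Seifert-fibred pieces or cusped hyperbolic pieces, and that $\pi_1M$ is subgroup separable in the relevant sense needed for Example \ref{graph group completion} to apply — i.e. the edge subgroups are closed in the adjacent vertex groups in the profinite topology on $\pi_1M$. This is exactly the content of \cite{wilton_profinite_2010} together with \cite{hamilton_separability_2013} (efficiency of the JSJ decomposition): the conclusion is that the abstract Bass--Serre tree $T_{JSJ}$ embeds $\pi_1M$-equivariantly into $\widehat T$, that $\widehat T$ is the standard profinite tree of $(\widehat{\mathcal{G}},\Gamma)$, and that $\widehat T / \widehat{\pi_1M} = \Gamma = T_{JSJ}/\pi_1M$; moreover the stabilizers in $\widehat{\pi_1M}$ of vertices and edges of $\widehat T$ are precisely the conjugates of the closures $\overline{\mathcal{G}(v)}$, $\overline{\mathcal{G}(e)}$.

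The second step is the acylindricity estimate itself. Suppose $1\ne \gamma\in\widehat{\pi_1M}$ fixes a segment $[u,w]$ of $\widehat T$ of length $5$. The intermediate vertex stabilizers along this segment then all contain $\gamma$, and by the structure of JSJ edge groups each such stabilizer that also stabilizes an incident edge is abelian (contained in a conjugate of a peripheral $\widehat{\mathbb{Z}^2}$) unless it is a Seifert piece. The key algebraic input is the standard fact — available profinitely from \cite{wilton_profinite_2010} — that in the profinite JSJ graph of groups no two distinct peripheral subgroups of the same geometric vertex group are conjugate, that a peripheral subgroup of a hyperbolic piece is malnormal in that piece up to the commensurator being the peripheral subgroup itself, and analogously for Seifert pieces one controls the centraliser of the fibre. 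Walking $\gamma$ along a hypothetical length-$5$ segment forces $\gamma$ to lie in a chain of peripheral/Seifert subgroups long enough to violate one of these constraints, exactly as in the proof of \cite[Lemma 2.4]{wilton_profinite_2010}; the numerology ``$4$'' comes from the worst case of a Seifert piece glued to a Seifert piece along a torus, where one can push distance $2$ into each side plus the edge.

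The cleanest way to organise the second step, and the one I would actually write, is a reduction to the discrete case rather than a re-run of the combinatorics. Because $\widehat T / \widehat{\pi_1M} = \Gamma$ is \emph{finite}, a fixed segment of length $\ge 5$ would, after an element of $\widehat{\pi_1M}$, have all its vertices and edges lying over a bounded part of $\Gamma$; choosing a finite quotient $Q$ of $\widehat{\pi_1M}$ through which the (finitely many) relevant edge- and vertex-stabilizer data inject faithfully, one gets a corresponding configuration in the finite graph of finite groups $T_{JSJ}/\pi_1M$-picture, where \cite[Lemma 2.4]{wilton_profinite_2010} applies verbatim and yields a contradiction. The main obstacle — and the point that needs the most care — is precisely this transfer: one must know that the \emph{intersection pattern} of conjugates of edge and vertex groups along a segment is already visible in a finite quotient, i.e. that the relevant peripheral subgroups are separable and pairwise ``distinguished'' in $\pi_1M$, which is where \cite{hamilton_separability_2013} and the efficiency of the JSJ are essential. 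Everything else is bookkeeping: identifying $\widehat T$ with the standard tree via Example \ref{graph group completion}, and reading off that a non-trivial element with long fixed segment would have to centralise a peripheral $\mathbb{Z}^2$ across too many adjacent pieces.
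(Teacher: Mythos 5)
Your overall architecture --- identify the profinite JSJ tree with the standard tree of the completed graph of groups via efficiency of the JSJ decomposition, then transfer the $4$-acylindricity of \cite[Lemma 2.4]{wilton_profinite_2010} --- is the right shape, but the transfer step contains a genuine gap, and it is precisely the gap the paper flags (attributing the observation to Gareth Wilkes). The stabilizer of a segment in $\wh{T}$ is an intersection of the form $\overline{P}_1\cap\overline{P}_2^{\hat\gamma}$ where $P_1,P_2$ are peripheral $\Z^2$'s of a geometric piece $N$ and $\hat\gamma$ is an \emph{arbitrary} element of $\wh{\pi_1N}$, not necessarily in $\pi_1N$. The results of \cite{wilton_profinite_2010} and \cite{hamilton_separability_2013} only control this intersection when $\hat\gamma\in\pi_1N$. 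Your proposed reduction to the discrete case via a finite quotient $Q$ cannot close this gap: the edge stabilizers are infinite (closures of $\Z^2$), so no finite quotient sees them faithfully; and, more fundamentally, smallness of $\overline{P}_1\cap\overline{P}_2^{\hat\gamma}$ is not detected by the smallness of its image in any single $Q$, nor is it implied by the smallness of all the discrete intersections $P_1\cap P_2^{\gamma}$, $\gamma\in\pi_1N$ --- in general $\overline{A}\cap\overline{B}$ strictly contains $\overline{A\cap B}$, and for non-discrete $\hat\gamma$ there is no discrete intersection to compare with at all. Malnormality (even relative malnormality) of a separable subgroup does not automatically pass to profinite completions. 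Your first, more combinatorial sketch has the same problem, since it invokes profinite malnormality of peripheral subgroups as ``available from \cite{wilton_profinite_2010}'', which it is not for profinite conjugators.

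What is actually needed is the fully profinite statement isolated in the paper as Lemma~\ref{lemma: Malnormal edge groups}: for \emph{every} $\hat\gamma\in\wh{\pi_1N}$, the intersection $\overline{P}_1\cap\overline{P}_2^{\hat\gamma}$ equals the maximal normal procyclic subgroup of $\wh{\pi_1N}$. Given that lemma, the walk-along-the-segment bookkeeping does yield $4$-acylindricity. The lemma itself is proved by rerunning the arguments of \cite{wilton_profinite_2010} and \cite{hamilton_separability_2013} with genuinely profinite inputs substituted for the discrete ones: in the Seifert-fibred case, \cite[Proposition 3.4 and Lemma 3.5]{chagas_limit_2007} in place of \cite[Lemma 3.6]{ribes_conjugacy_1996}; in the hyperbolic case, \cite[Lemma 4.5]{wilton_distinguishing_2017} in place of \cite[Lemma 4.7]{hamilton_separability_2013}. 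A complete written account is \cite[Proposition 6.8]{wilkes_profinite_2017b}. Your proof would be repaired by replacing the finite-quotient reduction with an appeal to (or a proof of) this profinite malnormality statement.
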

 
This was stated in \cite[p.\ 376]{wilton_distinguishing_2017}, and a careful proof was written down in \cite[Proposition 6.8]{wilkes_profinite_2017b}.    Proposition \ref{prop: Profinite 4-acylindrical} follows in a straightforward manner from the following statement.

\begin{lemma}\label{lemma: Malnormal edge groups}
Let $N$ be geometric 3-manifold with incompressible toral boundary, not homeomorphic to an interval bundle over a torus or Klein bottle, and let $P_1,P_2$ be two $\mathbb{Z}^2$ subgroups conjugate to the fundamental groups of boundary components. Then, for any $\hat{\gamma}\in\widehat{\pi_1N}$, $\overline{P}_1\cap \overline{P}_2^{\hat{\gamma}}$ is equal to the maximal normal procyclic subgroup of $\widehat{\pi_1N}$.
\end{lemma}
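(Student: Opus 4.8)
The plan is to handle separately the two kinds of geometric piece that can occur. Since any $\mathbb{Z}^2$ subgroup conjugate to a boundary subgroup is carried by conjugation to a boundary subgroup, and the maximal normal procyclic subgroup $Z\leq\widehat{\pi_1N}$ is invariant under conjugation, I may assume that $P_1,P_2$ are the fundamental groups of two distinct boundary components of $N$; as $N$ is aspherical with incompressible boundary these are non-conjugate in $\pi_1N$. Because $N$ is geometric with incompressible toral boundary and is not an interval bundle over the torus or the Klein bottle, $N$ is either a finite-volume cusped hyperbolic $3$-manifold or a Seifert fibred space whose base $2$-orbifold $O$ is negatively curved. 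In the second case $\partial N\neq\emptyset$ forces $O$ to have nonempty boundary, so $\Lambda:=\pi_1N/\langle h\rangle=\pi_1^{\mathrm{orb}}(O)$ is a non-elementary virtually free group, where $h$ generates the regular fibre subgroup $\langle h\rangle\cong\mathbb{Z}$. (The two excluded interval bundles are precisely the Seifert pieces with $\chi^{\mathrm{orb}}(O)=0$, i.e.\ the ones in which a boundary torus already has finite index in $\pi_1N$; for those $\overline{P}_1\cap\overline{P}_2^{\hat\gamma}$ need not be procyclic and the statement fails, which is why they are excluded.) In the hyperbolic case I put $h=1$, so that in both cases $\langle h\rangle$ is the maximal normal cyclic subgroup of $\pi_1N$, is contained in every boundary subgroup $P_i$, and $\Lambda$ is either $\pi_1N$ itself or a non-elementary virtually free group.

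First I would pin down $Z$. The projection $\pi_1N\to\Lambda$ induces a continuous surjection $\widehat{\pi_1N}\to\widehat\Lambda$ with kernel the closure $\overline{\langle h\rangle}$, which is a copy of $\widehat{\mathbb{Z}}$ (trivial in the hyperbolic case), so it suffices to show that $\widehat\Lambda$ has no nontrivial normal procyclic subgroup. This is the one point where hyperbolicity really enters: for $\Lambda$ a non-elementary virtually free group or a cusped hyperbolic $3$-manifold group the required statement is contained in \cite{wilton_profinite_2010} and is treated carefully in \cite{wilkes_profinite_2017b}; alternatively one notes that a nontrivial normal procyclic subgroup of $\widehat\Lambda$ would, acting on the standard profinite tree of a graph-of-finite-groups (resp.\ JSJ) decomposition of $\widehat\Lambda$, leave invariant a line, whose pair of ends is then preserved by all of $\widehat\Lambda$, forcing $\widehat\Lambda$ to be virtually procyclic. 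Thus $Z=\overline{\langle h\rangle}$, and since $Z$ is normal and $\langle h\rangle\leq P_i$ we get $Z=Z^{\hat\gamma}\leq\overline{P}_1\cap\overline{P}_2^{\hat\gamma}$ for every $\hat\gamma\in\widehat{\pi_1N}$, which is the inclusion ``$\supseteq$''.

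The substance of the lemma is the reverse inclusion $\overline{P}_1\cap\overline{P}_2^{\hat\gamma}\leq Z$. The discrete input is the classical malnormality of the peripheral family: writing $\overline{P}_1,\overline{P}_2$ also for the images of these subgroups in $\Lambda$, one has $\overline{P}_1\cap\overline{P}_2^{\lambda}=1$ for \emph{every} $\lambda\in\Lambda$ --- in the hyperbolic case because distinct maximal parabolic subgroups fix distinct points of $\partial\mathbb{H}^3$ while a parabolic fixes a unique point, and in the Seifert case because two distinct boundary components of $O$ are represented by hyperbolic elements with distinct, $\Lambda$-inequivalent axes. Pulling back along $\pi_1N\to\Lambda$, this says $P_1\cap P_2^{\gamma}=\langle h\rangle$ for every $\gamma\in\pi_1N$. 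To upgrade this to $\widehat{\pi_1N}$ I would use the separability machinery for geometric $3$-manifold groups: (i) $\pi_1N$ induces the full profinite topology on each $P_i$, so $\overline{P}_i\cong\widehat{\mathbb{Z}}^2$ \cite{hamilton_separability_2013}; and (ii) the double cosets $P_1\gamma P_2$ are separable in $\pi_1N$ for all $\gamma$ \cite{hamilton_separability_2013} (for a hyperbolic piece this rests on the LERF property --- Agol, Wise --- and relative quasiconvexity of the $P_i$; for a Seifert piece it is elementary, and in any case one can quote \cite{wilton_profinite_2010}).

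The main obstacle is the transfer itself, because $\hat\gamma$ is a genuinely profinite element, so $\overline{P}_2^{\hat\gamma}$ is not a conjugate of $\overline{P}_2$ by any element of $\pi_1N$ and one cannot intersect inside $\pi_1N$. The remedy is to argue one finite quotient at a time: for each open normal subgroup $U\trianglelefteq\widehat{\pi_1N}$, approximate $\hat\gamma$ by some $\gamma_U\in\pi_1N$, and use (i) together with the separability of $P_1\gamma_U P_2$ to bound the image of $\overline{P}_1\cap\overline{P}_2^{\hat\gamma}$ in $\widehat{\pi_1N}/U$ by the image of $(P_1\cap P_2^{\gamma_U})U=\langle h\rangle U$; intersecting over all $U$ gives $\overline{P}_1\cap\overline{P}_2^{\hat\gamma}\leq\overline{\langle h\rangle}=Z$, and combined with ``$\supseteq$'' this proves the lemma. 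The delicate part is checking that the separability theorems available for geometric $3$-manifold pieces are strong enough to run this uniformly over $U$; everything else --- the identification of $Z$, the classical malnormality, and the reduction to honest boundary subgroups --- is standard or available in the quoted references.
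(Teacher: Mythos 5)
Your overall architecture --- splitting into the cusped hyperbolic and Seifert fibred cases, identifying the maximal normal procyclic subgroup $Z$ with the closure of the fibre subgroup $\langle h\rangle$ (trivial in the hyperbolic case), getting $Z\subseteq\overline{P}_1\cap\overline{P}_2^{\hat\gamma}$ for free, and locating the substance in the reverse inclusion --- matches the intended argument. But the mechanism you propose for the reverse inclusion is exactly the one the paper warns against: Wilkes's observation, recorded immediately after the statement, is that the results of \cite{wilton_profinite_2010} and \cite{hamilton_separability_2013} ``only handle the case in which $\hat\gamma\in\pi_1N$''. Your $U$-by-$U$ approximation does not repair this. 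Writing $q_U$ for the map to $\widehat{\pi_1N}/U$, what your scheme actually yields at level $U$ is $q_U(\overline{P}_1\cap\overline{P}_2^{\hat\gamma})\subseteq q_U\bigl(P_1\cap P_2^{\gamma_U}U\bigr)$, and the set $P_1\cap P_2^{\gamma_U}U$ is a priori much larger than $(P_1\cap P_2^{\gamma_U})(P_1\cap U)=\langle h\rangle(P_1\cap U)$. Separability of the double coset $P_1\gamma P_2$ for a \emph{fixed} $\gamma$ gives $\bigcap_U\bigl(P_1\cap P_2^{\gamma}U\bigr)=P_1\cap P_2^{\gamma}$, but in your scheme $\gamma_U$ changes with $U$; to pass to the limit you need a uniform statement of the shape ``for every open $V\ni 1$ there is $U$ with $P_1\cap P_2^{\gamma}U\subseteq\langle h\rangle V$ for \emph{all} $\gamma$ in the coset $\hat\gamma U$ simultaneously''. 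Nothing in \cite{hamilton_separability_2013} supplies that uniformity. This is precisely the ``delicate part'' you defer at the end, and it is the entire content of the lemma rather than a checkable detail.

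The paper's route around this is to quote genuinely profinite malnormality statements, proved by letting $\widehat{\pi_1N}$ itself act on profinite trees rather than by approximating $\hat\gamma$ by discrete elements: in the hyperbolic case one argues as in \cite{hamilton_separability_2013} but replaces Lemma 4.7 there by \cite[Lemma 4.5]{wilton_distinguishing_2017}, which asserts malnormality of $\overline{P}$ with respect to conjugation by arbitrary elements of $\widehat{\pi_1N}$; in the Seifert fibred case one argues as in \cite{wilton_profinite_2010} but appeals to \cite[Proposition 3.4 and Lemma 3.5]{chagas_limit_2007} (or \cite[Theorem 3.3]{wilton_distinguishing_2017}) in place of \cite[Lemma 3.6]{ribes_conjugacy_1996}, exploiting the structure of $\widehat{\pi_1O}$ as a free profinite product with cyclic amalgamation. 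Two further, smaller points: your reduction to ``distinct boundary components'' silently discards the case where $P_1$ and $P_2$ are conjugates of the \emph{same} boundary subgroup, which is the case needed for acylindricity and where malnormality (not mere non-conjugacy) of a single peripheral subgroup is the issue; and your identification of $Z$ in the hyperbolic case (triviality of the maximal normal procyclic subgroup of $\widehat{\pi_1N}$) is itself a profinite assertion that needs one of the same references rather than the heuristic line-stabiliser argument you sketch.
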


Gareth Wilkes has pointed out that the results of \cite{wilton_profinite_2010} and \cite{hamilton_separability_2013} in fact prove a weaker statement than Lemma \ref{lemma: Malnormal edge groups}, since they only handle the case in which $\hat{\gamma}\in\pi_1N$.  However, similar techniques do indeed establish Lemma \ref{lemma: Malnormal edge groups}, and hence Proposition \ref{prop: Profinite 4-acylindrical}. For the Seifert fibred case, one argues as in \cite{wilton_profinite_2010}, appealing to \cite[Proposition 3.4 and Lemma 3.5]{chagas_limit_2007} instead of \cite[Lemma 3.6]{ribes_conjugacy_1996}. (Alternatively, a proof using more recent technology might appeal to \cite[Theorem 3.3]{wilton_distinguishing_2017}.)  For the hyperbolic case, one argues as in \cite{hamilton_separability_2013}, but replaces \cite[Lemma 4.7]{hamilton_separability_2013} by \cite[Lemma 4.5]{wilton_distinguishing_2017}.

With Proposition \ref{prop: Profinite 4-acylindrical}  in hand, by Theorem \ref{general acylindrical}, a finitely generated pro-$p$ subgroup of $\widehat{\pi_1M}$ is a free pro-$p$ product of pro-$p$ subgroups of the vertex groups, so it suffices to consider the geometric cases.

\medskip
\subsection{Hyperbolic manifolds} The closed hyperbolic case is the subject of \cite[Theorem F]{wilton_distinguishing_2017};  in this
case our subgroup is free pro-$p$.

\medskip

Thus we consider cusped hyperbolic 3-manifolds in this subsection.  Recall that a subgroup of $\pi_1M$  is called \emph{peripheral}  if it is conjugate to the fundamental group of a cusp and so is isomorphic to $\Z\times \Z$. A conjugate of the closure of a peripheral group in $\widehat{\pi_1M}$ will also be called peripheral; it is isomorphic to $\widehat \Z\times\widehat \Z$.

\begin{definition}
Suppose that a group $G$ is hyperbolic relative to a collection of parabolic subgroups $\{P_1,\ldots,P_n\}$.  A subgroup $H$ of $G$ is called \emph{relatively malnormal} if, whenever an intersection of conjugates $H^\gamma\cap H$ is not conjugate into some $P_i$, we have $\gamma\in H$.
\end{definition}

It is well known that $\pi_1M$ is hyperbolic relative to its peripheral subgroups.  Following Wise, we analyse the cusped hyperbolic 3-manifold $M$ by cutting along an embedded family of surface $\Sigma_i$ so every cusp is cut by some $\Sigma_i$. The following theorem is \cite[Theorem 9.1]{wilton_distinguishing_2017}.

\begin{theorem}\label{cusphiearachy}
A hyperbolic 3-manifold $M$ with cusps has a finite-sheeted covering space $N\to M$ that contains a disjoint family of connected, geometrically finite, incompressible subsurfaces $\{\Sigma_1,\ldots,\Sigma_n\}$ such that:
\begin{enumerate}
\item each cusp of $N$ contains a boundary component of some $\Sigma_i$;
\item each $\pi_1\Sigma_i\subseteq \pi_1 N$ is relatively malnormal.
\end{enumerate}
\end{theorem}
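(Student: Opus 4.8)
The plan is to import this statement from \cite[Theorem 9.1]{wilton_distinguishing_2017}, whose proof exploits the virtual specialness of $\pi_1 M$ established by Wise. First I would pass to a finite-sheeted cover $M'\to M$ for which $\pi_1 M'$ is the fundamental group of a special cube complex (in the sparse sense appropriate to a cusped hyperbolic manifold, so that the peripheral $\Z\times\Z$ subgroups sit as parabolics). The hyperplanes of this cube complex then supply a finite collection of codimension-one subgroups of $\pi_1 M'$ that are relatively quasiconvex, hence geometrically finite.

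Next I would arrange that the cusps are cut. For each cusp of $M'$, with peripheral subgroup $P\cong\Z\times\Z$, one wants a wall subgroup $H$ whose hyperplane crosses the horospherical torus, i.e.\ with $H\cap P$ of rank one; this can be ensured, after passing to a deeper cover if necessary, because the separability of walls built into the special structure detects every nontrivial element of $P$. Given such a geometrically finite surface subgroup $H\le\pi_1 M'$, the associated cover $M'_H\to M'$ is topologically tame (Agol, Calegari--Gabai), and, $H$ being a surface group, it deformation retracts onto an embedded incompressible subsurface $\Sigma_H$. I would then invoke the separability of the quasiconvex subgroup $H$ (Haglund--Wise) to pass to a finite intermediate cover $N'\to M'$ into which $\Sigma_H$ lifts to an embedding, and take a common finite refinement over all cusps; since the relevant surfaces can be chosen pairwise disjoint already in these covers, the result is a single finite cover $N\to M$ carrying a disjoint family $\{\Sigma_1,\dots,\Sigma_n\}$ of connected, geometrically finite, incompressible subsurfaces with each cusp of $N$ containing a boundary component of some $\Sigma_i$. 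This establishes (1).

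The remaining point (2), relative malnormality of each $\pi_1\Sigma_i$, is the step I expect to be the main obstacle. Hyperplane subgroups of a special cube complex are malnormal in the combinatorial sense that a hyperplane neither self-crosses nor self-osculates; the difficulty is to control the \emph{excess} intersections $\pi_1\Sigma_i^{\gamma}\cap\pi_1\Sigma_i$ coming from parabolic elements and to push them into the peripheral structure. Here I would apply Wise's malnormal special quotient theorem --- equivalently, a double-coset separability argument for the pair consisting of $\pi_1\Sigma_i$ and the peripheral subgroups --- to pass to one further finite cover $N$ in which, for every $\gamma\notin\pi_1\Sigma_i$, the intersection $\pi_1\Sigma_i^{\gamma}\cap\pi_1\Sigma_i$ is conjugate into a cusp subgroup, so that $\pi_1\Sigma_i$ becomes relatively malnormal. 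This is the deepest ingredient; everything preceding it is a combination of tameness and quasiconvex separability.
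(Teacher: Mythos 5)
Your proposal is essentially the paper's approach: the paper gives no independent proof of this statement, simply quoting it as \cite[Theorem 9.1]{wilton_distinguishing_2017}, which is exactly the citation you lead with. Your additional sketch of how that cited theorem is established (virtual specialness, geometrically finite cusp-cutting surfaces, separability, and the malnormal special quotient theorem for relative malnormality) is consistent with the source but is not required here, since the result is imported wholesale.
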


Cutting along the family of surfaces given by Theorem \ref{cusphiearachy} produces a graph-of-groups decomposition $(\G,\Delta)$ for $G=\pi_1N$, with the property that every vertex and edge group is hyperbolic and virtually special. Any peripheral subgroup $P_i$ leaves invariant an infinite (Tits')  axis and splits into direct product of infinite cyclic groups $P_i=L_i\times R_i$   such that $L_i$  fixes every edge of the axis and $R_i$ acts by translations. The stabilizer of any pair of distinct edges of the Bass--Serre tree $S$ is conjugate to such $L_i$ for some peripheral subgroup $P_i$.   Moreover, $\pi_1N$ is subgroup separable (see  \cite[Corollary 5.5]{aschenbrenner_manifold_2015}) and so by Example \ref{graph group completion}  we can  pass to the corresponding graph of profinite groups $(\widehat{\G},\Delta)$ such that $\widehat G=\Pi_1(\widehat{\G},\Delta)$ with $\widehat G$ acting on the standard profinite tree $S(\widehat G)$. By \cite[Theorem 4.2]{wilton_distinguishing_2017},  $\wh{\pi_1(\Sigma_i)}$ are relatively malnormal in the profinite completion of peripheral subgroups and so every non-trivial stabilizer of a pair of distinct edges of $S(\widehat G)$ is conjugate to a certain $\widehat L_i\leq \widehat P_i$.

\medskip

We shall need however a more specific hierarchy on vertex groups $\G(v)$ of $G$ given by the next proposition.

\begin{theorem}\label{special hierarchy}  Let $M$ be a cusped hyperbolic 3-manifold.   The fundamental group $\pi_1M$  possesses a finite index subgroup $G$ that admits a a graph-of-groups decomposition $(\G,\Gamma)$  such that:
\begin{enumerate}
\item every vertex and edge group is hyperbolic and virtually special, and the stabilizer of any pair of two distinct edges of the Bass--Serre tree $S$ is cyclic and is conjugate to a cyclic subgroup $L_i$ of a peripheral subgroup;
\item moreover, each vertex group $\G(v)$ has a malnormal hierarchy, which can be chosen so that, at each step, every $L_i$  belongs to a vertex group of the corresponding step-decomposition (so that at the last step of the vertex group being a free product of cyclic groups each $L_i$ is contained in some cyclic  free factor).
\end{enumerate}
\end{theorem}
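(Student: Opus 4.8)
The plan is to take $(\G,\Gamma)$ to be the graph-of-groups decomposition already produced by Theorem \ref{cusphiearachy} (possibly for a deeper finite cover) and to obtain the extra hierarchy on its vertex groups from Wise's quasiconvex-hierarchy machinery, applied relative to the peripheral cyclic subgroups $L_i$; throughout one follows the strategy of \cite{wilton_distinguishing_2017}.

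First I would apply Theorem \ref{cusphiearachy} to $M$, producing the cover $N\to M$, the surfaces $\Sigma_1,\dots,\Sigma_n$ and the decomposition $(\G,\Delta)$ of $\pi_1N$ discussed after that theorem. Part (1) is exactly what is recorded there: all vertex and edge groups are hyperbolic and virtually special, each peripheral subgroup is $P_i=L_i\times R_i$ with $L_i$ fixing a segment of the Tits axis, and every stabiliser of a pair of distinct edges of the Bass--Serre tree $S$ is conjugate to some $L_i$. Since under passage to a finite-index subgroup a surface group with boundary stays free and a hyperbolic virtually special group stays hyperbolic and virtually special, part (1) is inherited by the decomposition induced on any finite-index subgroup $G\le\pi_1N$. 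So I am free to replace $\pi_1N$ by a deeper finite-index subgroup $G$ of $\pi_1M$ --- using subgroup separability of $\pi_1M$, \cite[Corollary~5.5]{aschenbrenner_manifold_2015} --- whenever the construction of the hierarchy requires it, and I take $\Gamma$ to be the underlying graph of the resulting decomposition.

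Now fix a vertex $v$ and put $K=\G(v)$; it is hyperbolic, virtually special and (being a subgroup of a $3$-manifold group) torsion-free. The conjugates of the $L_i$ that lie in $K$ form a finite family $\mathcal L_v$ of infinite cyclic subgroups, each carried by a cusp annulus on the boundary of the piece of $N$ corresponding to $v$. After a further finite-index passage (possible by separability of each $L_i$) I may assume that each member of $\mathcal L_v$ is its own maximal cyclic subgroup in $K$ and that members from distinct cusps are non-conjugate, so that $\mathcal L_v$ is a malnormal quasiconvex family; in particular $K$ is hyperbolic relative to $\mathcal L_v$. Being virtually special and hyperbolic relative to the cyclic family $\mathcal L_v$, the group $K$ has, by Wise's quasiconvex-hierarchy theory, a quasiconvex hierarchy relative to $\mathcal L_v$; since $K$ is torsion-free this hierarchy terminates in free groups and in the members of $\mathcal L_v$ themselves, and after another finite-index passage, using the Malnormal Special Quotient Theorem, it may be taken malnormal. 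In such a hierarchy every member of $\mathcal L_v$ is elliptic at every stage --- so it lies in a vertex group of each step-decomposition --- and, being peripheral, it occurs at the bottom as its own terminal vertex group; as free groups and cyclic groups are free products of cyclic groups, this is precisely (2).

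Finally I would run this construction for all (finitely many) vertices $v$ and pass to a single finite-index $G\le\pi_1M$ over which all of these finitely many finite-index and hierarchy conditions hold simultaneously, again by separability; $(\G,\Gamma)$ is then the surface-cutting decomposition of $G$. The main obstacle is the relative hierarchy step of the previous paragraph: isolating the precise form of Wise's theory needed --- a \emph{malnormal, quasiconvex} hierarchy of the vertex group \emph{relative to} the family $\mathcal L_v$, terminating in free products of cyclic groups --- and verifying its hypotheses, namely that $\mathcal L_v$ is a malnormal quasiconvex family in $K$ and that all of the (finitely many) required passages to finite-index subgroups can be performed inside one finite cover of $M$. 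The topology of $M$ enters only through Theorem \ref{cusphiearachy} and the facts recorded after it; granted those, the argument is essentially group-theoretic.
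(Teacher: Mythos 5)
Your proposal is correct and follows essentially the same route as the paper: part (1) from Theorem \ref{cusphiearachy}, then for each vertex group a malnormal, quasiconvex, fully $\mathcal{L}_v$-elliptic hierarchy relative to the peripheral cyclic family (the paper cites \cite[Theorem 2.11]{agol_alternate_2016} for this, with relative hyperbolicity of $(\pi_1N_v,\mathcal{L}_v)$ supplied by \cite[Theorem 7.11]{bowditch_relatively_2012}), and finally separability of the vertex groups to pass to a single finite-index subgroup $G$ in which all the required finite-index passages hold simultaneously. The only cosmetic difference is that you invoke Wise's machinery generically where the paper pins down the Agol--Groves--Manning formulation.
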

\begin{proof}
We consider the finite-sheeted cover $N$ of $M$ provided by Theorem \ref{cusphiearachy}.  The fundamental group of $N$ comes equipped with a decomposition that satisfies item (1) above.  For item (2), we appeal to \cite[Theorem 2.11]{agol_alternate_2016}.  Indeed, let $N_v$ be a component of the given decomposition of $N$.  Then the $L_i$ that are conjugate into $N_v$ from a malnormal family $\mathcal{L}_v$, and so the pair $(\pi_1N_v,\mathcal{L}_v)$ is relatively hyperbolic by \cite[Theorem 7.11]{bowditch_relatively_2012}.  Since $\pi_1N_v$ is also hyperbolic and virtually special, we may apply  \cite[Theorem 2.11]{agol_alternate_2016} to obtain a normal subgroup $G_v$ of finite index in $\pi_1N_v$ so that the induced pair $(G_v,\mathcal{L}'_v)\lhd (\pi_1N_v,\mathcal{L}_v)$ admits, in the terminology of \cite{agol_alternate_2016},  a malnormal, quasiconvex, fully $\mathcal{L}'_v$-elliptic hierarchy.  This means precisely that there is a finite hierarchy in which every vertex group is hyperbolic and virtually special, and every edge group is malnormal and quasiconvex, and in which the intersection with every conjugate of $L_i\in \mathcal{L}_v$ is always elliptic, terminating in free products of groups from $\mathcal{L}'_v$ and free groups.

Since the subgroups $\pi_1N_v$ are separable in $\pi_1N$, we may pass to a finite-index normal subgroup $G$ of $\pi_1N$ so that every vertex group of the induced decomposition is contained in some $\pi_1N_v$.  The subgroup $G$ then has the properties claimed.
\end{proof}

We deduce from Lemma \ref{almost cyclic edge stabilizers} the following.

\begin{corollary} \label{parabolic in H}  Let $H$ be a finitely generated pro-$p$ subgroup of the profinite completion $\widehat G$. Then $H=(\curlyH, \Delta)$ is the fundamental pro-$p$ group of a finite graph of finitely generated pro-$p$ groups   with cyclic edge groups conjugate to subgroups of $L_i$, and where  the vertex  groups of $(\curlyH, \Delta)$  are conjugate to some vertex groups of $(\G, \Gamma)$.  \end{corollary}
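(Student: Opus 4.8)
The plan is to apply Lemma \ref{almost cyclic edge stabilizers} to the action of $\widehat G$ on the standard profinite tree of the completed decomposition $(\widehat\G,\Gamma)$, and then to extract the sharper statements about the edge and vertex groups from the construction carried out inside that lemma, together with Theorem \ref{special hierarchy}(1). I would first set up the profinite picture, exactly as in the paragraphs preceding Theorem \ref{special hierarchy}: since $G$ has finite index in $\pi_1M$ it is subgroup separable, and the vertex and edge groups of $(\G,\Gamma)$ are finitely generated, so by Example \ref{graph group completion} the completed graph of groups $(\widehat\G,\Gamma)$ is proper, $\widehat G=\Pi_1(\widehat\G,\Gamma)$, and $\widehat G$ acts on the standard profinite tree $S:=S(\widehat G)$ with $S/\widehat G=\Gamma$, with vertex and edge stabilizers the conjugates of the vertex and edge groups $\widehat{\G(v)}$, $\widehat{\G(e)}$, and with the abstract Bass--Serre tree of $(\G,\Gamma)$ embedding $G$-equivariantly in $S$. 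Because $\Gamma$ is finite, $E(S)$ is a finite union of coset spaces $\widehat G/\widehat{\G(e)}$ and is therefore compact and non-empty.

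The crucial point is that this action satisfies the hypothesis of Lemma \ref{almost cyclic edge stabilizers}: every non-trivial subgroup of $\widehat G$ fixing two distinct edges of $S$ is procyclic, and in fact conjugate into the closure $\overline{L_i}$ of some peripheral cyclic subgroup $L_i$. By Theorem \ref{special hierarchy}(1) this holds for the abstract action of $G$, and the profinite upgrade follows just as in the discussion preceding Theorem \ref{special hierarchy}, from the relative malnormality of the subgroups $\widehat{\pi_1\Sigma_i}$ in the completions of the peripheral subgroups \cite[Theorem 4.2]{wilton_distinguishing_2017}; one only needs to observe that the decomposition $(\G,\Gamma)$ of the finite-index subgroup $G$ is induced from the $\Sigma_i$-decomposition of $\pi_1N$, so that $S$ is a copy of the corresponding profinite tree for $\widehat{\pi_1N}$ and the fixator statement is inherited. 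With this in hand, Lemma \ref{almost cyclic edge stabilizers} applies to the finitely generated pro-$p$ subgroup $H\le\widehat G$ and gives $H=\Pi_1(\curlyH,\Delta)$, a finite graph of finitely generated pro-$p$ groups whose edge groups are stabilizers in $H$ of pairs of distinct edges of $S$; by the previous sentence these edge groups are cyclic and conjugate to subgroups of the $L_i$.

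It remains to identify the vertex groups, and for this one inspects the construction in the proof of Lemma \ref{almost cyclic edge stabilizers}: after passing to the quotients by $\widetilde{\Phi^n(H)}$, collapsing the part with non-procyclic edge stabilizers and applying Corollaries \ref{cyclic edge stabilizers} and \ref{bound}, and then collapsing the stars around the maximal vertex stabilizers via Lemma \ref{pair edge stabilizers} and applying Corollary \ref{bound} once more, every vertex group of the resulting $(\curlyH,\Delta)$ is a stabilizer in $H$ of some vertex of $S$ (either a maximal vertex stabilizer of $H$ in $S$, or a procyclic stabilizer of a vertex left uncollapsed). Since the vertex stabilizers of the $\widehat G$-action on $S$ are exactly the conjugates of the $\widehat{\G(v)}$, each vertex group of $(\curlyH,\Delta)$ is conjugate into a vertex group of $(\G,\Gamma)$, which is the assertion of the corollary.

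The one step I expect to require genuine care is the profinite upgrade in the second paragraph: promoting the fixator statement of Theorem \ref{special hierarchy}(1) from $G$ to $\widehat G$ acting on $S$. This is precisely where \cite[Theorem 4.2]{wilton_distinguishing_2017} is needed, guaranteeing that a non-trivial fixator of a pair of distinct edges of the profinite tree $S$ --- a priori much larger than its abstract counterpart --- is still forced into a conjugate of some $\overline{L_i}$. Everything else in the argument is a direct appeal to results already established in the paper.
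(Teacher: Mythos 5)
Your proposal is correct and follows exactly the route the paper intends: the paper simply states that the corollary is "deduced from Lemma \ref{almost cyclic edge stabilizers}", relying on the preceding discussion (compactness of $E(S(\widehat G))$, properness via subgroup separability, and the identification of stabilizers of pairs of distinct edges with conjugates of the $\widehat L_i$ via \cite[Theorem 4.2]{wilton_distinguishing_2017}) together with the fact, built into the proof of that lemma, that the vertex and edge groups of $(\curlyH,\Delta)$ are stabilizers of vertices and edges of the tree. Your write-up supplies precisely these details, so it matches the paper's argument.
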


\begin{corollary}\label{free vertex decomposition}
Let $H$ be a finitely generated pro-$p$ subgroup of the profinite completion of a vertex group $\widehat \G(v)$.  Let $\tilde{v}$ be a vertex in the standard tree $S(\wh{G})$ mapping to $v$; consider the action of $H$ on the set of incident edges at $\tilde{v}$, and let $e_1,\ldots,e_n$ be a set of orbit representatives, with corresponding stabilizers $\curlyH(e_i)$ in $H$.  Then $H$ splits as a free pro-$p$ product $H=\coprod_i  \curlyH(e_i) \amalg F$, where $F$ is a free pro-$p$ group. \end{corollary}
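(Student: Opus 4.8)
The plan is to realise $H$ as acting $0$-acylindrically on a profinite tree $D$ whose vertex stabilizers are, up to conjugacy, precisely the $\curlyH(e_i)$, and then to invoke Theorem~\ref{general acylindrical}. The subtlety is that $H$ fixes $\tilde v$, so the action of $H$ on $S(\widehat G)$ carries no Bass--Serre information by itself; what makes the statement true is that the vertex $\tilde v$ can be ``unfolded'' using the malnormal hierarchy of $\widehat{\G}(v)$ furnished by Theorem~\ref{special hierarchy}(2).

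First I would transfer that hierarchy to $\widehat{\G}(v)$. Since $\G(v)$ is hyperbolic and virtually special, every edge group occurring in its hierarchy is separable, so by Example~\ref{graph group completion} each step of the hierarchy is realised inside $\widehat{\G}(v)$ as a splitting over the profinite completions of those (malnormal, quasiconvex) edge groups, with the discrete standard tree embedded $\widehat{\G}(v)$-equivariantly in the profinite one, with every conjugate of an $L_i$ still elliptic, and --- because the edge groups are malnormal --- with $\widehat{\G}(v)$, and hence $H$, acting $k$-acylindrically on each of these profinite trees for a $k$ uniform over the steps. Applying Theorem~\ref{general acylindrical} at the first step, recursing on the resulting vertex stabilizers down the finite hierarchy, and composing free pro-$p$ products along the way (\cite[Lemma~9.1.5]{ribes_profinite_2010}), one obtains a free pro-$p$ decomposition of $H$ whose non-free factors are finitely generated pro-$p$ subgroups of the cyclic groups at the bottom of the hierarchy; in particular $H$ is a free pro-$p$ product of procyclic groups and a free pro-$p$ group (so, being torsion-free, free pro-$p$), and likewise each $\curlyH(e_i)=H\cap\widehat{\G}(e_i)^{g_i}$ --- a finitely generated pro-$p$, hence infinite-index, subgroup of $\widehat{\G}(e_i)$, the profinite completion of the surface group $\pi_1\Sigma_i$ --- is free pro-$p$.

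To extract the asserted $\curlyH(e_i)$-decomposition I would build the tree $D$ announced above: refine $S(\widehat G)$ at the $\widehat G$-orbit of $\tilde v$ by replacing $\tilde v$ with a profinite tree manufactured from the hierarchy of $\widehat{\G}(v)$ along which each edge group $\widehat{\G}(e)$ incident to $\tilde v$ is elliptic, re-attaching the branch of $S(\widehat G)$ beyond $e$ at a point fixed by $\widehat{\G}(e)$; then collapse, using the proposition on p.~486 of \cite{zalesskii_open_1992} as in Lemma~\ref{pair edge stabilizers}, the connected subgraphs on which $H$ has nontrivial edge stabilizer (such a stabilizer being cyclic and conjugate into some $L_i$, since those are the only nontrivial stabilizers of pairs of distinct edges of $S(\widehat G)$). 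On the quotient $D$ the group $H$ acts with trivial edge stabilizers, and its vertex stabilizers are the $\curlyH(e_i)$ together with procyclic subgroups coming from the bottom of the hierarchy; the latter are absorbed into the relevant $\curlyH(e_i)$ (each such $L_i$ lies in $\widehat{\G}(e_i)$) by Lemma~\ref{over free factor}. Theorem~\ref{general acylindrical} applied to this $0$-acylindrical action then yields $H=\coprod_i\curlyH(e_i)\amalg F$ with $F$ free pro-$p$, and the list $e_1,\dots,e_n$ is finite because the quotient graph $D/H$ is finite.

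The main obstacle is the construction of $D$ in the previous paragraph: one must check that the edge groups $\widehat{\G}(e)$ incident to $\tilde v$ can be made simultaneously elliptic in a profinite tree built from the $L_i$-elliptic hierarchy of $\widehat{\G}(v)$, so that the refinement of $S(\widehat G)$ is again a profinite tree, and that after collapsing the cyclic $L_i$-edges the action becomes genuinely $0$-acylindrical with the $\curlyH(e_i)$ as vertex stabilizers --- in effect, reconciling the $L_i$-elliptic hierarchy of $\widehat{\G}(v)$ with the edge structure at $\tilde v$ inherited from the JSJ-style decomposition of $\widehat G$. Everything else is bookkeeping with Theorem~\ref{general acylindrical}, Lemma~\ref{over free factor}, and the behaviour of free pro-$p$ products under composition.
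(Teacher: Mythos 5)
Your first two paragraphs follow essentially the same route as the paper's proof: descend the malnormal hierarchy of Theorem \ref{special hierarchy} for $\G(v)$, split $H$ at each stage as a free pro-$p$ product of vertex stabilizers and a free pro-$p$ group (the paper does this via the $1$-acylindrical case of Lemma \ref{almost cyclic edge stabilizers} rather than Theorem \ref{general acylindrical}, but the two are interchangeable here), and compose the decompositions down to the bottom of the hierarchy. That part is sound, modulo the point --- which the paper also leaves largely implicit --- that malnormality of the discrete edge groups must be promoted to the statement that stabilizers of pairs of distinct edges of the \emph{profinite} tree are trivial, which is not automatic and needs the separability results quoted in Section 4.

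The genuine gap is exactly the step you flag as ``the main obstacle'', and it does not go away. Your construction of $D$ requires each edge group $\widehat{\G}(e)$ incident to $\tilde v$ --- a conjugate of $\widehat{\pi_1\Sigma_j}$, the completion of a surface group --- to be elliptic in a profinite tree built from the hierarchy of $\widehat{\G}(v)$, so that the branch of $S(\widehat G)$ beyond $e$ can be re-attached at a fixed point. But the hierarchy supplied by Theorem \ref{special hierarchy}(2) via \cite{agol_alternate_2016} is only fully $\mathcal{L}'_v$-elliptic: it keeps the \emph{cyclic} groups $L_i$ elliptic at every stage, whereas the full boundary surface groups $\pi_1\Sigma_j$ are in general cut by the later stages and are not elliptic. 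So the refinement of $S(\widehat G)$ cannot be carried out, and the identification of the $\curlyH(e_i)$ as vertex stabilizers of a $0$-acylindrical action is unsupported. The paper's proof sidesteps this entirely by only ever tracking the procyclic groups $\widehat L_i^{\,g}$ down the hierarchy --- which is precisely what clause (2) of Theorem \ref{special hierarchy} was engineered for --- observing that at the bottom each $L_i$ lies inside a cyclic free factor, so that its intersection with (the image of) $H$ is a free factor there, and letting these free-factor decompositions compose back up. Correspondingly, the free factors this corollary is actually used for in the cusped-hyperbolic theorem are the cyclic edge groups of $(\curlyH,\Delta)$ from Corollary \ref{parabolic in H}, i.e.\ stabilizers of \emph{pairs} of distinct edges at $\tilde v$, conjugate into some $\widehat L_i$ --- not the full single-edge stabilizers $H\cap\widehat{\pi_1\Sigma_j}^{\,g}$ that you work with. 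If you replace your $\curlyH(e_i)$ by those procyclic groups, your first two paragraphs already contain essentially the whole argument and the tree $D$ becomes unnecessary.
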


\begin{proof}    We use induction  on the hierarchy of Theorem \ref{special hierarchy} for $\G(v)$. So let $K$ be a subgroup that belongs to this hierarchy.

The base of the induction is the case in which $K$ is a free product of cyclic vertex groups $K(v)$. Since $\widehat P$ is malnormal in $\widehat{\pi_1M}$   (see \cite[Lemma 4.5]{wilton_distinguishing_2017}) and $\curlyH(e)= \widehat L_i^g$ for some $g\in \widehat G$,   in this case each non-trivial  intersection $K(v)\cap  \curlyH(e)=K(v)\cap  \widehat L_i^g=K(v)$  so each $\curlyH(e)= \widehat L_i^g$ generates a free factor.

Now, to perform the inductive step, note that  in this case by Lemma \ref{almost cyclic edge stabilizers} $H=\coprod_{v\in V(\Delta_K)} \curlyH(v)\amalg F$ is a free pro-$p$ product of  vertex  groups and a free pro-$p$ group $F$  intersecting trivially all conjugates of vertex groups. Applying the induction hypothesis to each vertex group $\curlyH_K(v)$ and using that a pro-$p$ free factor of a finitely generated pro-$p$ group is finitely generated, we deduce the statement.
\end{proof}

We are now ready to deduce the classification of pro-$p$ subgroups in the cusped hyperbolic case.

\begin{theorem} Let $\pi_1M$ be the fundamental group of an orientable hyperbolic 3-manifold with $k$ cusps, and $H$ a finitely
generated pro-$p$ subgroup of $\widehat{\pi_1M}$. Then $H$  is a free
pro-$p$ product of free abelian pro-$p$ groups of rank $\leq
2$.\end{theorem}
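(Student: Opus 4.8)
The plan is to reduce to the finite-index subgroup $G$ supplied by Theorem~\ref{special hierarchy}, use Corollaries~\ref{parabolic in H} and~\ref{free vertex decomposition} to force $H$ into a very rigid graph-of-groups form, and then collapse that graph using Lemmas~\ref{HNN} and~\ref{over free factor}, controlling the outcome by a centralizer computation based on malnormality of the peripheral subgroups.

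First I would fix $G\lhd\pi_1M$ as in Theorem~\ref{special hierarchy}, with standard profinite tree $S=S(\widehat G)$, and replace $H$ by the open subgroup $Q:=H\cap\widehat G$, again a finitely generated pro-$p$ group. By Corollary~\ref{parabolic in H}, $Q=\Pi_1(\curlyH,\Delta)$ is the fundamental pro-$p$ group of a finite graph of finitely generated pro-$p$ groups whose edge groups are procyclic and conjugate into the cyclic peripheral subgroups $L_i\le\overline{P_i}\cong\widehat\Z\times\widehat\Z$, and whose vertex groups are (conjugate into) the vertex groups $\widehat\G(v)$. Edges with trivial edge group only contribute free pro-$p$ products, so I may assume every edge group is $\cong\Z_p$. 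Corollary~\ref{free vertex decomposition}, applied to each vertex group $\curlyH(v)$, writes it as a free pro-$p$ product of the stabilizers of orbit representatives of incident edges of $S$ together with a free pro-$p$ group; since those stabilizers are pro-$p$ subgroups of conjugates of $\overline{L_i}\cong\widehat\Z$, they are each $\cong\Z_p$ or trivial, and hence $\curlyH(v)$ is itself free pro-$p$. The crucial point is that every edge group of $(\curlyH,\Delta)$ at $v$ is, up to conjugacy in $\curlyH(v)$, one of those procyclic stabilizers; so after adjusting the edge maps, each edge group $\Z_p\cong\curlyH(e)$ sits as a \emph{procyclic free factor} of every vertex group it is attached to.

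Now I would collapse a maximal subtree of $\Delta$ edge by edge. Collapsing a non-loop edge $e$ replaces the adjacent free pro-$p$ vertex groups $\langle c\rangle\amalg A'$ and $\langle c\rangle\amalg B'$ (with $\langle c\rangle=\curlyH(e)$) by their amalgam $\langle c\rangle\amalg A'\amalg B'$ over $\langle c\rangle$ — again free pro-$p$, with the remaining edge groups still procyclic free factors — by Lemma~\ref{over free factor}(i) in the rank-$1$ case (here $C(\langle c\rangle)=\langle c\rangle$ by a Bass--Serre argument on the amalgam's tree). After the whole subtree is collapsed, $Q$ is a multiple HNN-extension $\mathrm{HNN}(F^\ast,\langle c_i\rangle,t_i)$ of a finitely generated free pro-$p$ group $F^\ast$ in which each $\langle c_i\rangle$ and each $\langle c_i\rangle^{t_i}$ is conjugate to a free-basis element, so Lemma~\ref{HNN} gives
\[
Q=\coprod_{y}\bigl(\langle y\rangle\times F(T_y)\bigr)\amalg F(Z\cup X),
\]
where $T_y$ is the set of stable letters reconnecting $\langle y\rangle$ to a conjugate of itself. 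Finally, for $1\ne c\in\langle y\rangle$ the element $c$ is conjugate into some $L_i\le\overline{P_i}$, and $\overline{P_i}$ is malnormal in $\widehat{\pi_1M}$ by \cite[Lemma~4.5]{wilton_distinguishing_2017}, so the abelian-by-free group $\langle y\rangle\times F(T_y)$ — which centralizes $c$ — lies in a conjugate of $\overline{P_i}\cong\widehat\Z\times\widehat\Z$; hence $|T_y|\le 1$ and $\langle y\rangle\times F(T_y)$ is $\Z_p$ or $\Z_p\times\Z_p$, so $Q$ is a free pro-$p$ product of free abelian pro-$p$ groups of rank $\le 2$. To pass from $Q$ to $H$, I would use that the conjugacy classes of the rank-$2$ free factors of $Q$ — each contained in a conjugate of some $\overline{P_i}$ — are canonically determined, hence permuted by $H$, so $H$ acts on a pro-$p$ tree with trivial edge stabilizers; Theorem~\ref{general acylindrical} then decomposes $H$ as a free pro-$p$ product of such vertex stabilizers and a free pro-$p$ group, and malnormality of the $\overline{P_i}$ forces each of those vertex stabilizers to remain inside the corresponding $\overline{P_i}$, hence free abelian of rank $\le 2$.

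I expect the main obstacle to be the bookkeeping in the collapsing step: one has to know that, at the moment each edge is processed, its edge group is an honest procyclic \emph{free factor} of the free pro-$p$ group assembled so far — which is exactly the content of Corollary~\ref{free vertex decomposition}, but matching the edge groups of $(\curlyH,\Delta)$ to the free factors it produces is delicate — so that Lemmas~\ref{over free factor}(i) and~\ref{HNN} genuinely apply; and one must carry along the centralizer bound (from malnormality of the peripheral subgroups), since this is precisely what keeps every $|T_y|$ at most $1$ and so prevents any free factor of rank $\ge 3$ from appearing. The secondary difficulty is the promotion from $Q=H\cap\widehat G$ to $H$, since the free pro-$p$ product decomposition of $Q$ is not a priori $H$-invariant and one must extract enough canonical structure (the maximal rank-$2$ abelian subgroups) to build an $H$-tree.
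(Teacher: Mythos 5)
Your main line of attack --- reduce to the finite-index subgroup $G$ of Theorem~\ref{special hierarchy}, apply Corollaries~\ref{parabolic in H} and~\ref{free vertex decomposition}, and collapse the resulting finite graph of free pro-$p$ groups using the rank-at-most-$2$ centralizer bound coming from malnormality of the peripheral subgroups --- is the same strategy as the paper's, which organizes the collapse as an induction on the size of $\Delta$ via Lemma~\ref{over free factor} (both parts) rather than ``maximal subtree first, then Lemma~\ref{HNN}''. Within that part, the point you flag but do not resolve is genuinely the crux of the bookkeeping: when an edge of $\Delta$ is processed, its edge group need not literally be one of the free factors produced by Corollary~\ref{free vertex decomposition} (a priori it could be a proper open subgroup of one, and two edges at the same vertex could meet the same peripheral conjugate), while the hypotheses of Lemmas~\ref{HNN} and~\ref{over free factor} require an exact match between the amalgamated/associated subgroup and a free factor. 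The paper handles this by strengthening the inductive claim to ``every $H\cap \widehat L_i^{\,g}$ is conjugate to a free factor'', i.e.\ by carrying the \emph{full} peripheral intersection $\curlyH(e)\times H_{0i}$ (not just the edge group) as the distinguished abelian free factor of each piece; with that invariant Lemma~\ref{over free factor} applies verbatim at every step. Your argument should be restated with this invariant, after which the collapsing goes through.

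The genuine gap is the final step, from $Q=H\cap\widehat G$ to $H$. Knowing that $H$ permutes by conjugation the maximal rank-$2$ abelian free factors of $Q$ gives an action of $H$ on a profinite \emph{set}, not on a pro-$p$ tree: the pro-$p$ tree underlying the free decomposition of $Q$ is only a $Q$-tree, and extending the action to the finite $p$-group $H/Q$ is precisely the hard content of the assertion that a pro-$p$ group with an open subgroup splitting as a free pro-$p$ product again so splits --- a pro-$p$ analogue of Stallings' theorem which is false without hypotheses and cannot be extracted from the canonicity argument you sketch (note also that $Q$ has a free factor $F$ that is not canonical at all, and the triviality of edge stabilizers of any putative $H$-tree is part of what must be proved). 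The paper closes this step by citing \cite[Theorem 1.2]{zalesskii_virtually_2016} on virtually free pro-$p$ products; some such external input is needed, and without it your proof is incomplete at exactly this point.
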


\begin{proof}  Let $G$ be the finite-index subgroup of $\pi_1M$ given by Theorem \ref{special hierarchy}. We prove first the theorem for $H\leq \widehat G$. By Corollary \ref{parabolic in H},  $H$ is the fundamental pro-$p$ group of a finite graph of finitely generated pro-$p$ groups $(\curlyH, \Delta)$, with cyclic edge groups conjugate to subgroups of $\widehat L_i$, where  the vertex  groups of $(\curlyH, \Delta)$  are stabilizers of certain vertices and edges of $S(\widehat G)$.     Moreover, by Corollary \ref{free vertex decomposition},  for each vertex $v\in \Delta$ we have $\curlyH(v)= (\coprod_{\curlyH_e\leq  \curlyH(v)}  \curlyH(e)) \amalg L_v$.

We use induction on the size of $\Delta$ to prove that $H$  is a free
pro-$p$ product of free abelian pro-$p$ groups of rank $\leq 2$, with every $H\cap \widehat L_i^g$ $g\in \widehat G$ being conjugate to a free factor.    If $\Delta$ is a vertex then the result just follows from the decomposition $\curlyH(v)= (\coprod_{\curlyH_e\leq  \curlyH(v)}  \curlyH(e)) \amalg L_v$ mentioned above.

Now let $e$ be an edge of $\Delta$.  Then  $H=H_1\amalg_{\curlyH(e)} H_2$ or $H=HNN(H_1, \curlyH(e), t)$, where  $H_1$ and $H_2$ are the fundamental groups of the graph of groups $(\curlyH, \Delta)$ restricted to the connected components  $\Delta_i$ of $\Delta\setminus \{e\}$ ($i=1,2$ o $i=1$). If $\curlyH(e)=1$ then the amalgamated free products  or HNN-extensions are free products and the result follows directly from the induction hypothesis.

Suppose $\curlyH(e)\neq 1$. By  the induction hypothesis
we may assume that $H_i=(\curlyH(e)\times H_{0i})\amalg L_i$.  Since  the profinite completion  $\widehat P$ of a parabolic subgroup $P$ is malnormal in $\widehat{\pi_1M}$ the centralizer of $\curlyH(e)$ in $H$ is abelian of at most rank $2$ so the result follows from Lemma \ref{over free factor}.

Now, to prove the general case, we observe that by what we just proved $H\cap \widehat G$ satisfies the statement of the theorem. Then the statement for $H$ follows from \cite[Theorem 1.2]{zalesskii_virtually_2016}.
\end{proof}

\subsection{Seifert fibred manifolds over hyperbolic bases}

In this section we deal with the cases in which $N$ admits a geometric structure modelled on either $\mathbb{H}^2\times\R$ or $\widetilde{SL_2(\R)}$.  Recall that, when $N$ is Seifert fibred, the fundamental group $\pi_1N$ is torsion-free of the form
\[
1\to\Z\to\pi_1N\to\pi_1O\to 1
\]
(not necessarily central) where $\pi_1O$ is a Fuchsian group. (We refer the reader to \cite{scott_geometries_1983} for details.)  Our classification theorem in this case shows that the pro-$p$ subgroups of the profinite completion of $\pi_1N$ are either free pro-$p$ products, or pro-$p$ completions of fundamental groups $\pi_1M$, where $M$ is a non-compact Seifert fibred manifold with residually $p$ fundamental group.

\begin{theorem} Let $G$ be the fundamental group a  Seifert fibred manifold with either $\mathbb{H}^2\times\R$ or $\widetilde{SL_2(\R)}$ geometry and $H$ a
finitely generated pro-$p$ subgroup of $\widehat G$. Then $H$ is a
cyclic   extension of a  free pro-$p$-product of cyclic
pro-$p$ groups.

\end{theorem}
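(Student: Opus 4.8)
The plan is to use the Seifert fibration to peel off the fibre, reducing the statement to one about finitely generated pro-$p$ subgroups of the profinite completion of a cocompact Fuchsian group, which we then handle with the acylindrical machinery of Section 3. Writing $G=\pi_1N$ in the exact sequence $1\to\Z\to G\to\pi_1O\to 1$, the first step is to check that completing this sequence keeps it exact. The fibre subgroup $\Z$ is separable in $G$: after passing to a finite-index subgroup we may assume $\Z$ is central, and then each quotient $G/\langle z^n\rangle$ is a finitely generated central extension of the residually finite group $\pi_1O$ by a finite group, hence residually finite; since $\Z/\langle z^n\rangle$ is finite, this produces a finite-index subgroup $L\le G$ with $L\cap\Z=\langle z^n\rangle$. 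Consequently
\[
1\longrightarrow\overline{\Z}\longrightarrow\widehat G\longrightarrow\widehat{\pi_1O}\longrightarrow 1
\]
is exact with $\overline{\Z}\cong\widehat{\Z}$ a closed normal subgroup of $\widehat G$, and $\widehat{\pi_1O}$ the profinite completion of a cocompact Fuchsian group of hyperbolic type.

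Now put $C:=H\cap\overline{\Z}$, a closed normal subgroup of $H$ (possibly trivial). Since $C$ is a pro-$p$ subgroup of $\widehat{\Z}\cong\prod_\ell\Z_\ell$ (product over all primes $\ell$), and the closed pro-$p$ subgroups of $\widehat{\Z}$ are exactly $\Z_p$ and the trivial group, $C$ is cyclic pro-$p$. Moreover $H/C\cong H\overline{\Z}/\overline{\Z}$ is a finitely generated pro-$p$ subgroup $Q$ of $\widehat{\pi_1O}$. Thus the theorem follows once we show that every finitely generated pro-$p$ subgroup $Q$ of the profinite completion of a cocompact Fuchsian group $\Lambda=\pi_1O$ of hyperbolic type is a free pro-$p$ product of finitely many cyclic pro-$p$ groups: then $1\to C\to H\to Q\to 1$ exhibits $H$ as a cyclic extension of such a free product.

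To analyse $Q$, I would use that $\Lambda$ acts on a tree with virtually free vertex stabilizers and procyclic edge stabilizers, by a $k$-acylindrical action for some $k$. When $O$ has nonempty boundary, $\Lambda$ is virtually free and one takes the Bass--Serre tree of a finite graph of finite groups decomposing $\Lambda$, on which all stabilizers are finite; in fact here $Q$ is directly a finitely generated virtually free pro-$p$ group. When $O$ is closed, $\Lambda$ is one-ended and one cuts along a two-sided essential simple closed curve in the underlying surface avoiding the cone points, producing a splitting over a maximal procyclic subgroup whose action on the Bass--Serre tree is acylindrical. Since cocompact Fuchsian groups are subgroup separable, Example \ref{graph group completion} shows that the Bass--Serre tree embeds in the standard profinite tree $S(\widehat\Lambda)$ and that the induced action of $\widehat\Lambda$, hence of $Q$, is $k$-acylindrical. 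By Theorem \ref{general acylindrical}, $Q$ is a free pro-$p$ product of finitely many pro-$p$ subgroups of vertex stabilizers together with a free pro-$p$ group. Each vertex stabilizer is virtually free profinite, so each of these pro-$p$ factors is a finitely generated virtually free pro-$p$ group; since the singular locus of the base orbifold of an orientable Seifert fibred manifold consists only of cone points, the finite subgroups of $\Lambda$, and hence of $\widehat\Lambda$ and of these factors, are cyclic, and their torsion elements have finite centralizers, so by \cite{herfort_virtually_2008} each factor is a free pro-$p$ product of finite cyclic $p$-groups and a free pro-$p$ group. Rewriting each free pro-$p$ group as a free pro-$p$ product of copies of $\Z_p$ and collecting everything gives the required description of $Q$.

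The main obstacle is the closed-base case of the last step: one must exhibit an essential curve in the underlying surface of $O$ whose associated splitting of $\pi_1O$ has acylindrical action on its Bass--Serre tree (so Theorem \ref{general acylindrical} applies after completing), and one must confirm that the completed vertex groups retain the finiteness properties — cyclic torsion with finite centralizers — needed to invoke the virtually free pro-$p$ structure theory. Once those are in place, the boundary-base case, the identification of $C$ as cyclic pro-$p$, and the exactness of the completed fibration sequence are routine by comparison.
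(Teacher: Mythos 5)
Your first reduction --- exactness of $1\to\overline{\Z}\to\widehat G\to\widehat{\pi_1O}\to 1$, the identification of $C=H\cap\overline{\Z}$ as procyclic, and the passage to the image $Q$ of $H$ in $\widehat{\pi_1O}$ --- is exactly the paper's set-up. The divergence, and the problem, is in how you handle $Q$.

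The gap is the step you yourself flag as ``the main obstacle'', and it is a genuine one, not a routine verification. Subgroup separability together with Example~\ref{graph group completion} only gives you that the discrete Bass--Serre tree of the splitting of $\Lambda=\pi_1O$ along a simple closed curve embeds in the standard profinite tree $S(\widehat\Lambda)$ and that the completed graph of groups is proper. It does \emph{not} give $k$-acylindricity of the $\widehat\Lambda$-action: for that you must control intersections $\overline{\langle c\rangle}\cap\overline{\langle c\rangle}^{\hat\gamma}$ for arbitrary $\hat\gamma\in\widehat\Lambda$, not merely for $\gamma\in\Lambda$, and this is a conjugacy-/double-coset-separability statement requiring real input (this is precisely the subtlety discussed after Lemma~\ref{lemma: Malnormal edge groups}, where the analogous claim for JSJ tori needs \cite{chagas_limit_2007} or \cite{wilton_distinguishing_2017} rather than the discrete malnormality). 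Without it, Theorem~\ref{general acylindrical} cannot be applied to $Q$. Moreover, even granting profinite acylindricity, your reduction does not actually save work: the free factors of $Q$ produced by Theorem~\ref{general acylindrical} are finitely generated pro-$p$ subgroups of completions of virtually free Fuchsian vertex groups, and to apply \cite{herfort_virtually_2008} to them you must still prove that their torsion elements have finite centralizers --- which is essentially the same computation you were trying to avoid for $\widehat{\pi_1O}$ itself.

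The paper's route is shorter and bypasses acylindricity entirely. First, $Q$ is virtually free pro-$p$ because $Q\cap\widehat S$ is free pro-$p$ for $S$ a finite-index surface subgroup of $\pi_1O$, by \cite[Proposition 1.2]{zalesskii_profinite_2005} --- an input missing from your argument. Second, after passing to a cover of degree coprime to $p$ so that $\pi_1O$ is not a triangle group, one writes $\pi_1O=K*_CF$ with $K$ a free product of cyclic groups and $F$ free; the profinite Bass--Serre results \cite[Theorems 3.10 and 3.12]{zalesskii_subgroups_1988} and \cite[Theorem 9.1.12]{ribes_profinite_2010} then show directly that every torsion element of $\widehat{\pi_1O}$ is conjugate into some cyclic $C_i$ and has centralizer equal to $C_i$, hence finite. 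A single application of \cite[Theorem 1]{herfort_virtually_2008} to $Q$ then yields the free pro-$p$ product of cyclic groups. I would recommend replacing your closed-base argument by this one; as written, your proof is incomplete at the step on which everything else depends.
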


\begin{proof}
Consider $H_0$, the image of $H$ in $\wh{\pi_1O}$, and note that $H_0$ is a finitely generated, pro-$p$ subgroup. It suffices to prove that $H_0$ is a  free pro-$p$-product
of cyclic pro-$p$ groups. Indeed, let $S$ be a surface subgroup of
finite index of $\pi_1O$.  Then $H\cap \widehat S$
is free pro-$p$ by \cite[Proposition 1.2]{zalesskii_profinite_2005}.

We now prove that the centralizer in $\wh{\pi_1O}$ of a torsion element of $H_0$ is finite.  Since no open subgroup of $\wh{\pi_1O}$ is pro-$p$, we may replace $O$ by a non-trivial finite-sheeted cover of degree $n>2$, where $n$ is coprime to $p$, whose completion contains $H_0$.  We may therefore assume that $\pi_1O$ is not a triangle group.  If $\pi_1O$ is torsion-free there is nothing to prove; otherwise, $\pi_1O$ splits as an amalgamation over an infinite cyclic group $\pi_1O=K*_C F$, where $F$ is free and $K=\bigast_i C_i$ is a free product of cyclic groups. It follows that $\widehat{\pi_1O}=\widehat K\amalg_{\wh{C}} \widehat F$ is a free profinite product with cyclic amalgamation. Applying \cite[Theorem 3.10]{zalesskii_subgroups_1988} twice, any finite subgroup of $\widehat{\pi_1O}$ is conjugate into $\wh{K}$, and thence to some $C_i$. By \cite[Theorem 3.12]{zalesskii_subgroups_1988}, $\widehat K \cap \widehat K^g$ is contained in a  $\wh{K}$-conjugate of $\wh{C}$. It follows that the centralizer $C_{\widehat{\pi_1O}}(c_i)$ of any element $c_i$ of $C_i$ is contained in $\widehat K$, and so by \cite[Theorem 9.1.12]{ribes_profinite_2010}, the centralizer coincides with $C_i$, as required.

By  \cite[Theorem 1]{herfort_virtually_2008}, virtually free pro-$p$ groups in which torsion elements have finite centralizers split as a free pro-$p$ product of
finite subgroups and a free pro-$p$ group.  Applying this to $H_0$ now completes the proof.
\end{proof}

\subsection{Sol and Nil manifolds}

Suppose that $N$ is a compact $3$-manifold with solvable fundamental group.  The groups that arise in this way were classified by Evans and Moser \cite{evans_solvable_1972}, and are all virtually polycyclic. It follows from this classification  that  a compact, orientable manifold $N$  with Nil or Sol geometry has a cover $N_0$ of index at most two such that $\pi_1{N_0}\cong\Z^2\rtimes \Z$, and we deduce the following.

\begin{theorem}
Let $N$ be a compact, orientable, Nil- or Sol-manifold.  If $p$ is odd then the Sylow $p$-subgroup of $\wh{\pi_1N}$ is isomorphic to the pro-$p$ completion of a semidirect product $\Z^2\rtimes\Z$, and for $p=2$, the Sylow $p$-subgroup has a subgroup of index at most 2 with this structure.
\end{theorem}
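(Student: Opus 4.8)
The plan is to push everything down to the index-$\leq 2$ subgroup $\Gamma_0=\pi_1 N_0\cong\Z^2\rtimes_A\Z$ produced above from the Evans--Moser classification, and then to exhibit a Sylow pro-$p$ subgroup of $\wh{\Gamma_0}$ by hand. Write $\Gamma=\pi_1 N$.

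First I would record the elementary reductions. Since $[\Gamma:\Gamma_0]\leq 2$, the subgroup $\Gamma_0$ is normal in $\Gamma$; and $\Gamma$ is polycyclic-by-finite, so every finite-index subgroup is closed in the profinite topology and $\wh{\Gamma_0}$ is the open normal subgroup $\ker(\wh{\Gamma}\to\Gamma/\Gamma_0)$ of $\wh{\Gamma}$, of index $\leq 2$. Moreover $\Z^2$ is normal in $\Gamma_0$ with quotient $\Z$, hence separable, so profinite completion is exact on $1\to\Z^2\to\Gamma_0\to\Z\to 1$ and the splitting survives: $\wh{\Gamma_0}\cong\wh{\Z}^2\rtimes_A\wh{\Z}$, where $\wh{\Z}\cong\prod_q\Z_q$ acts on $\wh{\Z}^2\cong\prod_q\Z_q^2$ through the unique continuous homomorphism $\wh{\Z}\to GL_2(\wh{\Z})=\prod_q GL_2(\Z_q)$ sending $1$ to $A$. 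Consequently, for odd $p$ the index $[\wh{\Gamma}:\wh{\Gamma_0}]$ is prime to $p$, so a Sylow pro-$p$ subgroup of $\wh{\Gamma}$ is a Sylow pro-$p$ subgroup of $\wh{\Gamma_0}$; and for $p=2$ a Sylow pro-$2$ subgroup of $\wh{\Gamma}$ meets $\wh{\Gamma_0}$ in a Sylow pro-$2$ subgroup of $\wh{\Gamma_0}$, of index dividing $[\wh{\Gamma}:\wh{\Gamma_0}]\leq 2$. So it suffices to compute a Sylow pro-$p$ subgroup $P$ of $\wh{\Gamma_0}=\wh{\Z}^2\rtimes_A\wh{\Z}$.

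Next I would locate $P$. Because the action of $\wh{\Z}$ on $\wh{\Z}^2$ respects the decomposition into the factors $\Z_q^2$, the $p$-primary component $\Z_p\leq\wh{\Z}$ normalizes the $p$-primary component $\Z_p^2\leq\wh{\Z}^2$, acting on it through $A_p\in GL_2(\Z_p)$, a topological generator of the pro-$p$ part of the procyclic closure $\overline{\langle A\rangle}\leq GL_2(\Z_p)$. Hence $P:=\Z_p^2\rtimes_{A_p}\Z_p$ is a closed pro-$p$ subgroup of $\wh{\Gamma_0}$. Its index is prime to $p$: the preimage $\wh{\Z}^2\rtimes\Z_p$ of $\Z_p\leq\wh{\Z}$ has index $[\wh{\Z}:\Z_p]=\prod_{q\neq p}q^\infty$, and inside it $\Z_p^2$ is normal with quotient $(\prod_{q\neq p}\Z_q^2)\rtimes\Z_p$, onto whose top factor $P$ maps isomorphically; so $[\wh{\Gamma_0}:P]=\prod_{q\neq p}q^\infty$, which is coprime to $p$. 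Thus $P$ is a Sylow pro-$p$ subgroup and $P\cong\Z_p^2\rtimes_{A_p}\Z_p$.

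It then remains to identify $\Z_p^2\rtimes_{A_p}\Z_p$ with the pro-$p$ completion of some semidirect product $\Z^2\rtimes\Z$, and this is where the only real work lies. For Nil manifolds $A$ is unipotent, $A_p=A$, and $\Z_p^2\rtimes_A\Z_p$ is visibly the pro-$p$ completion of the finitely generated torsion-free nilpotent group $\Gamma_0=\Z^2\rtimes_A\Z$. For Sol manifolds $A$ is Anosov; here the subtlety is that the order of $A$ in $GL_2(\F_p)$ may be $m\cdot p^e$ with $m>1$ prime to $p$, so that the naive expression $\Z_p^2\rtimes_A\Z_p$ is not even pro-$p$. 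I would handle this by setting $B:=A^m\in GL_2(\Z)$, whose reduction mod $p$ has $p$-power order, hence is unipotent; then check (i) that $A_p$ and the image of $B$ in $GL_2(\Z_p)$ generate the same (now pro-$p$, procyclic, infinite) subgroup $\cong\Z_p$, so that $\Z_p^2\rtimes_{A_p}\Z_p\cong\Z_p^2\rtimes_B\Z_p$ after twisting the action by a unit of $\Z_p$; and (ii) that $\Z^2\rtimes_B\Z$ is residually-$p$ with pro-$p$ completion $\Z_p^2\rtimes_B\Z_p$ --- which follows because, $B$ being unipotent mod $p$, the subgroups $p^k\Z^2\rtimes p^{N_k}\Z$ (with $N_k$ chosen so that $B^{p^{N_k}}\equiv I\pmod{p^k}$) form a cofinal family of normal subgroups of $p$-power index with trivial intersection. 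Combining this identification with the reductions above yields the theorem: for odd $p$ the Sylow pro-$p$ subgroup of $\wh{\pi_1 N}$ is $\Z_p^2\rtimes_B\Z_p=\wh{(\Z^2\rtimes_B\Z)}$, and for $p=2$ it contains such a subgroup with index at most $2$. The main obstacle is thus the arithmetic in steps (i)--(ii): untangling the prime-to-$p$ torsion in the reduction of $A$ modulo $p$ and pinning down the exact pro-$p$ completion of the resulting integral semidirect product; everything else is routine Sylow theory for profinite groups together with the standard behaviour of profinite completions of polycyclic groups.
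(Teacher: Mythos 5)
Your proposal is correct and follows essentially the same route as the paper: reduce to the index-$\leq 2$ subgroup $\Z^2\rtimes_A\Z$, kill the prime-to-$p$ part of the monodromy by passing to a finite-index subgroup (your $B=A^m$ plays the role of the paper's pullback of a Sylow pro-$p$ subgroup of the image of the monodromy in $GL_2(\Z_p)$), and identify the resulting Sylow pro-$p$ subgroup with the pro-$p$ completion of $\Z^2\rtimes_B\Z$ via the observation that the kernel of the map onto $\Z_p^2\rtimes_B\Z_p$ meets $\Z_p^2$ trivially and hence is trivial. No gaps.
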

\begin{proof}
Let $H=\pi_1N_0$ as above. Denote by $O_p(G)$ the kernel of the epimorphism of a profinite group $G$ to its maximal pro-$p$ quotient.   Let $f: \widehat H\longrightarrow \widehat H/O_p(\wh{\Z}^2)$ be  the canonical map. Clearly $f$ restricts to an injection on $H$, so we identify $H$ with $f(H)$.  The image $f(\widehat \Z^2)$ is $\Z_p^2$ on which $\widehat H/O_p(\widehat H)$ acts by conjugation. Thus we have a homomorphism $\eta:\widehat H/O_p(\widehat H) \longrightarrow GL_2(\Z_p)$. Note that $GL_2(\Z_p)$ is virtually pro-$p$.   Hence the preimage $U$ of a $p$-Sylow subgroup of $\eta(H)$ is open in $H/O_p(H)$. Consider the intersection $K=U\cap \pi_1N$, a finite-index subgroup of $\pi_1N$.  The pro-$p$ completion $K_{\hat p}$ has a normal subgroup $\Z_p^2$ which is the closure of $\Z^2$ and the kernel of the natural map $K_{\hat p}\longrightarrow U$ intersects it trivially. One deduces then that this kernel is trivial, and the result follows.
\end{proof}

\subsection{The Euclidean case}

This case  consists of \emph{all} 3d Bieberbach groups -- i.e. torsion-free $\mathbb{Z}^3$-by-finite groups with the finite groups  of  the following 32 types \cite[Chapter 10.1, p.\ 794]{hahn_international_1987}: 27 cyclic and dihedral groups, 3  tetrahedral groups  $T, T_d, T_h$ and 2 octahedral groups $O$ and $O_h$. The orders of the these groups are divisors of $48$, so only the primes $p=2,3$ can lead to non-trivial Sylow subgroups. For $p=3$, all Sylow subgroups are $C_3$. See also  \cite[Lemma 4.9]{evans_solvable_1972}.

\medskip
Suppose $p=2$. Then we have the following list.

\begin{enumerate}

\item[1.] Cyclic groups: $C_2$, $C_4$, $C_8$.

\item[2.] Dihedral groups: $D_2$, $D_4$, $D_8$ of orders $4,8,16$ respectively.

 \item[3.] Tetrahedral groups $T, T_d, T_h$ have orders $12,24,24$ and their Sylow $2$-subgroups are $D_4$ and $D_8$, so we do not get anything new from the previous item.

\item[4.]  The two octohedral groups $O\cong S_4$ and $O_h$ of order $24$ and $48$ respectively. The Sylow $2$-subgroup of the first one is $D_4$, and of the second is the generalized quaternion group $Q_{16}$.

\end{enumerate}

 Thus we can deduce the following:

\begin{proposition} The Sylow $p$-subgroup of the profinite completion of the fundamental group of a Euclidean 3-manifold is a  pro-$p$ group  of one of the following types:
\begin{enumerate}
\item[(i)] $\Z_p^3$ for any $p$.
\item[(ii)] if $p=3$, $\Z_3^3$-by-$C_3$.
\item[(iii)]$p=2$, $\Z_2^3$-by-$C_2$, -$C_4$, -$C_8$, -$D_2$, -$D_4$, -$D_8$, -$Q_{16}$.
\end{enumerate}
\end{proposition}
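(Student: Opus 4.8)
The plan is to transport the computation of the Sylow subgroups of the holonomy group (carried out just above the Proposition) through the defining extension of the Bieberbach group $\pi_1M$, using nothing beyond the exactness of the profinite completion and the standard behaviour of Sylow subgroups in an extension with finite quotient.

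First I would set up the extension. Write $\Gamma=\pi_1M$. Being the fundamental group of a Euclidean $3$-manifold, $\Gamma$ is a Bieberbach group, so it fits into an exact sequence $1\to\Z^3\to\Gamma\to Q\to1$ in which $Q$ is one of the $32$ crystallographic point groups and $\Z^3$ has finite index. Since $\Z^3$ has finite index, every finite-index subgroup of $\Z^3$ has finite index in $\Gamma$, so the profinite topology of $\Gamma$ restricts to the full profinite topology on $\Z^3$; hence the closure of $\Z^3$ in $\wh\Gamma$ is its profinite completion $\wh\Z^3=\prod_\ell\Z_\ell^3$, and (as $Q$ is finite, the map $\Gamma\to Q$ extends to $\wh\Gamma$ with kernel this closure) the completion sequence $1\to\wh\Z^3\to\wh\Gamma\to Q\to1$ is exact.

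Next I would run the Sylow argument. Fix a Sylow pro-$p$ subgroup $P$ of $\wh\Gamma$. Since $\wh\Z^3$ is open and normal in $\wh\Gamma$, the intersection $P\cap\wh\Z^3$ is a Sylow pro-$p$ subgroup of $\wh\Z^3$ and the image $P\wh\Z^3/\wh\Z^3$ is a Sylow $p$-subgroup $Q_p$ of $Q$ (both standard, see e.g.\ \cite[Ch.~2]{ribes_profinite_2010}). As $\wh\Z^3=\prod_\ell\Z_\ell^3$ is abelian it is the direct product of its Sylow subgroups, so its unique Sylow pro-$p$ subgroup is $\Z_p^3$; thus $P\cap\wh\Z^3=\Z_p^3$ and $P$ sits in an extension \[1\longrightarrow\Z_p^3\longrightarrow P\longrightarrow Q_p\longrightarrow1,\] with $Q_p$ a Sylow $p$-subgroup of $Q$.

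Finally I would feed in the enumeration above. The order of $Q$ divides $48$, so: for $p\geq5$ we get $Q_p=1$ and $P\cong\Z_p^3$, giving (i); for $p=3$ every point group has Sylow $3$-subgroup trivial or $C_3$, giving (i)/(ii); and for $p=2$ the case-by-case list above shows $Q_2$ is trivial or isomorphic to one of $C_2,C_4,C_8,D_2,D_4,D_8,Q_{16}$, giving (i)/(iii). In each case $P$ is $\Z_p^3$-by-$Q_p$ with $Q_p$ on the stated list, which is exactly the conclusion. I do not foresee a real obstacle: the only things requiring care are the exactness of the completion sequence and the behaviour of Sylow subgroups under passage to open normal subgroups and quotients, both routine, while the substantive content — identifying the Sylow subgroups of the $32$ point groups — is the computation already performed. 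If one wants the sharper form matching Theorem \ref{main}(iii), one adds the observation that $\Gamma$, being torsion-free polycyclic, has finite cohomological dimension and is good in the sense of Serre, so $\wh\Gamma$ has finite cohomological $p$-dimension for every $p$ and is therefore torsion-free; hence each $P$ above is in fact a torsion-free extension of $\Z_p^3$ by the relevant finite $p$-group.
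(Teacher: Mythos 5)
Your proposal is correct and follows essentially the same route as the paper: the paper's "proof" is exactly the enumeration of Sylow $p$-subgroups of the $32$ point groups, with the extension $1\to\Z_p^3\to P\to Q_p\to 1$ (obtained from the completion of the Bieberbach sequence and standard profinite Sylow theory) left implicit in the phrase "thus we can deduce". Your write-up merely makes that implicit step explicit, and your closing remark on goodness/torsion-freeness is a correct sharpening that the paper also needs (but does not spell out here) for the "torsion-free extension" phrasing in Theorem \ref{main}.
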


 \begin{remark}
 Each extension may yield several isomorphism classes, so this does not give the number of groups.
 \end{remark}

There are the cases of $\Z$ and $D_\infty$.  We  do not get anything new, just  the pro-$p$ and pro-$2$ completions respectively.

\subsection{Interval bundles over Klein bottles}

The JSJ decomposition of a non-geometric 3-manifold $M$ may include pieces $N$ which are homeomorphic to twisted interval bundles over the Klein bottle, with fundamental group the Klein-bottle group $\Z\rtimes\Z$. These can be removed by passing to a double cover, but when $p=2$, the pro-2 completion of $\Z\rtimes\Z$ arises as the Sylow 2-subgroup of $\widehat{\Z\rtimes\Z}$.  We record this observation for later use.

\begin{proposition}\label{p:Klein bottle}
The Sylow $2$-subgroup of the profinite completion of the fundamental group of the twisted interval bundle over the Klein bottle is the pro-2 completion of $\Z\rtimes\Z$.
\end{proposition}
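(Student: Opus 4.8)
The plan is to compute $\widehat{\pi_1N}$ explicitly and then exhibit a Sylow $2$-subgroup by hand. First I would record that the twisted interval bundle $N$ deformation retracts onto its zero section, a Klein bottle, so $\pi_1N\cong K:=\langle a,b\mid bab^{-1}=a^{-1}\rangle$; here $\langle a\rangle$ is a normal infinite cyclic subgroup, $K/\langle a\rangle\cong\Z=\langle\bar b\rangle$, and the extension $1\to\Z\to K\to\Z\to1$ splits. Since $K$ is polycyclic, profinite completion is exact on this sequence (the profinite topology of $K$ restricts to the full profinite topology on $\langle a\rangle$) and the splitting survives, so $\widehat K=\widehat{\Z}\rtimes\widehat{\Z}$. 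The key observation to isolate is that the action of the quotient $\widehat{\Z}=\langle\widehat b\rangle$ on the normal $\widehat{\Z}=\langle\widehat a\rangle$ is the continuous extension of $a\mapsto a^{-1}$; since inversion is an involution, this action factors through the reduction $\widehat{\Z}\twoheadrightarrow\widehat{\Z}/2\widehat{\Z}\cong\Z/2$, which then acts on $\widehat{\Z}$ by $x\mapsto -x$.

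Next I would split off the $2$-primary parts. Write $\widehat{\Z}=\Z_2\times T$, where $T=\prod_{q\neq2}\Z_q$ is the Hall $2'$-part; since inversion preserves each factor, the closed subgroup
\[
P:=\Z_2\rtimes\Z_2\ \leq\ \widehat{\Z}\rtimes\widehat{\Z}=\widehat K,
\]
formed from the $2$-primary parts of the normal and quotient copies of $\widehat{\Z}$ with the restricted (inversion) action, is pro-$2$. Since $P$ picks out the full $2$-primary part of both the normal and the quotient copy of $\widehat{\Z}$, its index is $[\widehat K:P]=|T|^{2}$, a supernatural number coprime to $2$; hence $P$ is a Sylow $2$-subgroup of $\widehat K$. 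As all Sylow $2$-subgroups of a profinite group are conjugate, this pins down its isomorphism type.

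Finally I would identify $P$ with the pro-$2$ completion of $K$: the maximal pro-$2$ quotient of $\widehat K=\widehat{\Z}\rtimes\widehat{\Z}$ is $\Z_2\rtimes\Z_2$ with the inversion action, because reducing each $\widehat{\Z}$ modulo $T$ is compatible with the action (which already factors through $\Z/2$), and the composite $P\hookrightarrow\widehat K\twoheadrightarrow K_{\hat2}$ restricts to an isomorphism on each factor, hence is an isomorphism; this $\Z_2\rtimes\Z_2$ with the (nontrivial) inversion action is by definition the pro-$2$ completion of the Klein-bottle group $\Z\rtimes\Z$. I do not anticipate a serious obstacle: the only steps requiring care are exactness of profinite completion on $1\to\Z\to K\to\Z\to1$ (which rests on $K$ being polycyclic) and keeping the direction of the two semidirect products straight, after which the supernatural-number arithmetic is routine.
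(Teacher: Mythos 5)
Your argument is correct and complete: the paper merely records this proposition as an observation without proof, and your computation ($\widehat K\cong\widehat{\Z}\rtimes\widehat{\Z}$ with the inversion action via exactness of completion for the polycyclic Klein-bottle group, then extraction of the $2$-primary part as a pro-$2$ subgroup of index coprime to $2$) is exactly the verification the authors leave implicit. The only point deserving the care you already flag is that the profinite topology of $K$ induces the full profinite topology on $\langle a\rangle$, which here can be seen directly from the dihedral quotients $K\twoheadrightarrow(\Z/n)\rtimes(\Z/2)$.
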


\subsection{Spherical manifolds}


We shall use here \cite[Theorem 2.4 ]{wolf_spaces_2011}.

\begin{theorem} (Theorem 2.4 \cite{wolf_spaces_2011} ) Finite fixed-point free subgroup of SO(4) belongs to the following list:

\begin{enumerate}

\item[1.]  Cyclic group $C_n$ of order $n$.

\item[2.] Dihedral groups $D_{2^k(2n+1)}$, for $k\geq 2$ and $n\geq 1$, with presentation
$\langle x,y\mid x^{2^k}=y^{2n+1}=1,xyx^{-1}=y^{-1}\rangle $ of order $2^k(2n+1)$.

\item[3.] Generalised quaternion groups $Q_{4k}$ of order $4k$.


\item[4.] The binary icosahedral group $I^∗=SL_2(\F_5)$ of order $120$.

\item[5.] The groups $P'_{8\cdot 3^k}=Q_8\rtimes C_{3^k}$  defined by the following presentation
$\langle x,y,z|x^2=(xy)^2=y^2,zxz^{-1}=y,zyz^{-1}=xy,z^{3^k}=1\rangle$ of order $8\cdot 3^k$.

\item[6.]  The direct product of any of the above group with a cyclic group of relatively prime order.
\end{enumerate}

\end{theorem}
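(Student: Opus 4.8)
The plan is to reduce the problem to the classification of finite subgroups of $SU(2)$ by means of the two-to-one covering $\rho\colon S^3\times S^3\to SO(4)$, in which $S^3$ denotes the group of unit quaternions, $\R^4$ is identified with $\mathbb{H}$, and $\rho(q_1,q_2)$ is the rotation $x\mapsto q_1 x\bar q_2$ (so $\bar q_2=q_2^{-1}$); its kernel is $\{\pm(1,1)\}$. Given a finite fixed-point-free $G\le SO(4)$, I would first lift it to a finite subgroup $\widetilde G\le S^3\times S^3$ with $\rho(\widetilde G)=G$, and record the elementary fact that $\rho(q_1,q_2)$ has a nonzero fixed vector if and only if $q_1 x=x q_2$ for some $x\in\mathbb{H}^\times$, i.e.\ if and only if $q_1$ and $q_2$ are conjugate in $S^3$, equivalently have equal real part. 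Hence $G$ is fixed-point free precisely when no nonidentity element $(q_1,q_2)$ of $\widetilde G$ has $q_1$ conjugate to $q_2$. The images $L$ and $R$ of $\widetilde G$ under the two coordinate projections are finite subgroups of $S^3\cong SU(2)$, so by the classical $ADE$ list each of $L$, $R$ is cyclic, binary dihedral (some $Q_{4k}$), binary tetrahedral, binary octahedral, or binary icosahedral.

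Next I would analyse how $\widetilde G$ sits inside $L\times R$ via Goursat's lemma: setting $K_R=\widetilde G\cap(L\times 1)$ and $K_L=\widetilde G\cap(1\times R)$, one has $L/K_R\cong R/K_L\cong Q$ for a common quotient $Q$, and $\widetilde G$ is the fibre product of $L$ and $R$ over $Q$. Testing fixed-point-freeness on the elements $(k,1)$ with $k\in K_R$ shows that every nonidentity element of $K_R$ has the same real part as $1$, which in a finite subgroup of $S^3$ forces $K_R\subseteq\{\pm1\}$, and symmetrically $K_L\subseteq\{\pm1\}$; thus $L$ and $R$ each surject onto $Q$ with kernel of order at most $2$. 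A further short argument shows that at least one of $L$, $R$ must be cyclic: if neither were, a comparison of the conjugacy-class data of $L$ and $R$ through $Q$ produces an element of $Q$ of order exceeding $2$ whose preimages $q_1\in L$, $q_2\in R$ can be matched in real part, giving a fixed vector.

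With, say, $R$ cyclic, the proof reduces to a finite case analysis over the remaining possibilities for $L$ and for the Goursat identification. In each case I would read off the abstract isomorphism type of $G=\rho(\widetilde G)$ and verify fixed-point-freeness directly on conjugacy classes. The cyclic-$L$ configurations yield the cyclic groups $C_n$, the metacyclic groups $D_{2^k(2n+1)}=\langle x,y\mid x^{2^k}=y^{2n+1}=1,\ xyx^{-1}=y^{-1}\rangle$, and the generalized quaternion groups $Q_{4k}$; the non-metacyclic configurations yield the remaining finitely many types, in particular the groups $P'_{8\cdot 3^k}=Q_8\rtimes C_{3^k}$ and the binary icosahedral group $I^*=SL_2(\F_5)$ (for the latter one uses additionally that its only cyclic quotient is trivial, so $Q$, and hence the non-central part of $R$, vanishes). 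Finally, a cyclic coordinate of order coprime to $|L|$ intersects $L$ trivially and centralizes it, and this is precisely the source of the direct products with cyclic groups of relatively prime order in item (6).

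The step I expect to be the main obstacle is this last case analysis: translating each admissible Goursat datum $(L,R,Q,\text{identification})$ into a presentation and checking fixed-point-freeness element-by-element is where essentially all the content lies. Extra care is needed with the prime $2$ — a fixed-point-free finite group has all of its abelian subgroups cyclic, so its Sylow $2$-subgroup is cyclic or generalized quaternion, and this is what pins down the $2$-power exponent $2^k$ in the $D_{2^k(2n+1)}$ family and excludes dihedral $2$-groups — and with stating the list irredundantly modulo the coprime cyclic direct factors of item (6).
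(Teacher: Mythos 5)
The paper does not prove this statement---it is quoted from Wolf's book as a black box---so there is no argument of the authors to compare yours with; I can only assess your sketch on its own terms. Your overall strategy (the double cover $S^3\times S^3\to SO(4)$, the criterion that $\rho(q_1,q_2)$ has a nonzero fixed vector if and only if $q_1$ and $q_2$ are conjugate in $S^3$, the ADE list for the two coordinate projections, and Goursat's lemma) is the standard route to this classification and is correct in outline.

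However, the Goursat step contains a genuine error. You claim that fixed-point-freeness applied to the elements $(k,1)$, $k\in K_R=\widetilde G\cap(S^3\times 1)$, forces $K_R\subseteq\{\pm1\}$. This is backwards: $\rho(k,1)$ is left multiplication $x\mapsto kx$, which is fixed-point free for every $k\neq 1$; your own criterion demands $\mathrm{Re}(k)\neq\mathrm{Re}(1)=1$, i.e.\ $k\neq 1$, which is automatic. So fixed-point-freeness places no restriction whatsoever on $K_R$ (or on $K_L$) individually. Indeed $\widetilde G=I^*\times 1$ is a fixed-point-free configuration with $K_R=I^*$---and this is exactly how the binary icosahedral group of item 4 arises, so your own treatment of $I^*$ (where $Q=1$ and $\widetilde G=L\times R$) contradicts the claim $K_R\subseteq\{\pm1\}$. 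Consequently the assertion that $L$ and $R$ surject onto $Q$ with kernels of order at most $2$ is false, and the ensuing case analysis is not set up correctly. The genuine constraint is on mixed elements: no $(q_1,q_2)\in\widetilde G$ with $q_1,q_2\neq\pm1$ may satisfy $\mathrm{Re}(q_1)=\mathrm{Re}(q_2)$; for instance, $K_R$ and $K_L$ cannot both contain elements of a common order greater than $2$. The crux of the whole classification is the lemma that, after possibly swapping the two factors, the projection $R$ is cyclic; you assert this, but your one-sentence justification is too vague to check and rests on the false premise above. Finally, note that a correctly executed case analysis would also produce the binary octahedral group $O^*$ of order $48$ (left multiplication by $O^*\leq S^3$ is free), which is not isomorphic to any group in items 1--6; that is a defect of the quoted list itself rather than of your argument, but it underlines that essentially all of the content of the theorem lives in the case analysis you have deferred.
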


\medskip
We shall list $p$-subgroups of all these groups. Since Case 6 does not produce anything new we analyse cases 1--5 only.

\medskip
Looking at the orders we see that for $p$ odd $p$-Sylow subgroups are cyclic. So we concentrate on the case $p=2$. Then we have only $3$ types of $2$-groups: cyclic, dihedrals $D_{2^k}$, generalized quaternions $Q_{ 2^n}$. Thus we deduce   the following

\begin{theorem} Let $G$ be the profinite completion of the fundamental group of a spherical $3$-manifold.

\begin{enumerate}

\item[(a)] If $p$ is odd, its Sylow $p$-subgroups are cyclic.

\item[(b)] If $p=2$, we have the following list of possible Sylow 2-subgroups:

\begin{enumerate}
\item[1.] cyclic;

\item[2.]  dihedrals $D_{2^k}$;

\item[3.]  generalized quaternions $Q_{2^n}$.

\end{enumerate}
\end{enumerate}
\end{theorem}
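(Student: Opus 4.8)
The plan is to use that a spherical $3$-manifold is $S^3/\Gamma$ for a finite subgroup $\Gamma\leq SO(4)$ acting freely, so that $\pi_1M$ is finite and $G=\widehat{\pi_1M}$ coincides with $\pi_1M$; hence the Sylow $p$-subgroups of $G$ are literally the ordinary Sylow $p$-subgroups of this finite group, and it suffices to read off the Sylow $p$-subgroups of each group appearing in the classification quoted above (Wolf's Theorem~2.4). Forming a direct product with a cyclic group of coprime order enlarges no Sylow subgroup, so only Cases 1--5 need to be inspected, and for $p$ odd the conclusion (a) is then immediate: in $C_n$ every Sylow subgroup is cyclic; in $D_{2^k(2n+1)}$, respectively in $Q_{4k}$, the odd part of the order is carried by the normal cyclic subgroup $\langle y\rangle$, respectively $\langle a\rangle$, so a Sylow $p$-subgroup lies inside a cyclic group; in $I^*=SL_2(\F_5)$ of order $2^3\cdot 3\cdot 5$ one obtains $C_3$ or $C_5$; and in $P'_{8\cdot 3^k}=Q_8\rtimes C_{3^k}$ one obtains $C_{3^k}$.

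For $p=2$ I would go through Cases 1--5 in turn. Case 1 is clear. In Case 2 the subgroup $\langle x\rangle\cong C_{2^k}$ meets the normal odd-order subgroup $\langle y\rangle$ trivially, hence has order exactly the $2$-part of $|G|$ and is a Sylow $2$-subgroup; so here the Sylow $2$-subgroup is cyclic. In Case 3, writing $k=2^cm$ with $m$ odd, the subgroup $\langle a^m,b\rangle$ has order $2^{c+2}$, equal to the $2$-part of $|Q_{4k}|$, and is a generalized quaternion group $Q_{2^{c+2}}$ (which degenerates to $C_4$ when $c=0$). In Case 4, $SL_2(\F_5)$ has the central element $-I$ as its unique element of order $2$, and no element of order $8$, so its Sylow $2$-subgroup, of order $8$, is $Q_8$. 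In Case 5 the normal subgroup $\langle x,y\rangle\cong Q_8$ is itself a Sylow $2$-subgroup. Thus in every case the Sylow $2$-subgroup is cyclic or generalized quaternion, hence among the groups listed in (b), which establishes the theorem.

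Since the entire argument reduces to elementary facts about finite groups whose order divides a fixed small bound, I do not expect a genuine obstacle; the only step needing a little care is the identification of the Sylow $2$-subgroups in Cases 3 and 4, for which one invokes the classical fact that a finite $2$-group with a unique involution is cyclic or generalized quaternion. Conceptually this is no accident: Milnor's theorem already forces a group acting freely on a sphere to contain at most one element of order $2$, which is precisely why the groups of Case 2 have cyclic, rather than dihedral, $2$-Sylow subgroups.
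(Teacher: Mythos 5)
Your proof is correct and follows the same basic route as the paper: since $\pi_1M$ is finite, $\widehat{\pi_1M}=\pi_1M$, and one simply reads off the Sylow subgroups of the groups in Wolf's classification case by case (the paper's own argument is a one-line version of exactly this, ``looking at the orders\dots''). The one substantive difference is that your more careful analysis is \emph{sharper} than the paper's statement: the groups $D_{2^k(2n+1)}$ in Wolf's list have $n\geq 1$, so the odd part $2n+1\geq 3$ is carried entirely by the normal cyclic subgroup $\langle y\rangle$, and $\langle x\rangle\cong C_{2^k}$ is a cyclic Sylow $2$-subgroup; consequently dihedral $2$-groups never actually arise, and item~2 of the theorem's list is vacuous. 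This is consistent with the general principle you invoke: a dihedral $2$-group of order at least $4$ contains $C_2\times C_2$, which violates the $p^2$-condition for finite groups acting freely on a sphere. So your argument proves the theorem as stated (the list remains a valid upper bound) and in fact shows it can be trimmed to cyclic and generalized quaternion groups only. Your identifications in Cases 3--5 (the Sylow $2$-subgroup $\langle a^m,b\rangle\cong Q_{2^{c+2}}$, degenerating to $C_4$ when $c=0$, in $Q_{4k}$; and $Q_8$ in both $SL_2(\F_5)$ and $Q_8\rtimes C_{3^k}$) are all correct, as is the odd-$p$ analysis.
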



\bigskip

Putting the above results together we obtain Theorem \ref{main}.

\bibliographystyle{alpha}

\end{document}